\newcommand{\R}{\mathbf{R}}
\newcommand{{\ba}}{\bf a}
\newcommand{\ve}{\varepsilon}
\newcommand{\la}{\lambda}
\newcommand{\La}{\Lambda}
\newcommand{\ga}{\gamma}
\newcommand{\Ga}{\Gamma}
\newcommand{\pa}{\partial}
\newcommand{\ra}{\rightarrow}
\newcommand{\del}{\delta}
\newcommand{\cd}{\cdot}
\newcommand{\al}{\alpha}
\newcommand{\be}{\begin{equation}}
\newcommand{\ee}{\end{equation}}
\newcommand{\nn}{\nonumber}
\newtheorem{lem}{Lemma}{\bf}{\it}
\newtheorem{rem}{Remark}{\it}{\rm}
\newtheorem{ex}{Example}{\it}{\rm}
\newtheorem{proposition}{Proposition}
\newtheorem{corollary}{Corollary}
\numberwithin{theorem}{section}
\numberwithin{lem}{section}
\numberwithin{equation}{section}
\numberwithin{proposition}{section}
\numberwithin{corollary}{section}
\title[LSW coarsening rates]{Bounds on coarsening rates for the Lifschitz-Slyozov-Wagner Equation }
\author{Joseph G. Conlon}
\address{University of Michigan\\ Department of Mathematics\\ Ann Arbor,
  MI 48109-1109}
\email{conlon@umich.edu}
\keywords{nonlinear pde, coarsening}
\subjclass{35F05,  82C70, 82C26}
\begin{document}

\maketitle

\begin{abstract}
This paper is concerned with  the large time behavior of solutions to the Lifschitz-Slyozov-Wagner (LSW) system of  equations. Point-wise in time upper and lower bounds on the rate of coarsening are obtained  for solutions with fairly general initial data. These bounds complement the time averaged upper bounds obtained by Dai and Pego, and the point-wise in time upper and lower bounds obtained by Niethammer and Velasquez for solutions with initial data close to a self-similar solution.

\end{abstract}

\section{Introduction.}
In this paper we shall be concerned with the large time behavior of solutions to the  Lifschitz-Slyozov-Wagner (LSW) equations \cite{ls,w}. The LSW equations occur in a variety of contexts \cite{pego, penrose} as a mean field approximation for the evolution of particle clusters of various volumes. Clusters of volume $x>0$ have density $c(x,t)\ge 0$ at time $t>0$. The density evolves according to a linear law, subject to the linear mass conservation constraint as follows:
\begin{eqnarray}
\frac{\pa c(x,t)}{\pa t}&=& \frac{\pa}{\pa x}\left[1-\left(\frac{x}{L(t)}\right)^{1/3}\right]c(x,t),  \quad x>0, \label{A1}\\
\int_0^\infty x c(x,t) dx&=& 1. \label{B1}
\end{eqnarray}
One wishes then to solve (\ref{A1}) for $t>0$ and initial condition $c(x,0) = c_0(x) \ge 0, \ x > 0$, subject to the constraint (\ref{B1}).  The parameter $L(t) > 0$ in (\ref{A1}) is determined by the constraint (\ref{B1}) and is therefore given by the formula,
\be \label{C1}
L(t)^{1/3} = \int^\infty_0 \ x^{1/3} c(x,t)dx \Big/ \int^\infty_0 c(x,t) dx.
\ee
Evidently then $L(t)$ is a measure of the typical cluster volume at time $t$ and the time evolution of the LSW system is in fact non-linear.  Existence and uniqueness of solutions to (\ref{A1}), (\ref{B1}) with given initial data $c_0(x)$ satisfying the constraint has been proven in \cite{cg} for integrable functions $c_0(\cdot)$, and in \cite{np2} for initial data such that $c_0(x)dx$ is an arbitrary Borel probability measure with compact support. In \cite{np3}   the methods of \cite{np2} are further developed to prove existence and uniqueness for initial data such that $c_0(x)dx$ is a Borel probability measure with finite first moment.

The importance of the LSW system lies in the fact that it is one of the simplest systems which is expected to exhibit the phenomenon of {\it coarsening}. Specifically, beginning with rather arbitrary initial data satisfying the constraint (\ref{B1}), one expects the typical cluster volume to increase linearly in time.  This is a consequence of the dilation invariance of the system. That is if the function $c(x,t), \ x,t>0$, is a solution of (\ref{A1}), (\ref{B1}), then for any parameter $\lambda>0$ so also is the function $\lambda^2 c(\lambda x,\lambda t)$. Letting $\Lambda(t)$ be the mean cluster volume at time $t$,
\be \label{I1}
\Lambda(t)=\int_0^\infty xc(x,t)dx \Big/\int_0^\infty c(x,t) dx, \quad t\ge 0,
\ee
one expects  $\Lambda(t)\sim C t$ at large $t$ for some constant $C>0$. The problem of proving that typical cluster volume  increases linearly in time is quite subtle since it is easy to see that the constant $C$ depends on detailed properties of the initial data. In fact if the initial data is a Dirac delta measure then  $C=0$. Less trivially one can construct a  family of self-similar solutions \cite{np1} to (\ref{A1}), (\ref{B1}) depending on a parameter  $\beta$, which may take any value in the interval $0<\beta\le 1$. In that case $\Lambda(t)\sim C(\beta) t$ at large $t$, where $0<C(\beta)<\beta$.

All self-similar solutions to the LSW system have compact support. For a given $\beta, \ 0<\beta\le 1$, the self similar solution has the form,
\be \label{Q1}
c(x,t)=\frac{1}{[1+k(\beta)t]^2} f_\beta\left(\frac{x}{[1+k(\beta)t]}\right) \ .
\ee
Let  the support of $f_\beta(\cdot)$ be the interval $[0,a(\beta)]$. Then as $x\ra a(\beta)$ one has
\begin{eqnarray} \label{Z1}
f_\beta(x) &\sim& K[a(\beta)-x]^{\beta/(1-\beta)-1},  \  \quad  \beta<1,\\
f_\beta(x) &\sim& K\exp\{-1/[a(\beta)-x]\}, \quad  \beta=1. \nn
\end{eqnarray}
It has recently been shown \cite{nv1, nv2} that every self similar solution is a stationary point of an infinite dimensional dynamical system, and that the stationary point is locally asymptotically stable. It follows from this that if the initial data $c_0(\cdot)$ for (\ref{A1}) is sufficiently close to the self similar solution with parameter $\beta$, then $\Lambda(t)\sim C(\beta) t$ at large $t$ where $C(\beta)>0$ is the rate of increase of the average cluster volume of the self-similar solution. The definition of ``closeness" to the self similar solution is quite complicated. A key feature of it is that the initial data $c_0(\cdot)$ should have compact support, and have the same behavior close to the end of its  support as the corresponding self similar solution exhibits in (\ref{Z1}).

Instead of seeking to establish the exact rate of typical cluster volume increase, one can instead simply look for bounds on the typical cluster volume which are linear in time. An upper bound of this nature, which applies to rather general initial data for (\ref{A1}), was  proven by Dai and Pego \cite{dp}. An important ingredient in their proof is an argument of Kohn and Otto \cite{ko}, which has been applied to several systems for which coarsening occurs. The quantity which measures the typical cluster volume is in this case the energy $E(t)$ defined by
\be \label{P1}
 E(t)=\int_0^\infty x^{2/3} c(x,t) dx \ .
 \ee
In view of (\ref{B1}) the ratio $1/E(t)^3$ is a measure of the typical cluster volume. It is shown in \cite{dp} that 
\be \label{J1}
\left[\frac{1}{T}\int_0^T E(t)^2 dt \right]^{-3/2} \le C T, \quad T>1,
\ee 
where $C$ is a constant depending only on the initial data. Thus the Dai-Pego result gives an upper bound on coarsening in a {\it time averaged} sense. In this paper we shall establish bounds on coarsening which are {\it point-wise} in time. In particular we  show in $\S4$ that 
\be \label{K1}
\Lambda(T)\le CT, \quad 1/E(T)^3 \le CT, \quad T>1, 
\ee
where $C$ is a constant depending only on the initial data $c_0(\cdot)$ for (\ref{A1}). Evidently the second inequality of (\ref{K1}) implies (\ref{J1}).
The inequalities (\ref{K1}) apply to a wide range of initial data, even data which is slowly decreasing. An example of this is  $c_0(x)=K_\ve/(1+x)^{2+\ve}, \ \ve>0$,  for constant $K_\ve$ such that (\ref{B1}) holds.  

We also obtain point-wise in time lower bounds on coarsening which are linear in time. These are more difficult to establish than upper bounds because one needs to show that for initial data $c_0(\cdot)$ in some class, the solution $c(\cdot,t)$ of (\ref{A1}), (\ref{B1}) cannot collapse to a Dirac delta distribution for some large time. In $\S4, \ \S5$ we show that
\be \label{O1}
\Lambda(T) \ge CT, \quad T \ge 1,
\ee
for some positive constant $C$ depending only on the initial data $c_0(\cdot)$ for (\ref{A1}). The inequality (\ref{O1}) applies  also for a wide range of initial data,  in particular to the functions $c_0(x)=e^{-x}$ or $c_0(x)=K_\ve/(1+x)^{2+\ve}, \ \ve>0$,  for constant $K_\ve$ such that (\ref{B1}) is satisfied.   The range of initial data for which we are able to prove the lower bound (\ref{O1}) is however slightly smaller than the range of initial data for which we can prove the upper bound (\ref{K1}).  The main difference is that for initial data which decays polynomially at infinity, our proof for the lower bound (Proposition 5.2) requires that there be no oscillation at infinity in the rate of decay. 
At the end of $\S2$ we give an example (Example 1)  of initial data for which we have proved that (\ref{K1}) holds but not (\ref{O1}).

The study of solutions to (\ref{A1}), (\ref{B1}) generally proceeds \cite{np1} by considering the evolution of the function $w(x,t)$,
\be \label{L1}
	w(x,t) = \int^\infty_x \ c(x', t) dx', \ \ x > 0,
 \ee
rather than the evolution of the function $c(x,t)$. The reason for this is that the method of characteristics shows that the time evolution is given by $w(x,t)=w_0(F(x,t))$. The function $F(\cdot,t)$ depends only on the parameter values $L(s),  \ 0\le s\le t$, and $w_0$ is determined from initial data by
\be \label{M1}
w_0(x)=\int_x^\infty c_0(x') dx', \quad x\ge 0. 
\ee
Hence $w_0$ is a continuous nonnegative decreasing function converging to $0$ as $x\ra \infty$, and which may in fact have compact support. The condition (\ref{B1}) at $t=0$ further implies that $w_0$ is also integrable on $(0,\infty)$. 

To obtain an expression for the function $F(\cdot,t)$ one writes the LSW equations (\ref{A1}), (\ref{B1}) using the function (\ref{L1}) as 
\be \label{A3}
\frac {\pa w(x,t)}{\pa t} = \left[ 1 - \left\{ \frac x{L(t)}\right\}^{1/3} \right] \frac{\pa w}{\pa x} (x,t), \ \ x > 0, \ \ t > 0,
\ee

\be \label{B3}
\int^\infty_0 w(x,t)dx = 1.
\ee
The formula (\ref{C1}) for the parameter $L(t)$ now becomes
\be \label{C3}
L(t)^{1/3} = \frac 1 3 \; \int^\infty_0 x^{-2/3} w(x,t) dx \big/ w(0,t).
\ee
Observe from (\ref{C3}) that since $w(\cd,t)$ is nonnegative decreasing the interval $[0, \; L(t)]$ is strictly contained in the support of the function $w(\cd, t)$.

The standard approach to solving (\ref{A3}), (\ref{B3}) is to use the method of characteristics \cite{np1}.  Thus assuming one knows the parameters $L(s), \ 0 \le s \le t$, one solves the ODE
\be \label{D3}
\frac{dx}{ds} = -\left[ 1 - \left\{ \frac x{L(s)} \right\}^{1/3} \right], \ \ 0 < s < t, \ \ x(t) = x.
\ee
Since $w_0(\cdot)$ given by (\ref{M1}) is the initial data for (\ref{A3}),  it follows that $w(x,t) = w_0(x(0))$, whence the function $F(\cdot,t)$ is defined by $F(x,t)=x(0)$.   It is easy to see that $F(x,t)$ is an increasing function of $x$ which satisfies $F(0,t) > 0$.  One can also derive a formula for $\pa F(x,t)/\pa x$ in terms of the solution $x(s), 0 \le s \le t$, of (\ref{D3}).  It is given by the expression
\be \label{E3}
\frac{\pa F(x,t)}{\pa x} = \exp \left[ - \frac 1 3 \ \int^t_0 \ \frac{ds}{x(s)^{2/3} \; L(s)^{1/3}} \right] ,
\ee
from which one concludes that the function $F(\cdot,t)$ is convex. 
The properties of $F(\cdot,t)$ which shall be crucial to our subsequent analysis can be summarized as follows:
\be \label{N1}
F(0,t)>0, \quad 0<\frac{\pa F(x,t)}{\pa x}<1, \quad \frac{\pa^2 F(x,t)}{\pa x^2}>0.
\ee
It follows from (\ref{N1}) that for all $t\ge 0$, $w(x,t)$ is a nonnegative decreasing function of $x$ which converges to 0 as $x \ra \infty$. If $w_0$ has compact support then $w(\cdot,t)$ also has compact support for all $t\ge 0$.  

One way of solving (\ref{A1}), (\ref{B1}) is to take advantage of the dilation invariance of the LSW system.
Thus assuming say $L(0)=1$ in (\ref{C1}), we solve (\ref{A1}), (\ref{B1}) for time $0\le t\le 1$. Then we rescale $c(\cdot,1)$ so as to make $L(1)=1$ and then solve the LSW system again for a unit time 
interval, but now with initial data given by the rescaled $c(\cdot,1)$. Proceeding in this way we can solve the LSW system up to arbitrarily large  time. The advantage of this method  is that as one iterates the process, the solution of the LSW system over a time interval of length one in the rescaled variables should correspond to larger and larger intervals in the original time variable. This is a consequence of the phenomenon of coarsening. 

The method of solving the LSW system described in the previous paragraph is in fact a type of map iteration on integrable nonnegative decreasing functions $w_0(x), \ x\ge 0$, of the form $w_0(x)\ra w_0(F(x)), \ x\ge0$, plus a rescaling to maintain a given normalization. 
 Letting $X_0$ be the positive random variable with cumulative distribution function given by $P(X_0>x)=w_0(x)/w_0(0)$, we see that if $F(\cdot)$ is increasing then the mapping  $w_0(x)\ra w_0(F(x))$ is equivalent to a mapping $X_0\ra T_F(X_0)$ on positive random variables with finite first moment. We will be concerned with obtaining criteria on the function $F(\cdot)$ which implies  the stability of this mapping under arbitrarily large numbers of iterations, for a rather general class of initial variables $X_0$. By stability we mean that if $X_n, \ n=1,2...$ denote the iterations of $X_0$ under $T_F$, the  fluctuation of $X_n$ relative to its mean should neither become arbitrarily large or small.    We shall show uniform boundedness of relative fluctuations by proving that the $X_n, \ n=1,2...$, uniformly satisfy certain {\it reverse} or {\it sharp} Jensen inequalities.

Key to our method is a non-negative function $\beta_0(x), \ x\ge 0$, derived from $w_0(\cdot)$ which we define in $\S2$. This function appears not to have occurred in the literature before. It does however appear implicitly in a paper of Ball et al \cite{bbn}, where they obtain a variational expression for the Fisher information of a probability density function (see $\S2$). It is shown in $\S2$ that if $\sup\beta_0(\cdot)<\infty$, then the corresponding random variable $X_0$ satisfies a reverse Jensen inequality which implies that relative fluctuation of $X_0$ is bounded above. If  on the other hand $\inf\beta_0(\cdot)>0$, then $X_0$ satisfies a sharp Jensen inequality which implies that relative fluctuation of $X_0$ is bounded from below strictly larger than zero. 

In $\S3$ we  show that for certain functions $F(\cdot)$ the mapping $T_F$ is stable, by examining its effect on the $\beta$ function of the random variable. One easily sees that for a convex function $F(\cdot)$ the inequality $\sup T_F(\beta_0)\le \sup\beta_0$ holds. Hence if $F(\cdot)$ is convex, relative fluctuations of $X_n$ cannot become arbitrarily large. We also obtain conditions on $F(\cdot)$ and the initial variable $X_0$ which imply that if $\beta_n$ are the $\beta$ functions corresponding to the variables $X_n$, then $\inf\beta_n$ is uniformly bounded from below strictly larger than zero as $n\ra\infty$. The conditions we impose on the function $F$ and variable $X_0$ in this case are much more restrictive than for the upper bound.

In $\S4$ we turn to the study of  the LSW iteration, obtaining quite general upper bounds on the rate of coarsening, and lower bounds which hold for initial data which is close in some sense to a Dirac delta distribution.   Bounds on the rate of coarsening follow from stability of the LSW iteration, as described in the previous two paragraphs, by virtue of the identity
\be \label{R1}
\frac{d\Lambda(t)}{dt}= \beta(0,t),
\ee
which is proved in Proposition 4.2.
In (\ref{R1}) the quantity $\Lambda(t)$ is the mean cluster volume (\ref{I1})  at time $t$, and $\beta(\cdot,t)$ is the beta function corresponding to the function $w(\cdot,t)$ of (\ref{L1}).
One should note here that for initial data similarly close to a Dirac delta distribution, the results of Niethammer and Velasquez \cite{nv1} also give lower bounds on the rate of coarsening. 
Finally in $\S5$ we show for the LSW system how to obtain lower bounds on the rate of coarsening which are almost as general as our upper bounds.  To do this we first prove that $\beta(\cdot,t)$ is almost monotonic increasing at large $t$. Then on combining this fact with the argument of $\S4$, we obtain a positive lower bound on $\inf\beta(\cdot,t)$, uniform as $t\ra \infty$, which does not require initial data to be close to a Dirac distribution. 

Suppose now that the initial data $w_0(\cdot)$ of  (\ref{M1}) for the LSW model has compact support $0\le x\le a$, and that the $\beta$ function $\beta_0(\cdot)$ corresponding to $w_0(\cdot)$  has a limit at the end of the support satisfying  $0<\lim_{x\ra a}\beta_0(x)<\infty$. Then the results of the paper show that the upper and lower bounds (\ref{K1}), (\ref{O1}) hold in this case. It is of some interest to compare this condition on the initial data with the conditions on initial data required in \cite{np1,nv1,nv2} for a solution to converge to one of the self-similar solutions (\ref{Q1}). In \cite{np1} it is shown (Theorem 5.10) that a {\it necessary} condition for the solution of the LSW model with initial data $w_0(\cdot)$ to converge to the self-similar solution (\ref{Q1}) with $\beta<1$, is that $w_0(x)$ be a {regularly varying function} at $x=a$ with exponent $p=\beta/(1-\beta)$. In $\S2$ we prove that the condition $\lim_{x\ra a}\beta_0(x)=\beta$ implies that $w_0(x)$ is a {regularly varying function} at $x=a$ with exponent $p=\beta/(1-\beta)$.
One can also see from the proof that the condition $\lim_{x\ra a}\beta_0(x)=\beta$ is only slightly stronger than the condition of regular variation at $a$ with exponent  $p=\beta/(1-\beta)$.

An important feature of the methods developed in this paper is their flexibility. In particular they only use the conservation law (\ref{B1}) in a rather general way. The flexibility therefore makes them unsuitable for direct application to the problem of proving {\it asymptotic stability} for self-similar solutions of the LSW system.
There is however a related system for which the methods developed here do yield global asymptotic stability of self-similar solutions. This system is a linearized version of the LSW system, where the power $1/3$ in (\ref{A1}) is replaced by power $1$. The linearized LSW system was first proposed and studied by Carr and Penrose \cite{carr,cp}, who also proved global asymptotic stability of self-similar solutions. 
 A short proof of global asymptotic stability for the linear model using properties of the beta function is given at the end of $\S4$.

\section{The $\beta$ Function}
We shall be interested in the space $\mathbf{E}$ of integrable nonnegative monotonic decreasing functions $w:[0,\infty) \ra [0,\infty)$.
 Evidently the space $\mathbf{E}$ is equivalent to the space of finite Borel measures $\mu$ on $[0,\infty)$ with finite  first moment. We also see that-up to normalization- the space 
 $\mathbf{E}$ is equivalent to the space of random variables $X$ taking values in $(0,\infty)$ with finite first moment $<X> \ <\infty$.  Now from Jensen's inequality, one has that
 \be \label{C2}
 <X^\alpha> \ \le \ <X>^\al, \quad 0\le\al\le1.
 \ee
It is evident that if $0<\al<1$ then strict inequality occurs in (\ref{C2}) except for the trivial random variable $X$  taking a single value with probability $1$.
Thus for $X$ not the trivial random variable and $0<\al<1$, there is an $\eta(\al, X)>0$, depending on $\alpha$ and $X$, such that
\be \label{AC2}
<X^\al> \  \le  \ [1-\eta(\al,X)]<X>^\al.
\ee
We shall refer to (\ref{AC2}) as the {\it sharp} Jensen inequality. 
It is also clear that the variable $X$ satisfies a {\it reverse} Jensen inequality,
  \be \label{D2}
 <X^\alpha> \ \ge \ C(\al,X)<X>^\al, 
 \ee
for  $ 0<\al<1$, and positive constant $C(\al,X)$ depending on $\alpha$ and $X$.

We shall be concerned here with identifying large classes of variables $X$ for which the constants $C(\al,X)$ and  $\eta(\al, X)$ are uniformly bounded from below strictly larger than zero for $X$ in a certain class.
To do this we introduce a function $\beta(\cdot)$ associated with the variable $X$, which has as domain the interval $[0,\|X\|_\infty)$. The interval may be finite i.e. $ \|X\|_\infty<\infty$, or infinite i.e. $\|X\|_\infty=\infty$. Thus let $w\in\mathbf{E}$ correspond to $X$ and $h:[0,\infty)\ra [0,\infty)$ be defined by
\be \label{F2}
h(x)=\int_x^{\infty} w(x') dx', \quad x\ge 0.
\ee
Then $h$ is a non-negative decreasing convex function such that $h(x)\ra 0$ as $x\ra \|X\|_\infty$. If $h$ is $C^2$ on the interval $[0,\|X\|_\infty)$ we may define the $\beta$ function associated to $X$ by
\be \label{G2}
\beta(x) = h''(x) \; h(x) \big/ \; h'(x)^2, \quad 0\le x<\|X\|_\infty.
\ee
Observe that the function $\beta(\cdot)$ in (\ref{G2}) is invariant under multiplication of $h$ by a constant and by dilation scaling.  The transformation (\ref{G2}) can be illustrated graphically (see Figure 1)  as related to Newton's method for solving the equation $h(z)=0$. 
For $x\ge 0$ the function $x\ra x-h(x)/h'(x)$ maps $x$ to Newton's improved value for the solution to $h(z)=0$. The derivative of this function is $\beta(x)$. 

 We have not been able to find an explicit reference to the $\beta$ function (\ref{G2}) in the literature, but it does appear implicitly in \cite {bbn} in a variational expression for the Fisher information of a probability density function. This variational expression plays an important role in the proof \cite{abbn} of the Shannon conjecture for monotonicity of entropy. If the function $h(x), \ -\infty<x<\infty$, is the probability density function for a random variable, then the standard definition for the Fisher information of $h$ is $J(h)$, given  by the formula
$$
J(h)=\int\frac{h'(x)^2}{h(x)} dx.
$$
Evidently if $h'(\cdot)$ converges to $0$ at $\infty$ then $J(h)$ can be alternatively written as
\be \label{WA2}
J(h)=\int\left[\frac{h'(x)^2}{h(x)} -h''(x)\right]dx= \int\frac{h'(x)^2}{h(x)}[1-\beta(x)] dx.
\ee
Suppose now that $h(\cdot)$ is the marginal density  of a joint probability density function,
$$
h(x)=\int w(x,y) dy.
$$
Then $h'(x)^2/h(x)-h''(x)$ has a variational representation in terms of the function $w(x,\cdot)$, and hence by (\ref{WA2}) the Fisher information $J(h)$ has a  variational representation in terms of the function $w(\cdot,\cdot)$.

It is easy to solve (\ref{G2}) for $\beta(x) \equiv \beta =$ constant.  For $0 < \beta < 1$ the function $h$ has compact support.   For $\beta > 1$ it has polynomial decay and for $\beta = 1$ exponential decay.   The solutions normalized so that $h(0)=1, \ h'(0)=-1$ are given by
\begin{eqnarray} \label{J2}
h(x) &=& \left[ 1 - (1 - \beta)x\right]^{1/(1-\beta)}\;, \ \ 0 < \beta < 1, \\
h(x) &=& e^{-x}  \ , \ \ \ \ \ \ \ \ \ \ \ \ \beta = 1,  \nn \\ 
h(x) &=& 1 \Big/ \left[ 1 + (\beta - 1)x\right]^{1/(\beta - 1)}\;, \ \ \beta > 1. \nn
\end{eqnarray}
Observe that if we set $w(x) = -h'(x)$ with $h$ as in (\ref{J2}) then $w$ is an invariant solution of the linearized LSW equations studied in \cite{cp}.  Note also that $\beta=0$ formally corresponds to the trivial random variable, taking a single value with probability $1$.
For general functions $\beta(\cdot)$ it is easy to see that the condition  $\sup\beta(\cdot)\le 1$ is equivalent to the condition that the function $h(\cdot)$ of  (\ref{G2}) is log-concave. If  there is strict inequality $\sup\beta(\cdot)< 1$, then  one can see from the argument of Lemma 2.1 below that $h(\cdot)$ also has compact support. 

The main result of this section shows how the constants in the sharp and reverse Jensen inequalities 
(\ref{AC2}), (\ref{D2}) can be chosen to depend only on upper and lower bounds for the beta function associated with $X$.
\begin{proposition}
Let $X$ be a positive random variable with associated  beta function  given by $\beta(\cdot)$.  Then \\
(a) For $0<\alpha<1, \  0\le \beta_\infty<\infty$, there is a positive constant $C_0(\alpha, \beta_\infty)$, depending only on $\alpha,  \ \beta_\infty$ such that if $\sup\beta(\cdot)\le   \beta_\infty$, the optimal constant $C(\alpha,X)$ in the reverse Jensen inequality (\ref{D2}) satisfies $C(\alpha,X)\ge C_0(\alpha, \beta_\infty)$. \\
(b) For $0<\alpha<1, \  0< \beta_0<\infty$, there is a positive constant $\eta_0(\alpha, \beta_0)$, depending only on $\alpha,  \ \beta_0$ such that if $\inf\beta(\cdot)\ge   \beta_0$, the optimal constant $\eta(\alpha,X)$ in the sharp Jensen inequality (\ref{AC2}) satisfies $\eta(\alpha,X)\ge \eta_0(\alpha, \beta_0)$.
\end{proposition}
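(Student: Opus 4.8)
The plan is to encode the whole problem in a single scalar first-order ODE driven by $\beta(\cdot)$, and then to recognise the constant-$\beta$ profiles (\ref{J2}) as the extremal cases. Both the ratio $\langle X^\alpha\rangle/\langle X\rangle^\alpha$ — which is exactly the optimal constant $C(\alpha,X)$ in (\ref{D2}), and whose deficit from $1$ is the optimal $\eta(\alpha,X)$ in (\ref{AC2}) — and the function $\beta(\cdot)$ are unchanged under $X\mapsto\lambda X$, i.e.\ under dilation of $h$, so I would first normalise $h(0)=1$, $h'(0)=-1$. Then $\langle X\rangle=\int_0^\infty(-h'(x))\,dx=1$, and it suffices to prove $\langle X^\alpha\rangle\ge C_0(\alpha,\beta_\infty)>0$ in (a) and $\langle X^\alpha\rangle\le 1-\eta_0(\alpha,\beta_0)$ with $\eta_0>0$ in (b). (The trivial variable corresponds to $\beta\equiv0$; it is excluded in (b) by $\beta_0>0$ and is harmless in (a).)

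Next I would set $v(x)=-h(x)/h'(x)$ on $[0,\|X\|_\infty)$. Because $h\in C^2$ is strictly decreasing and convex with $h\to 0$ at the end of its support, $v$ is positive and $C^1$ with $v(0)=1$, and a one-line computation gives the basic identity $v'(x)=\beta(x)-1$. I would then change variables by $t=-\log h(x)$, which is a $C^1$ increasing bijection of $[0,\|X\|_\infty)$ onto $[0,\infty)$ — this is precisely where the hypothesis $h(x)\to 0$ as $x\to\|X\|_\infty$ is used — with inverse $x(t)$ solving $\dot x(t)=v(x(t))$, $x(0)=0$. Combining the tail formula $\langle X^\alpha\rangle=\alpha\int_0^\infty x^{\alpha-1}(-h'(x))\,dx$ with the elementary identities $(-h'(x))\,v(x)=h(x)$ and $h(x(t))=e^{-t}$, this substitution yields the representation $\langle X^\alpha\rangle=\alpha\int_0^\infty x(t)^{\alpha-1}e^{-t}\,dt$, which for $\alpha=1$ recovers $\langle X\rangle=1$.

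The heart of the argument is then a Gronwall estimate on $V(t):=v(x(t))$. From $v'=\beta-1$ one gets $\dot V(t)=(\beta(x(t))-1)\,V(t)$, $V(0)=1$, so $\sup\beta\le\beta_\infty$ forces $V(t)\le e^{(\beta_\infty-1)t}$, while $\inf\beta\ge\beta_0$ forces $V(t)\ge e^{(\beta_0-1)t}$. Writing $\sigma_\gamma(t)=\int_0^t e^{(\gamma-1)s}\,ds$ and integrating $\dot x=V$, one obtains $x(t)\le\sigma_{\beta_\infty}(t)$ in case (a) and $x(t)\ge\sigma_{\beta_0}(t)$ in case (b); since $z\mapsto z^{\alpha-1}$ is decreasing, feeding these into the representation above gives $\langle X^\alpha\rangle\ge\alpha\int_0^\infty\sigma_{\beta_\infty}(t)^{\alpha-1}e^{-t}\,dt=:C_0(\alpha,\beta_\infty)$ in (a) and $\langle X^\alpha\rangle\le\alpha\int_0^\infty\sigma_{\beta_0}(t)^{\alpha-1}e^{-t}\,dt$ in (b). What remains is to verify that $C_0(\alpha,\beta_\infty)$ is finite and positive (near $t=0$, $\sigma_\gamma(t)\sim t$ so the integrand is $O(t^{\alpha-1})$, integrable since $\alpha>0$; as $t\to\infty$ it decays exponentially, using $\alpha<1$ when $\gamma>1$), and that the bounding integral in (b) is strictly less than $1$, in which case $\eta_0(\alpha,\beta_0)$ is one minus that integral. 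The latter follows by comparison with $\gamma=0$: since $\beta_0>0$ we have $\sigma_{\beta_0}(t)\ge\sigma_0(t)=1-e^{-t}$, strictly for $t>0$, while $\alpha\int_0^\infty(1-e^{-t})^{\alpha-1}e^{-t}\,dt=\alpha\int_0^1 u^{\alpha-1}\,du=1$. (Equivalently, $\sigma_\gamma$ is exactly $x(t)$ for the constant-$\beta$ solution (\ref{J2}) with $\beta=\gamma$ normalised to unit mean, so the two extremal constants are the $\alpha$-moments of those explicit profiles, and the strict inequality in (b) is just strict Jensen for a nontrivial variable.)

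I do not expect a genuine conceptual obstacle here; the delicate points are bookkeeping — confirming that $t=-\log h(x)$ really exhausts $[0,\infty)$ (vanishing of $h$ at the end of the support, which is a standing hypothesis) and checking integrability of $C_0(\alpha,\beta_\infty)$ when $\beta_\infty>1$, where the support is infinite and $-h'$ has only power-law decay. Both are handled by the elementary estimates indicated above.
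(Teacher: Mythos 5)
Your argument is correct, and for part (b) it takes a genuinely different route from the paper. Your function $v(x)=-h(x)/h'(x)$ with $v'=\beta-1$ is precisely the reciprocal of $q'(x)$ in the paper's (\ref{M2}), i.e. $v(x)=\langle X\rangle-x+\int_0^x\beta(x')dx'$, so your part (a) is close in substance to the paper's Lemma 2.1: there the author integrates this relation to get the explicit tail formula (\ref{O2}) and shows $w(x)/w(0)\ge 1/2$ on an interval of length $C_1(\beta_\infty)\langle X\rangle$, which gives $\langle X^\alpha\rangle\ge\tfrac12[C_1(\beta_\infty)\langle X\rangle]^\alpha$; you instead pass to the logarithmic time $t=-\log h(x)$, run a Gronwall comparison on $V(t)=v(x(t))$, and read off the constant as the $\alpha$-moment of the constant-$\beta_\infty$ profile. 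The real divergence is in part (b): the paper proves it through a quantitative Jensen inequality (Lemma 2.2, built on the auxiliary function $g(z)=|z^\alpha-1|^{1/\alpha}+z^\alpha/\alpha-z$, with the $\alpha\le 1/2$ restriction handled by the conditioned variant) combined with the tail estimates of Lemma 2.3, namely $E[X\,|\,X>x]\ge\langle X\rangle+\beta_0 x$, the bound $P(X>\lambda\langle X\rangle)\le 1/(1+\beta_0\lambda)$, and the resulting mass-escape estimate (\ref{AG2}); you bypass all of that by the same comparison argument in the $t$ variable, bounding $x(t)\ge\sigma_{\beta_0}(t)$ and concluding $\langle X^\alpha\rangle\le\alpha\int_0^\infty\sigma_{\beta_0}(t)^{\alpha-1}e^{-t}dt<1$, with the strict gap quantified by comparison with $\sigma_0(t)=1-e^{-t}$. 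Your route is more unified (one mechanism handles both (a) and (b)), yields explicit extremal constants, and identifies the constant-$\beta$ profiles (\ref{J2}) as the extremisers, which the paper's proof does not make visible; what the paper's route buys is the standalone quantitative Jensen inequality of Lemma 2.2 and the tail/conditional-expectation bounds of Lemma 2.3, which have some independent interest, but they are not needed for the proposition itself. The only points you flag as delicate — that $t=-\log h$ exhausts $[0,\infty)$ because $h\to 0$ at the end of the support, and integrability of the extremal integral when $\beta_\infty>1$ — are indeed handled by the elementary checks you indicate, so I see no gap.
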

We shall prove Proposition 2.1 in a series of lemmas. The proof of (a) is given by the following:
\begin{lem}
Suppose $X$ is a positive random variable and its associated $\beta(\cdot)$ function satisfies
$\sup\beta(\cdot) \le \beta_\infty<\infty$. Then for any $\al, \ 0<\al<1$, the inequality (\ref{D2}) holds for a constant $C(\al,X)=C_1(\beta_\infty)^\al$, where  $C_1(\beta_\infty)$ depends only on $\beta_\infty$.
\end{lem}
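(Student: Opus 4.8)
The plan is to prove, after a convenient normalization, the pointwise bound $w(x)\ge h(x)^{\beta_\infty}$ for the function $h$ of (\ref{F2}), and then to insert this into the tail (layer-cake) formula for $\av{X^{\al}}$; the rest is an elementary one-variable estimate.

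First I would normalize. Since the ratio $\av{X^{\al}}/\av{X}^{\al}$ and the function $\beta(\cdot)$ are both unchanged when $X$ is replaced by $X/\av X$ (this is a dilation of the associated $h$, under which $\beta(\cdot)$ is invariant), I may assume $\av X=1$. Taking $w\in\mathbf{E}$ to be $w(x)=P(X>x)$, I then have $w(0)=1$ and, by (\ref{F2}) at $x=0$, $h(0)=\int_0^\infty w=\av X=1$, hence $h'(0)=-w(0)=-1$. With this choice $P(X>x)=w(x)$, so
\[
\av{X^{\al}}=\al\int_0^\infty x^{\al-1}\,P(X>x)\,dx=\al\int_0^\infty x^{\al-1}w(x)\,dx ,
\]
and the task reduces to bounding $w$ from below on a bounded set near the origin.

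The heart of the argument is the inequality $w(x)\ge h(x)^{\beta_\infty}$ for $0\le x<\|X\|_\infty$. On this interval $w>0$, $h>0$ and $h'=-w$, so $\beta(x)=h''h/(h')^2=-w'h/w^2$, and the hypothesis $\beta(x)\le\beta_\infty$ is exactly $-w'(x)/w(x)\le\beta_\infty\,w(x)/h(x)$. But this in turn says precisely that
\[
\frac{d}{dx}\log\frac{w(x)}{h(x)^{\beta_\infty}}=\frac{w'}{w}-\beta_\infty\frac{h'}{h}=\frac{w'}{w}+\beta_\infty\frac{w}{h}\ \ge\ 0 ,
\]
so the ratio $w/h^{\beta_\infty}$, which equals $1$ at $x=0$, stays $\ge 1$ throughout. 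I expect this step — recognizing $w/h^{\beta_\infty}$ as the monotone quantity controlled by the hypothesis — to be the only substantive point of the proof; note it is sharp, equality holding when $\beta(\cdot)\equiv\beta_\infty$, as for the equilibrium profiles (\ref{J2}).

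Finally I would combine this with the lower bound on $h$ itself coming from convexity: $h$ is convex with $h(0)=1$ and $h'(0)=-1$, hence $h(x)\ge 1-x$ on $[0,\|X\|_\infty)$, and since $\av X=1$ forces $\|X\|_\infty\ge 1$ the interval $[0,1)$ lies in the domain. Therefore
\[
\av{X^{\al}}\ \ge\ \al\int_0^1 x^{\al-1}w(x)\,dx\ \ge\ \al\int_0^1 x^{\al-1}h(x)^{\beta_\infty}\,dx\ \ge\ \al\int_0^1 x^{\al-1}(1-x)^{\beta_\infty}\,dx .
\]
Letting $n=\lceil\beta_\infty\rceil$ and using $(1-x)^{\beta_\infty}\ge(1-x)^n$ on $[0,1]$, the last quantity is at least the Beta integral $\al\int_0^1 x^{\al-1}(1-x)^n\,dx=\prod_{k=1}^n k/(\al+k)=\prod_{k=1}^n(1+\al/k)^{-1}$, which by $\log(1+u)\le u$ is $\ge\exp\!\big(-\al\sum_{k=1}^n 1/k\big)$. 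Hence $\av{X^{\al}}\ge C_1(\beta_\infty)^{\al}$ with $C_1(\beta_\infty)=\exp\!\big(-\sum_{k=1}^{\lceil\beta_\infty\rceil}1/k\big)$, and undoing the normalizing dilation yields (\ref{D2}) with $C(\al,X)=C_1(\beta_\infty)^{\al}$; the trivial variable $X$ (and the case $\beta_\infty=0$, where the bound forces $X$ constant) is immediate with $C_1=1$.
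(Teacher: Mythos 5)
Your proof is correct, and while it shares the paper's overall strategy, the central estimate is obtained by a genuinely different mechanism. Both arguments rest on the moment formula $\langle X^{\alpha}\rangle=\alpha\int_0^\infty x^{\alpha-1}w(x)\,dx/w(0)$ together with a pointwise lower bound on $w$ near the origin forced by $\sup\beta(\cdot)\le\beta_\infty$; but the paper gets that lower bound by integrating the ODE for $q=-\log h$ to the explicit representation (\ref{O2}) and then extracting only the crude statement $w(x)\ge w(0)/2$ on an interval of length $C_1(\beta_\infty)\langle X\rangle$, whereas you observe that $\beta\le\beta_\infty$ is precisely the statement that $w/h^{\beta_\infty}$ is nondecreasing (since $\beta=-w'h/w^2$), which after the dilation normalization $\langle X\rangle=1$, $w(0)=h(0)=1$ gives $w\ge h^{\beta_\infty}\ge (1-x)^{\beta_\infty}$ by convexity of $h$, and you then evaluate the resulting Beta integral. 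Each step checks out: the rearrangement of $\beta\le\beta_\infty$ into $\frac{d}{dx}\log(w/h^{\beta_\infty})\ge 0$ is valid on $[0,\|X\|_\infty)$ where $w,h>0$; $\|X\|_\infty\ge\langle X\rangle=1$ puts $[0,1)$ inside the domain; and the product estimate $\prod_{k=1}^n(1+\alpha/k)^{-1}\ge e^{-\alpha H_n}$ is correct. What your route buys is an argument that avoids the explicit solution formula, produces the fully explicit constant $C_1(\beta_\infty)=\exp\big(-\sum_{k=1}^{\lceil\beta_\infty\rceil}1/k\big)$, and yields the bound literally in the form $C(\alpha,X)=C_1(\beta_\infty)^{\alpha}$ asserted in the lemma; the paper's inequality (\ref{Q2}) carries an extra factor $\tfrac12$, which for small $\alpha$ cannot be absorbed into a bound of the form $\tilde C^{\alpha}\langle X\rangle^{\alpha}$ with $\tilde C$ independent of $\alpha$ (though it is of course sufficient for all later applications). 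Your separate treatment of the trivial variable and of $\beta_\infty=0$ is also fine.
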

\begin{proof}
We begin by obtaining an explicit formula for the function $h$ of (\ref{G2}) in terms of its $\beta(\cdot)$ function. To do this we set $h(x) = \exp[-q(x)]$ in (\ref{G2}), in which case  equation (\ref{G2}) becomes 
\[	-q''(x)\; / \; q'(x)^2= \beta(x) - 1. 	\]
Hence $q'(x)$ is given by the formula
\be \label{M2} 
q'(x) = 1 \; \big/ \; \left[ 1/q'(0) - x + \int^x_0 \beta(x')dx' \right].
\ee
Evidently if the function $\beta(\cdot)$ is associated to the random variable $X$, then \\ $1/q'(0)=<X>$.
Integrating (\ref{M2}) we conclude that
\be \label{N2}
q(x) = q(0) + \int^x_0 \; dz\; \big/ \; \left[ 1/q'(0) - z + \int^z_0 \beta(z')dz' \right], \ 0 \le x < \|X\|_\infty.
\ee
Hence if $w(x)=-h'(x), \ 0\le x<\|X\|_\infty$, then
\be \label{O2}
\frac{w(x)}{w(0)} = \frac{<X>} { \left[<X> - x + \int^x_0 \beta(z')dz' \right]} \ \exp \left[ - \int^x_0 \; \frac{dz} {\left[<X> - z + \int^z_0 \beta(z')dz' \right]} \right].
\ee
From (\ref{O2}) we see that there is a positive constant $C_1(\beta_\infty)$ depending only on $\beta_\infty$, such that
\be \label{P2}
w(x)/w(0)\ge  1/2, \quad 0\le x\le C_1(\beta_\infty) <X>.
\ee
Now (\ref{P2}) implies that
\be \label{Q2}
<X^\al> \ = \ \frac {\al \int_0^\infty x^{-(1-\al)} w(x)dx}{w(0)} \ \ge \  \frac{1}{2} [C_1(\beta_\infty) <X>]^\al.
\ee
\end{proof}
To prove (b) we first obtain a quantitative version of the Jensen inequality (\ref{C2}). 
\begin{lem}
For a positive  random variable $X$ and $\al$ satisfying $0<\al\le 1/2$, there is the inequality,
\be \label{AI2}
E\left[|<X>^\al-X^\al|^{1/\al}\right]  \ \le  \  C(\al) <X>^{1-\al}\left[ <X>^\al-<X^\al>\right],
\ee
and the constant $C(\al)$ satisfies $C(\al)\le 1/\al$. For $0<\al<1$ and $\ve>0$, there is the inequality
\be \label{AJ2}
E\left[|<X>^\al-X^\al|^{1/\al}; \ X>(1+\ve)<X>\right]  \ \le  \  C(\al, \ve) <X>^{1-\al}\left[ <X>^\al-<X^\al>\right],
\ee
where the constant $C(\al, \ve)$ depends only on $\al, \ \ve$.
\end{lem}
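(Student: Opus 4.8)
The plan is to reduce both inequalities to a pointwise comparison of functions of one real variable. First, since both sides of (\ref{AI2}) and of (\ref{AJ2}) scale by $\lambda$ under the dilation $X\mapsto\lambda X$ (for (\ref{AJ2}) note the event $\{X>(1+\ve)\av{X}\}$ is dilation invariant), I may normalize $\av{X}=1$. The organizing device is the elementary identity
\[
1-\av{X^\al}\;=\;E[\psi(X)],\qquad \psi(x)\DEF \al x+(1-\al)-x^\al ,
\]
which holds because $E[X]=1$: indeed $E[\psi(X)]=\al E[X]+(1-\al)-E[X^\al]=\al+(1-\al)-\av{X^\al}$. Here $\psi(x)$ is the difference between the tangent line to $t\mapsto t^\al$ at $t=1$ and the function itself, so by concavity $\psi\ge0$ on $[0,\infty)$ and $\psi(1)=0$. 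Thus each desired bound will follow from a pointwise inequality of the form $|1-x^\al|^{1/\al}\le C\,\psi(x)$ on the relevant range of $x$, on taking expectations. (All the expectations in sight are finite since $|1-x^\al|^{1/\al}\le\max(1,x)\le 1+x$.)

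For (\ref{AI2}), with $0<\al\le1/2$ I would prove the global pointwise estimate $|1-x^\al|^{1/\al}\le\al^{-1}\psi(x)$ for all $x\ge0$. Setting $p=1/\al\ge2$ and substituting $u=x^\al$ turns this into a polynomial-type inequality: for $0\le x\le1$ (so $0\le u\le1$) it reads $\Phi(u)\DEF\al u^p-\al(1-u)^p-u+(1-\al)\ge0$, and for $x\ge1$ (so $u\ge1$) it reads $\Psi(u)\DEF\al u^p-\al(u-1)^p-u+(1-\al)\ge0$. Both follow by the same routine: each vanishes at $u=1$, and using $\al p=1$ one computes $\Phi'(u)=u^{p-1}+(1-u)^{p-1}-1\le0$ on $[0,1]$ (because $t^{p-1}\le t$ for $t\in[0,1]$, $p-1\ge1$) and $\Psi'(u)=u^{p-1}-(u-1)^{p-1}-1\ge0$ on $[1,\infty)$ (the map $u\mapsto u^{p-1}-(u-1)^{p-1}$ is nondecreasing for $p\ge2$ and equals $1$ at $u=1$); monotonicity then gives $\Phi,\Psi\ge0$. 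Integrating $|1-X^\al|^{1/\al}\le\al^{-1}\psi(X)$ and invoking the identity yields (\ref{AI2}) with $C(\al)=1/\al$. The hypothesis $\al\le1/2$, i.e. $p\ge2$, is exactly what is needed: near $x=1$ the left side is $\sim\al^{1/\al}|x-1|^{1/\al}$ while $\psi(x)\sim\tfrac{\al(1-\al)}{2}(x-1)^2$, so a global bound of this type must fail once $1/\al<2$.

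For (\ref{AJ2}) the restriction to $\{X>(1+\ve)\av{X}\}$, i.e. to $\{x>1+\ve\}$ after normalization, removes the obstruction at $x=1$ and allows any $\al\in(0,1)$. On $[1+\ve,\infty)$ one has $\psi(x)>0$, and I would simply set
\[
C(\al,\ve)\DEF\sup_{x\ge1+\ve}\frac{(x^\al-1)^{1/\al}}{\psi(x)}<\infty ,
\]
finite because the quotient is continuous on $[1+\ve,\infty)$ and, as $x\to\infty$, its numerator is asymptotic to $x$ and $\psi(x)$ to $\al x$, so the quotient tends to $1/\al$. Then, since $|1-x^\al|^{1/\al}=(x^\al-1)^{1/\al}$ on this event and $\psi\ge0$ everywhere,
\[
E\big[|1-X^\al|^{1/\al};\,X>1+\ve\big]\le C(\al,\ve)\,E[\psi(X);\,X>1+\ve]\le C(\al,\ve)\,E[\psi(X)]=C(\al,\ve)\big(1-\av{X^\al}\big),
\]
and undoing the normalization gives (\ref{AJ2}).

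The only genuinely delicate point is the sharp global estimate $|1-x^\al|^{1/\al}\le\al^{-1}\psi(x)$ behind (\ref{AI2}): one must hit on the right combination — equivalently, the correct multiplier of the mean-zero function $x-1$ so that the bound holds pointwise and not merely in expectation — and then notice that the substitution $u=x^\al$ collapses it to two one-line monotonicity checks with matching boundary values at $u=1$. Everything else (the scaling reduction, the identity for $1-\av{X^\al}$, integrability, and the restricted estimate (\ref{AJ2})) is routine.
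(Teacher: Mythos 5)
Your proof is correct and is essentially the paper's argument in different clothing: your pointwise bound $|1-x^\al|^{1/\al}\le \al^{-1}\psi(x)$ with $\psi(x)=\al x+(1-\al)-x^\al$ is exactly the paper's assertion that $g(z)=|z^\al-1|^{1/\al}+z^\al/\al-z$ attains its maximum $1/\al-1$ at $z=1$, and your finite supremum $C(\al,\ve)=\sup_{x\ge 1+\ve}(x^\al-1)^{1/\al}/\psi(x)$ corresponds to the paper's choice of small $\del$ in $h_{\ve,\del}$. The only difference is that you carry out explicitly (via $u=x^\al$ and the monotonicity of $\Phi,\Psi$) the calculus verification the paper leaves to the reader.
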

\begin{proof}
Consider the function $g(\cdot)$, defined by
$$
g(z)=|z^\al-1|^{1/\al} +z^\al/\al-z, \quad z>0.
$$
If $0<\al\le1/2$ then $g(\cdot)$ has a maximum which is attained at $z=1, \ g(1)=1/\al-1$. Hence
\be \label{AK2}
E\left[ g\left(\frac{X}{<X>}\right)\right] \ \le \ \frac{1}{\al}-1.
\ee
The inequality (\ref{AK2}) implies (\ref{AI2}) with $C(\al)=1/\al$.

To prove (\ref{AJ2}) for $\ve, \del>0$ let $h_{\ve,\del}(\cdot)$ be the function defined by
\begin{eqnarray*}
h_{\ve,\del}(z)&=& z^\al/\al-z, \quad 0<z<1+\ve, \\
h_{\ve,\del}(z)&=& \del g(z)+(1-\del)[z^\al/\al-z], \quad  z>1+\ve.
\end{eqnarray*}
Then for $\del>0$ sufficiently small depending on $\ve,  \al$,  the function $h_{\ve,\del}(\cdot)$has a maximum which is attained at  $z=1, \ h_{\ve,\del}(1)=1/\al-1$. The inequality (\ref{AJ2}) follows now from the inequality,
\be \nn
E\left[ h_{\ve,\del}\left(\frac{X}{<X>}\right)\right] \ \le \ \frac{1}{\al}-1.
\ee
\end{proof}
\begin{rem}
Observe that the inequality (\ref{AI2}) does not hold if $\al>1/2$ for any constant $C(\al)$. To see this let $Z$ be the standard normal variable and $X$ be the variable $1+\sigma Z$ conditioned on $Z>-1/\sigma$. Then for $\sigma$ small the RHS of (\ref{AI2}) behaves like $\sigma^2$ and the LHS like $\sigma^{1/\al}$.
\end{rem}
Statement (b) of Proposition 2.1 is now a consequence of the following:
\begin{lem}
Suppose $X$ is a positive random variable and its associated $\beta(\cdot)$ function satisfies
$\inf \{\beta(\cdot)\} \ge \beta_0>0$. Then for any $\al, \ 0<\al<1$, the inequality (\ref{AC2}) holds for a constant $\eta(\al,X)=\eta_0(\al,\beta_0)$, where  $\eta_0(\al, \beta_0)$ depends only on $\al, \ \beta_0$.
\end{lem}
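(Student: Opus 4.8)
\textit{Proof proposal.} Since both the $\beta$ function and the optimal constant $\eta(\al,X)=1-\av{X^\al}/\av{X}^\al$ are invariant under the scaling $X\mapsto\lambda X$, I would first normalise so that $\av{X}=1$, and then aim to produce a constant $\eta_0(\al,\beta_0)>0$ with $\av{X^\al}\le 1-\eta_0(\al,\beta_0)$. The plan is to combine the quantitative Jensen inequality (\ref{AJ2}) with a \emph{non-concentration} bound on $X$: I will show that $\inf\beta(\cdot)\ge\beta_0$ forces $X$ to keep a fixed amount of mass a fixed distance above its mean.

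For the non-concentration bound I would use the explicit representation obtained in the proof of Lemma 2.1. With $h(x)=\int_x^\infty w(x')\,dx'$ and $\phi(z)=1-z+\int_0^z\beta(z')\,dz'$ (so $\phi(0)=1/q'(0)=\av{X}=1$), equation (\ref{N2}) gives $h(x)/h(0)=\exp\!\big[-\int_0^x dz/\phi(z)\big]$; since $P(X>x)=w(x)/w(0)$ and $w(0)=h(0)$ by the normalisation, one has $E[(X-1)^+]=\int_1^\infty w(x)\,dx/w(0)=h(1)/h(0)$. The hypothesis $\beta\ge\beta_0$ yields the pointwise lower bound $\phi(z)\ge 1-(1-\beta_0)z$, which is positive on $[0,1]$ for every $\beta_0>0$, so $\int_0^1 dz/\phi(z)\le\int_0^1 dz/\big(1-(1-\beta_0)z\big)=:L(\beta_0)<\infty$ and hence $E[(X-1)^+]\ge e^{-L(\beta_0)}=:c_1(\beta_0)>0$. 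Taking $\ve=\ve(\beta_0):=c_1(\beta_0)/2$ and using $\int_1^{1+\ve}P(X>x)\,dx\le\ve$ (because $P(X>x)\le1$) gives $E\big[(X-1-\ve)^+\big]\ge c_1(\beta_0)-\ve=\ve(\beta_0)>0$.

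It remains to feed this into (\ref{AJ2}). For $x\ge 1+\ve$ the map $t\mapsto (t^\al-1)^{1/\al}/t=(1-t^{-\al})^{1/\al}$ is increasing, so $|1-x^\al|^{1/\al}=(x^\al-1)^{1/\al}\ge c(\al,\ve)\,x\ge c(\al,\ve)\,(x-1-\ve)^+$ with $c(\al,\ve)=\big((1+\ve)^\al-1\big)^{1/\al}/(1+\ve)>0$. Integrating against the law of $X$ on $\{X>1+\ve\}$ and using the bound of the previous paragraph,
\[
E\big[\,|\av{X}^\al-X^\al|^{1/\al};\ X>(1+\ve)\av{X}\,\big]\ \ge\ c(\al,\ve)\,\ve(\beta_0)\ >\ 0 ,
\]
and (\ref{AJ2}), whose constant $C(\al,\ve)$ depends only on $\al,\ve$ and hence only on $\al,\beta_0$, then gives $1-\av{X^\al}\ge c(\al,\ve)\ve(\beta_0)/C(\al,\ve)=:\eta_0(\al,\beta_0)>0$, which is (\ref{AC2}). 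Undoing the normalisation changes nothing, since $\eta_0$ depends only on $\al$ and $\beta_0$.

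The step I expect to be the main obstacle is the non-concentration estimate. Because $\beta$ is bounded below but not above, $\phi$ may be large and so $w(x)/w(0)$ admits no pointwise-in-$x$ lower bound (for instance $w$ can be tiny at $x=\tfrac12\av{X}$); what saves the argument is that the \emph{integrated} quantity $h(1)/h(0)=\exp[-\int_0^1 dz/\phi]$ is controlled purely by the lower bound $\phi\ge 1-(1-\beta_0)z$ on $[0,1]$, so one is forced to work with $E[(X-\av{X})^+]$ rather than with any single value of $w$.
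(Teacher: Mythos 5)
Your argument is correct, and it follows the paper's strategy in outline: both proofs reduce (\ref{AC2}) to showing that a fixed fraction of the mass of $X$ lies above $(1+\ve)\av{X}$ for some $\ve=\ve(\beta_0)>0$, and both then finish with the quantitative Jensen inequality (\ref{AJ2}) of Lemma 2.2 exactly as you do, by lower-bounding $|\av{X}^\al-X^\al|^{1/\al}$ by a multiple of $X$ on that event. The only real divergence is in how the non-concentration estimate is extracted from $\inf\beta(\cdot)\ge\beta_0$. The paper works at the level of the distribution: from (\ref{M2}) it gets $E[X\,|\,X>x]\ge\av{X}+\beta_0x$ (inequality (\ref{AD2})), hence the tail bound $P(X>\la\av{X})\le1/(1+\beta_0\la)$ of (\ref{AE2}) and the truncated-mean bound (\ref{AG2}), which shows $E[X;X<(1+\xi)\av{X}]$ stays strictly below $\av{X}$ for $\xi=\xi(\beta_0)$ small. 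You instead integrate the representation (\ref{N2}) to obtain $E[(X-\av{X})^+]\ge e^{-L(\beta_0)}\av{X}$; this rests on the same identity $h(x)/w(x)=\av{X}-x+\int_0^x\beta(z)\,dz$, so the two derivations are close cousins --- the paper's is marginally more economical (no exponential formula or normalisation needed), while yours makes the role of the pointwise bound $\phi(z)\ge1-(1-\beta_0)z$ more transparent, and the constants obtained are of comparable quality. One small point you should state explicitly: integrating $dz/\phi(z)$ over $[0,\av{X}]$ uses $\av{X}<\|X\|_\infty$, which holds because $\inf\beta(\cdot)\ge\beta_0>0$ excludes the one-point variable (for which $\beta\equiv0$).
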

\begin{proof}
It follows from (\ref{M2}) that 
\be \label{AD2}
E[X|X>x] \ \  \ge  \ \  <X>+\beta_0 x, \quad 0<x<\|X\|_\infty.
\ee
Using the fact that for any $x>0$,   one has $E[X;X>x] \  \ \le \ \ <X>$, we conclude from (\ref{AD2}) that 
\be \label{AE2}
P(X> \ \la <X>) \le 1/(1+\beta_0\la), \quad \la >0.
\ee
Now (\ref{AE2}) implies that for $\xi>0$, one has
\be \label{AF2}
E[X;X \ < \ (1+\xi)<X>] \ \le \ \frac{1+\xi+\beta_0\la^2}{1+\beta_0\la}<X> \ , \quad0< \la<1+\xi.
\ee
Minimizing the RHS of (\ref{AF2}) with respect to $\la>0, \ 0<\la<1+\xi$, we conclude that
\be \label{AG2}
E[X;X \ < \ (1+\xi)<X>] \ \le \   2\frac{1+\xi}{\sqrt{1+(1+\xi)\beta_0}+1}<X> .
\ee 
Note that the RHS of (\ref{AG2}) is strictly less than $<X>$ if $\xi>0$ is sufficiently small, whence the result follows from Lemma 2.2.
\end{proof}
Let $X$ be a positive random variable such that $\|X\|_\infty<\infty$. We shall say that $X$ is {\it regularly varying} with exponent $p\ge 0$ if the function $w(x)=P(X>x)$ satisfies
\be \label{WB2}
\lim_{z\ra 0}  \frac{w(\|X\|_\infty-\la z) }{\la^p w(\|X\|_\infty-z) } =1, \quad {\rm for  \ all \ } \la>0. 
\ee
The following result shows the connection between properties of the beta function for $X$ and the regularly varying property (\ref{WB2}). 
\begin{proposition}
Let $X$ be a positive random variable such that $\|X\|_\infty<\infty$ and $\beta(\cdot)$ be its beta function. If the limit $\lim_{x\ra \|X\|_\infty} \beta(x)=\beta_0$ exists and $\beta_0<1$, then the variable $X$ is regularly varying with exponent $p=\beta_0/(1-\beta_0)$.
\end{proposition}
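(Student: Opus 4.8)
The plan is to use the explicit integral representation of $w$ in terms of the beta function derived in the proof of Lemma 2.1. Writing $a = \|X\|_\infty < \infty$ and normalizing so that $\langle X\rangle = 1/q'(0)$, equation (\ref{O2}) gives
\[
\frac{w(x)}{w(0)} = \frac{\langle X\rangle}{D(x)}\ \exp\left[-\int_0^x \frac{dz}{D(z)}\right], \qquad D(x) = \langle X\rangle - x + \int_0^x \beta(z')\,dz'.
\]
Since $w(x)\to 0$ as $x\to a$ and the exponential factor is bounded away from $0$ on any compact subinterval of $[0,a)$, the divergence of $w$ to $0$ is driven by the logarithmic divergence of $\int_0^x dz/D(z)$ as $x\to a$; in particular $D(x)\to 0$ as $x\to a$, and $D(x) > 0$ on $[0,a)$.

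First I would analyze $D$ near $x=a$. Differentiating, $D'(x) = \beta(x) - 1$, so $D'(x)\to \beta_0 - 1 < 0$ as $x\to a$. Hence $D(a-s) \sim (1-\beta_0)s$ as $s\to 0^+$, and more precisely $D(a-s) = (1-\beta_0)s\,(1+o(1))$. This is the crucial local estimate: it says $1/D$ behaves like $1/[(1-\beta_0)s]$ near the endpoint, up to a factor tending to $1$. Consequently
\[
\int_0^{a-s}\frac{dz}{D(z)} = -\frac{1}{1-\beta_0}\log s + g(s),
\]
where $g(s)$ has a finite limit as $s\to 0^+$: indeed $\frac{1}{D(a-t)} - \frac{1}{(1-\beta_0)t}$ is integrable near $t=0$ because the difference is $O(1)$ — wait, one must be slightly careful, since $o(1)$ alone does not give integrability of the difference against $dt/t$. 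I would instead argue directly: for any $\ve > 0$, eventually $\frac{1-\ve}{(1-\beta_0)t} \le \frac{1}{D(a-t)} \le \frac{1+\ve}{(1-\beta_0)t}$, so
\[
\frac{1-\ve}{1-\beta_0}\log\frac{s_0}{s} + O(1) \le \int_0^{a-s}\frac{dz}{D(z)} - \int_0^{a-s_0}\frac{dz}{D(z)} \le \frac{1+\ve}{1-\beta_0}\log\frac{s_0}{s} + O(1),
\]
for $s < s_0$ with $s_0$ small. Feeding this into (\ref{O2}) and absorbing the (bounded, with a limit as $s\to 0$) prefactor $\langle X\rangle/D(a-s)$ against $s^{-1}$, I obtain that for every $\ve>0$ there are constants $0 < c_\ve \le C_\ve < \infty$ with
\[
c_\ve\, s^{\frac{1}{1-\beta_0}-1+\ve} \le \frac{w(a-s)}{w(0)} \le C_\ve\, s^{\frac{1}{1-\beta_0}-1-\ve}
\]
for all small $s>0$. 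Since $\frac{1}{1-\beta_0}-1 = \frac{\beta_0}{1-\beta_0} = p$, this is already "$w$ is regularly varying with exponent $p$ up to $\ve$ in the exponent."

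To upgrade this to the exact regular-variation statement (\ref{WB2}), I would form the ratio directly rather than comparing to a power. For fixed $\la>0$,
\[
\frac{w(a-\la z)}{\la^p\, w(a-z)} = \la^{-p}\cdot\frac{D(a-z)}{D(a-\la z)}\cdot\exp\left[\int_{a-\la z}^{a-z}\frac{dz'}{D(z')}\right] = \la^{-p}\cdot\frac{D(a-z)}{D(a-\la z)}\cdot\exp\left[\int_z^{\la z}\frac{D'(a-t)}{D(a-t)}\cdot\frac{1}{?}\right];
\]
more cleanly, substituting $z' = a-t$ in the integral gives $\int_{a-\la z}^{a-z}\frac{dz'}{D(z')} = \int_z^{\la z}\frac{dt}{D(a-t)}$. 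As $z\to 0$ the integrand is $\frac{1+o(1)}{(1-\beta_0)t}$ uniformly for $t\in[z,\la z]$ (or $[\la z, z]$), so $\int_z^{\la z}\frac{dt}{D(a-t)} \to \frac{\log\la}{1-\beta_0} = p\log\la + \log\la$; hence the exponential tends to $\la^{p+1}$. Meanwhile $D(a-z)/D(a-\la z) \to \frac{(1-\beta_0)z}{(1-\beta_0)\la z} = 1/\la$. Multiplying, $\la^{-p}\cdot\la^{-1}\cdot\la^{p+1} = 1$, which is exactly (\ref{WB2}). The main obstacle, and the only point needing genuine care, is making the "$1+o(1)$ uniformly for $t$ in a shrinking interval" statement rigorous and controlling the integral $\int_z^{\la z} dt/D(a-t)$ — this follows because $D(a-t)/t \to 1-\beta_0$ as $t\to 0$, so $\frac{t}{D(a-t)}$ is continuous and tends to $\frac{1}{1-\beta_0}$ at $t=0^+$, hence bounded and uniformly close to $\frac{1}{1-\beta_0}$ on $[0,\delta]$; then $\int_z^{\la z}\frac{dt}{D(a-t)} = \int_z^{\la z}\frac{t}{D(a-t)}\cdot\frac{dt}{t}$ and the first factor is squeezed between $\frac{1}{1-\beta_0}\pm\ve$ on the whole interval once $\la z < \delta$, while $\int_z^{\la z} dt/t = \log\la$ exactly. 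I should also remark that the case $\|X\|_\infty < \infty$ with the endpoint of the support of $w$ strictly less than $a$ does not arise here, since $D(x)>0$ forces $w(x)>0$ on all of $[0,a)$, so $a$ is genuinely the right endpoint.
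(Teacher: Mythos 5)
Your proof is correct, but it reaches the conclusion by a route different from the paper's. The paper passes to logarithmic variables $\xi=-\log[\|X\|_\infty-x]$, $k(\xi)=-\log P(X>x)$, proves in Lemma 2.4 the two-sided equivalence between $\lim_{x\ra\|X\|_\infty}\beta(x)=\beta_0$ and $\lim_{\xi\ra\infty}k'(\xi)=\beta_0/(1-\beta_0)$, and then concludes by citing Lemma 5.7 of \cite{np1}, which characterizes regular variation through the difference quotients $[k(\xi+L)-k(\xi)]/L\ra p$. Your argument instead verifies the definition (\ref{WB2}) directly from the representation (\ref{O2}): the shared analytic core is the endpoint asymptotics $D(a-s)=1/q'(a-s)\sim(1-\beta_0)s$, which is exactly the paper's (\ref{WE2}), but you then split the ratio into the prefactor $D(a-z)/D(a-\la z)\ra 1/\la$ and the exponential of $\int_z^{\la z}dt/D(a-t)\ra (p+1)\log\la$, controlled by the uniform squeeze $t/D(a-t)\ra 1/(1-\beta_0)$ on $[0,\delta]$. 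Taking logarithms of your ratio identity recovers precisely the condition $k(\xi+L)-k(\xi)\ra pL$ of (\ref{WJ2}), so in substance you give a self-contained proof of the implication actually needed, avoiding both the change of variables and the appeal to \cite{np1}; what the paper's organization buys in exchange is the ``if and only if'' statement of Lemma 2.4, which it exploits in Remark 2. One cosmetic repair: your justification that $D(x)\ra 0$ as $x\ra\|X\|_\infty$ via ``$w(x)\ra 0$'' is not forced by the hypotheses (when $\beta_0=0$ the tail probability may have a positive limit at the endpoint); it is cleaner to note that $D(x)=h(x)/w(x)\le\|X\|_\infty-x$ because $w$ is decreasing, which gives $D(x)\ra 0$ at once, after which your key estimate $D(a-s)=(1-\beta_0)s\,(1+o(1))$ follows from $D'=\beta-1$ exactly as you argue.
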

In view of Lemma 5.7 of \cite{np1}, Proposition 2.2 is a consequence of the following:
\begin{lem} Let $X$ be a positive random variable such that $\|X\|_\infty<\infty$ and $\beta(\cdot)$ be its beta function. Define a function $k(\cdot)$ by $k(\xi)=-\log[P(X>x)]$, where $\xi=-\log[\|X\|_\infty-x], \ x<\|X\|_\infty$. Then for $0\le \beta_0<1$ the limit  $\lim_{x\ra \|X\|_\infty} \beta(x)=\beta_0$ exists, if and only if the limit   $\lim_{\xi\ra \infty} k'(\xi)=p$ of the derivative of $k(\cdot)$ exists with $p=\beta_0/(1-\beta_0)$.
\end{lem}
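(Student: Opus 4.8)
The plan is to translate everything into the variable $\xi=-\log(a-x)$, where $a:=\|X\|_\infty$, and to reduce the statement to the asymptotics of a single ratio. Write $w(x)=P(X>x)$ and $h(x)=\int_x^a w(t)\,dt$; by hypothesis $h\in C^2$ on $[0,a)$ (this is what makes $\beta$ well defined), so $w=-h'$ is $C^1$ and positive on $[0,a)$ (hence $k$ is differentiable there), and $\beta(x)=h''(x)h(x)/h'(x)^2=-w'(x)h(x)/w(x)^2\ge 0$. First I would compute directly that $k'(\xi)=-(a-x)w'(x)/w(x)$, and then record the elementary identity
\be
\beta(x)=k'(\xi)\,u(x),\qquad u(x):=\frac{h(x)}{(a-x)w(x)}=\frac{\int_x^a w(t)\,dt}{(a-x)w(x)} ,
\ee
which is immediate from the two displayed formulas for $k'$ and $\beta$. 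Thus the lemma reduces to identifying $\lim_{x\to a}u(x)$: the ``only if'' direction amounts to showing $u(x)\to 1-\beta_0$, and the ``if'' direction to showing $u(x)\to 1/(p+1)$.

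Next I would establish two complementary descriptions of $u$. Since $w$ is decreasing, $0<h(x)=\int_x^a w\le (a-x)w(x)$, so $0<u(x)\le 1$ on $[0,a)$; in particular $h(x)/w(x)=(a-x)u(x)\to 0$ as $x\to a$. Integrating the identity $\frac{d}{dx}\big(h/w\big)=\beta-1$ from $x$ to $a$ and using this vanishing (together with $\beta\ge 0$, so that $1-\beta\le 1$ and $\int_x^a(1-\beta)=h(x)/w(x)$ is finite) gives $u(x)=\frac{1}{a-x}\int_x^a\big(1-\beta(z)\big)\,dz$, i.e. $u(x)$ is the average of $1-\beta$ over $[x,a]$. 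On the other hand, the substitution $t=a-e^{-s}$ in $\int_x^a w(t)\,dt$ turns it into $\int_\xi^\infty e^{-k(s)-s}\,ds$, while $(a-x)w(x)=e^{-k(\xi)-\xi}$, so
\be
u(x)=\int_0^\infty \exp\!\big[-(k(\xi+r)-k(\xi))-r\big]\,dr .
\ee

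The two directions are then short. If $\beta(x)\to\beta_0<1$, then $1-\beta(z)\to 1-\beta_0>0$, so the averaging formula gives $u(x)\to 1-\beta_0$, and the identity yields $k'(\xi)=\beta(x)/u(x)\to\beta_0/(1-\beta_0)=:p\ge 0$. Conversely, suppose $k'(\xi)\to p$ (necessarily $p\ge 0$, since $k'(\xi)=-(a-x)w'(x)/w(x)\ge 0$). Given $\ve>0$, choose $\Xi$ with $|k'(\sigma)-p|<\ve$ for $\sigma\ge\Xi$; then $(p-\ve)r\le k(\xi+r)-k(\xi)\le(p+\ve)r$ for $\xi\ge\Xi$ and $r\ge 0$, and the integral representation squeezes $\frac{1}{p+1+\ve}\le u(x)\le\frac{1}{p+1-\ve}$ (valid once $\ve<1$, as $p\ge 0$). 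Hence $u(x)\to 1/(p+1)$, and therefore $\beta(x)=k'(\xi)u(x)\to p/(p+1)=:\beta_0<1$, with $p=\beta_0/(1-\beta_0)$, as required.

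There is no serious obstacle: the proof hinges on spotting the identity $\beta(x)=k'(\xi)u(x)$ and the trivial monotonicity bound $u\le 1$. The only points needing a line of care are the justification that $h(x)/w(x)\to 0$ before applying the fundamental theorem of calculus to obtain the averaging formula for $u$, and the attendant local integrability of $\beta$ near $a$ (both following from $\beta\ge 0$ and $u\le 1$); the uniform squeeze used in the converse direction is a routine Watson-type estimate. One could alternatively run the converse through the short linear ODE $u'(\xi)-(k'(\xi)+1)u(\xi)=-1$ satisfied by $u$, but the integral representation above seems cleanest.
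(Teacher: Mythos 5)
Your proof is correct, and at its core it follows the same route as the paper: your identity $\beta(x)=k'(\xi)u(x)$ with $u(x)=h(x)/[(\|X\|_\infty-x)w(x)]$ is exactly the paper's (\ref{WD2}), (\ref{WG2}) written with $u(x)=1/\{[\|X\|_\infty-x]q'(x)\}$, and your averaging formula $u(x)=\frac{1}{\|X\|_\infty-x}\int_x^{\|X\|_\infty}[1-\beta(z)]\,dz$ is the content of the paper's (\ref{WE2}), so the ``only if'' direction is essentially the same argument. The one real difference is the converse: where the paper integrates by parts in the $x$ variable ((\ref{WH2})) and feeds the hypothesis (\ref{WF2}) into that identity to deduce $w(x)[\|X\|_\infty-x]/h(x)\to 1+p$ in (\ref{WI2}), you pass to the $\xi$ variable, represent $u$ as $\int_0^\infty\exp[-(k(\xi+r)-k(\xi))-r]\,dr$, and squeeze using the uniform bounds $(p-\ve)r\le k(\xi+r)-k(\xi)\le(p+\ve)r$. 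Your variant is somewhat more self-contained than the paper's terse step from (\ref{WF2}), (\ref{WH2}) to (\ref{WI2}) (which tacitly requires inserting $-w'(z)[\|X\|_\infty-z]/w(z)\to p$ under the integral sign), and your explicit verification that $h(x)/w(x)\to 0$ before applying the fundamental theorem of calculus attends to a point the paper leaves implicit; beyond that the two arguments deliver the same conclusion with comparable effort.
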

\begin{proof} Using the notation of Lemma 2.1, we see that $q(\cdot)$ is an increasing function and $\lim_{x\ra \|X\|_\infty} q(x)=\infty$. We also have that 
\be \label{WC2}
\frac{1}{q'(x)}\frac{d}{dx}\left[-\log w(x)\right]= \beta(x).
\ee
Hence on defining a function $g(\cdot)$ by $g(z)=-\log w(x)$ where $z=q(x)$, we see from (\ref{WC2}) that $g'(z)=\beta(x)$. We conclude then from the definition of the function $k(\cdot)$ that 
\be \label {WD2}
k'(\xi)=g'(z)\left[ \|X\|_\infty-x\right]q'(x).
\ee
Suppose now that $\lim_{x\ra \|X\|_\infty} \beta(x)=\beta_0<1$. From (\ref{M2}) we see that 
\be \label{WE2}
q'(x) = 1 \; \big/ \;  \left[\int_x^{\|X\|_\infty} [1- \beta(x') ]dx'\right] ,
\ee
and hence that $\lim_{x\ra \|X\|_\infty} \left[ \|X\|_\infty-x\right]q'(x)=1/(1-\beta_0)$. From (\ref{WD2}) we conclude therefore that $\lim_{\xi\ra \infty} k'(\xi)=\beta_0/(1-\beta_0)$.

Conversely let us suppose that  $\lim_{\xi\ra \infty} k'(\xi)=p$, which is equivalent to the identity,
\be \label{WF2}
\lim_{x\ra \|X\|_\infty} \frac{-w'(x) \left[ \|X\|_\infty-x\right]}{w(x)}=p.
\ee 
We also have that
\be \label{WG2}
 \left[ \|X\|_\infty-x\right]q'(x)=  \frac{w(x) \left[ \|X\|_\infty-x\right]}{h(x)}.
\ee
Next observe that
the function $h(x)$ may be written as
\be \label{WH2}
h(x)=\int_x^{\|X\|_\infty} w(z) \ dz=w(x) \left[ \|X\|_\infty-x\right]+\int_x^{\|X\|_\infty} w'(z) \left[ \|X\|_\infty-z\right] dz.
\ee
It follows then from (\ref{WF2}), (\ref{WH2}) that
\be \label{WI2}
\lim_{x\ra \|X\|_\infty}  \frac{w(x) \left[ \|X\|_\infty-x\right]}{h(x)}=1+p.
\ee
Hence from (\ref{WD2}), (\ref{WG2}) we see that $\lim_{z\ra\infty} g'(z)=p/(1+p)$.
\end{proof}
\begin{proof}[Proof of Proposition 2.2] We use the fact (Lemma 5.7 of \cite{np1}) that $X$ is regularly varying with exponent $p$ if and only if the function $k(\cdot)$ of Lemma 2.4 satisfies
\be \label{WJ2}
\lim_{\xi\ra\infty} \frac{k(\xi+L)-k(\xi)}{L}=p \quad {\rm for \ all \ } L\in\R.
\ee
The result follows from Lemma 2.4. 
\end{proof}
\begin{rem} Observe that the condition (\ref{WJ2}) is only slightly weaker than the condition
 $\lim_{\xi\ra \infty} k'(\xi)=p$. Hence the condition that $X$ is regularly varying, and the condition that the beta function $\beta(x)$  for $X$ has a limit at $x= \|X\|_{\infty}$ which is strictly less than $1$, are almost equivalent.
\end{rem}
We conclude this section with some examples  which illustrate how the beta function for a random variable can oscillate and still remain bounded. The first example is taken from  \cite{cp}.
\begin{ex}
Define the function $h(\cdot)$ by $h(x)=e^{-x}[1+\ve \cos x], \ x\ge 0$. Then it is easy to see from  (\ref{F2}),  (\ref{G2}) that for $|\ve|<1/2$ the function $h(\cdot)$ corresponds to a random variable $X$ with beta function
\be \label{WK2}
\beta(x)=\frac{[1+\ve \cos x][1+2\ve \sin x]}{[1+\ve \cos x+ \ve \sin x]^2}.
\ee
Thus $\beta(\cdot)$ is an oscillatory function with period $2\pi$ which satisfies  $0<\inf\beta(\cdot)<\sup\beta(\cdot)<\infty$.
\end{ex}
\begin{ex}
Define the function $h(\cdot)$ by 
$$
h(x)= (1-x)^{p+1} \left[1+\ve (1-x)^2 \cos \left( \frac{1}{1-x} \right)\right] \quad  0\le x<1. 
$$
Then one can see again from (\ref{F2}),  (\ref{G2}) that for $p>0$ and $|\ve|$ sufficiently small depending only on $p$, that  the function $h(\cdot)$ corresponds to a random variable $X$ with $\|X\|_{\infty}=1$. The beta function for $X$  satisfies
\be \label{WL2}
\beta(x)=\frac{p}{p+1}\left[1-\frac{\ve}{p(p+1)}\cos \left( \frac{1}{1-x} \right) \right] +O[\ve(1-x)].
\ee
Thus $\beta(x)$ does not converge to a limit as $x\ra \|X\|_{\infty}$, but $0<\inf\beta(\cdot)<\sup\beta(\cdot)<1$ for $|\ve|>0$ sufficiently small.
\end{ex}

\section{Iteration of a Map}
In this section we set out the basic methodology which will be followed in the remainder of the paper.
Let $F:[0,\infty)\ra (0,\infty)$ be a $C^2$ function with the properties
\be \label{A2}
0<F'(x)<1,  \ \ F''(x)\ge 0, \quad x\ge 0; \quad \int_0^\infty [1-F'(x)] dx=\infty.
\ee
Observe that the conditions (\ref{A2}) imply that  $F(\cdot)$ has a unique fixed point, so there exists unique  $a>0$ such that $F(a)=a$. The function $F(\cdot)$  induces a mapping $T_F$
on the space $\mathbf{E}$ of integrable nonnegative monotonic decreasing functions $w:[0,\infty) \ra [0,\infty)$ as follows: 
\be \label{B2}
T_F(w)(x)=w(F(x)), \quad x\ge 0. 
\ee
Examples of functions $F$ satisfying (\ref{A2}) are 
\begin{eqnarray} \label{H2}
F(x)& = &(1-\la) + \la x, \ \ 0 < \la < 1,
\\
 \label{I2}
F(x) &= &2^{1/3} + x - (1 + x)^{1/3}.
\end{eqnarray}
Both functions in (\ref{H2}), (\ref{I2}) have the property that $F(1) = 1$.  Linear functions as in (\ref{H2}) occur in the linear version of LSW studied by Carr and Penrose \cite{carr, cp}.  The nonlinear function (\ref{I2}) is more akin to the transformations which occur for the LSW equation.  In particular $F'(x) = 1 - O\left(x^{-2/3} \right)$ for large $x$ as is the case for the LSW transformation $F(x)=F(x,t)$ described in the introduction. 

For a positive random variable $X_0$, we consider variables $X_n, \ n=1,2,..$, which are defined by multiple dilation and iteration of the variable $X_0$ as follows: 
\be \label{W2}
X_{n+1}= T_F(\lambda_nX_n), \quad n=0,1,2,...,
\ee
where the $\la_n>0, \ n=0,1,2...$, are chosen to satisfy $F(0)<\la_n\|X_n\|_\infty, \ n=0,1,2,...$.
We shall obtain a large class of variables $X_0$ for which (\ref{D2}) holds uniformly on the variables $X_n, \ n=0,1,2..$. That is for any $\al, \ 0<\al<1$, there exists a constant $C(\al, X_0)>0$ depending only on $\al$ and $X_0$ such that
\be \label{E2}
 <X_n^\alpha> \ \ge \ C(\al, X_0)<X_n>^\al, \quad n=0,1,2...
 \ee

For a positive random variable $X$  with finite first moment  which satisfies $F(0)<\|X\|_\infty$, the transformation $T_F$ on $X$ induces a corresponding transformation on its $\beta(\cdot)$ function. We denote this transformation also by $T_F$. It is given explicitly by the formula,  
\be \label{K2}
T_F\beta(x)= \beta(F(x)) F'(x) \int^\infty_{F(x)} w(z) \; \frac{dz}{ F'(F^{-1}(z))} \; \bigg/ \int^\infty_{F(x)} \; w(z)dz.
\ee
Since $F$ is convex one concludes from (\ref{K2})  that 
\be \label{L2}
T_F\beta(x) \le \beta(F(x)), \quad  0\le x <\|X\|_\infty, 
\ee
with equality in the case of linear $F$.  Thus when $F$ is linear the constant function is a fixed point of $T_F$, whence the functions (\ref{J2}) are invariant solutions to the linearized LSW equations \cite{carr,cp}.

The usefulness of the $\beta(\cdot)$ function comes from the inequality (\ref{L2}). 
Evidently on combining (\ref{L2}) with Lemma 2.1 we may conclude the following: 
\begin{proposition}
Suppose the function $F:[0,\infty)\ra (0,\infty)$  satisfies the conditions  (\ref{A2}) and $X_0$ is a positive random variable with bounded $\beta(\cdot)$ function. Then if the dilations  $\la_n$  in (\ref{W2}) are chosen to satisfy $F(0)<\la_n\|X_n\|_\infty, \ n=0,1,2..$,  the inequality (\ref{E2}) holds.
\end{proposition}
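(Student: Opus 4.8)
The plan is to prove by induction on $n$ that the beta function $\beta_n(\cdot)$ associated with $X_n$ satisfies $\sup\beta_n(\cdot)\le\sup\beta_0(\cdot)=:\beta_\infty$ for every $n$, and then to apply Lemma 2.1 to each $X_n$ with this single common bound. Since $\beta_\infty<\infty$ by hypothesis and depends only on $X_0$, Lemma 2.1 then gives, for each fixed $\al$ with $0<\al<1$, the estimate $\langle X_n^\al\rangle\ge C_1(\beta_\infty)^\al\langle X_n\rangle^\al$ simultaneously for all $n=0,1,2,\dots$, which is precisely (\ref{E2}) with $C(\al,X_0)=C_1(\beta_\infty)^\al$.

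For the base case $n=0$ there is nothing to prove. For the inductive step, first invoke the dilation invariance of the beta function noted after (\ref{G2}): the variable $\la_nX_n$ has beta function $x\mapsto\beta_n(x/\la_n)$, so the supremum of the latter over its domain $[0,\la_n\|X_n\|_\infty)$ equals $\sup\beta_n(\cdot)$. By the choice of the dilations, $F(0)<\la_n\|X_n\|_\infty=\|\la_nX_n\|_\infty$, so $T_F$ induces the transformation (\ref{K2}) on the beta function of $\la_nX_n$, and since $F$ is convex by (\ref{A2}) the key inequality (\ref{L2}) applies:
\[
\beta_{n+1}(x)\ \le\ \beta_n\!\bigl(F(x)/\la_n\bigr),\qquad 0\le x<\|X_{n+1}\|_\infty .
\]
Moreover (\ref{A2}) gives $F'(x)\ge F'(0)>0$ together with $F''\ge0$, hence $F(x)\to\infty$, so $F$ is a continuous increasing bijection of $[0,\infty)$ onto $[F(0),\infty)$ and it maps $[0,\|X_{n+1}\|_\infty)$ into $[F(0),\la_n\|X_n\|_\infty)$, the domain of the beta function of $\la_nX_n$. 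Taking the supremum over $x$ in the displayed inequality therefore gives $\sup\beta_{n+1}(\cdot)\le\sup\beta_n(\cdot)\le\beta_\infty$, which closes the induction and, by the previous paragraph, completes the proof.

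The argument is short because the substance is already contained in (\ref{L2}) and Lemma 2.1; the one thing that needs attention is keeping the beta functions well defined along the whole iteration, i.e.\ keeping each tail integral $h_n$ of class $C^2$, equivalently each survival function $w_n$ of class $C^1$. This holds at $n=0$ because the standing hypothesis that $X_0$ has a bounded beta function presupposes it, and it is preserved by the iteration: the map $w\mapsto w\circ F$ with $F\in C^2$ sends $C^1$ survival functions to $C^1$ survival functions, and rescaling the variable by a constant trivially preserves regularity. One must also confirm that each domain $[0,\|X_{n+1}\|_\infty)$ is nonempty, which is exactly what $F(0)<\la_n\|X_n\|_\infty$ ensures. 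I expect this bookkeeping, rather than any genuine estimate, to be the only obstacle.
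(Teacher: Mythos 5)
Your argument is correct and is essentially the paper's own proof: Proposition 3.1 is deduced there in one line from the convexity inequality (\ref{L2}) and Lemma 2.1, which is exactly the induction $\sup\beta_{n+1}(\cdot)\le\sup\beta_n(\cdot)\le\beta_\infty$ plus the uniform constant $C_1(\beta_\infty)^\al$ that you spell out. Your extra bookkeeping on domains, dilation invariance, and regularity is sound and simply makes explicit what the paper leaves implicit.
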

\begin{proof}
The result follows from Lemma 2.1 and (\ref{L2}).
\end{proof}

Let $X_0$ be a positive random variable and the variables $X_n, \ n=1,2,..$, be defined by (\ref{W2}). We are interested in identifying variables $X_0$ for which
\be \label{R2}
 <X_n^\alpha> \ \le \ [1-\eta(\al,X_0)]<X_n>^\al, \quad n=0,1,2...,
 \ee
 where $0<\al<1$ and $\eta(\al,X_0)$ is a positive constant depending only on $\al$ and $X_0$. The following lemma will enable us to  show that if the $\beta(\cdot)$ function for $X_0$ is bounded away from zero and $\sup\beta(\cdot)$ is sufficiently small then (\ref{R2}) holds. 
 \begin{lem}  Let $F : [0,\infty) \ra (0,\infty)$ satisfy (\ref{A2})  and $\ga_F = \sup\{ x \ge 0 : xF'(x) \le F(x) \}$. Assume $X$ is a positive random variable  with associated $\beta$ function $\beta(\cdot)$,  and that $F(0)<\|X\|_\infty$. Then for any $\ga$, $0 < \ga < \ga_F$, there is a positive continuous function $g_\ga : [0,1] \ra (0,1]$ with $g_\ga(1)=1$ which for any $\nu > 0$ has the property:
\be \label{S2}
\beta(x) \ge \nu g_\ga(x/\|X\|_\infty), \quad 0 \le x < \|X\|_\infty, 
\ee
and $\|X\|_\infty \le F(\ga)$ implies
\[	T_F\beta(x) \ge \nu g_\ga(x/\|T_F(X)\|_\infty), \quad  0 \le x < \|T_F(X)\|_\infty.  \]
\end{lem}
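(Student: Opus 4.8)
The plan is to reduce the statement, via the transformation formula (\ref{K2}), to a single functional inequality for $g_\gamma$, and then to solve that inequality with an explicit exponential weight; the hypothesis $\gamma<\gamma_F$ will enter only through a geometric‑contraction estimate.

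First I would rewrite (\ref{K2}) more transparently. Put $b=\|X\|_\infty$ and $b'=\|T_F(X)\|_\infty=F^{-1}(b)$. Since $w$ vanishes on $[b,\infty)$, the substitution $z=F(y)$ turns the two integrals in (\ref{K2}) into integrals over $[x,b']$, giving
\[
T_F\beta(x)=\beta(F(x))\,\frac{F'(x)\int_x^{b'}w(F(y))\,dy}{\int_x^{b'}w(F(y))\,F'(y)\,dy},\qquad 0\le x<b'.
\]
Because $F$ is convex, $F'$ is nondecreasing, so $F'(y)\le F'(b')$ on $[x,b']$, whence the fraction is $\ge F'(x)/F'(b')$; combining with the hypothesis (\ref{S2}) at the point $F(x)\in[0,b)$ gives $T_F\beta(x)\ge\nu\,g_\gamma(F(x)/b)\,F'(x)/F'(b')$. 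Now the extra hypothesis $b\le F(\gamma)$ is exactly $b'\le\gamma$; writing $u=x/b'\in[0,1)$ and $s=b'\in(0,\gamma]$ we have $F(x)/b=F(us)/F(s)$ and $F'(x)/F'(b')=F'(us)/F'(s)$, so the proof is complete once I produce a positive continuous $g_\gamma:[0,1]\to(0,1]$ with $g_\gamma(1)=1$ such that
\[
g_\gamma\!\left(\frac{F(us)}{F(s)}\right)\frac{F'(us)}{F'(s)}\ \ge\ g_\gamma(u)\qquad\text{for all }u\in[0,1),\ s\in(0,\gamma].
\]
It is essential here not to weaken $F'(b')$ to $F'(\gamma)$: that loses too much when $b'$ is small, and the resulting functional inequality would have no solution.

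For the functional inequality I would use two one‑line estimates. Since $\frac{d}{dx}\big(F(x)-xF'(x)\big)=-xF''(x)\le0$, the function $F-xF'$ is nonincreasing, so $F(x)-xF'(x)\ge c_\gamma:=F(\gamma)-\gamma F'(\gamma)$ on $[0,\gamma]$, and $c_\gamma>0$ because $\gamma<\gamma_F$. From $\frac{d}{dx}\big(F(x)/x\big)=-[F(x)-xF'(x)]/x^2$, integrating between $us$ and $s$ gives $F(us)-uF(s)\ge c_\gamma(1-u)$, so, dividing by $F(s)\le F(\gamma)$,
\[
\frac{F(us)}{F(s)}-u\ \ge\ \frac{c_\gamma}{F(\gamma)}\,(1-u).
\]
On the other hand, with $M_\gamma:=\max_{[0,\gamma]}F''<\infty$ (finite since $F\in C^2$) and $F'(0)>0$,
\[
-\log\frac{F'(us)}{F'(s)}=\int_{us}^{s}\frac{F''(t)}{F'(t)}\,dt\ \le\ \frac{1}{F'(0)}\int_{us}^{s}F''(t)\,dt\ \le\ \frac{M_\gamma\gamma}{F'(0)}\,(1-u).
\]
Then I would simply take $g_\gamma(u)=\exp\!\big(-C_\gamma(1-u)\big)$ with $C_\gamma:=M_\gamma\gamma F(\gamma)/\big(c_\gamma F'(0)\big)$: this is positive, continuous, maps $[0,1]$ to $(0,1]$, and $g_\gamma(1)=1$; and after taking logarithms the functional inequality becomes $C_\gamma\big(F(us)/F(s)-u\big)\ge-\log\big(F'(us)/F'(s)\big)$, which is immediate from the two displays, the left side being $\ge\frac{M_\gamma\gamma}{F'(0)}(1-u)$.

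The only step that needs real thought is the first estimate: one must recognise that $\gamma<\gamma_F$ is precisely what provides the geometric contraction $F(us)/F(s)-u\ge\text{const}\cdot(1-u)$, and that this contraction is exactly strong enough for an exponential weight to absorb the deficiency $F'(us)/F'(s)<1$ forced by convexity of $F$; the rest is bookkeeping. (The borderline case $c_\gamma=0$, where $F$ is linear through the origin on a subinterval reaching $\gamma$, is degenerate and would be treated separately, or excluded by reading $\gamma_F$ with a strict inequality.)
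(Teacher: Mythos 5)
Your proof is correct, and while it begins with the same reduction as the paper, the key step is handled by a genuinely different argument. Like the paper, you use (\ref{K2}) together with the monotonicity of $F'$ (convexity) to bound $T_F\beta(x)\ge \nu\, g_\ga\bigl(F(x)/\|X\|_\infty\bigr)F'(x)/F'(\|T_FX\|_\infty)$, reducing everything to the self-consistency inequality (\ref{T2}) for $g_\ga$. Where you diverge is in producing $g_\ga$: the paper argues perturbatively, expanding (\ref{T2}) to first order at the endpoint $x=\|T_F(X)\|_\infty$ (condition (\ref{U2}), which forces $g_\ga'(1)$ large), invoking the uniform gap (\ref{V2}) away from the endpoint, and then asserting that a suitable $g_\ga$ "can be constructed"; you instead exhibit $g_\ga(u)=\exp[-C_\ga(1-u)]$ explicitly and verify (\ref{T2}) globally from two clean estimates, $F(us)-uF(s)\ge c_\ga(1-u)$ (which is exactly where $\ga<\ga_F$ enters, through positivity of $F-xF'$ on $[0,\ga]$ -- the same mechanism as in (\ref{U2})--(\ref{V2})) and $-\log[F'(us)/F'(s)]\le M_\ga\ga(1-u)/F'(0)$. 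Your version buys an explicit constant and makes the uniformity of $g_\ga$ over all admissible $\|X\|_\infty\in(F(0),F(\ga)]$ transparent, whereas the paper's endpoint expansion leaves the patching of the local and global regimes, and its uniformity in $\|T_F(X)\|_\infty$, to the reader; it is also honest of you to flag the borderline case $c_\ga=F(\ga)-\ga F'(\ga)=0$ (where $F$ is linear through the origin on an interval reaching $\ga$), since the paper's proof implicitly excludes it as well when it asserts the strict inequality $xF'(x)<F(x)$ on $[0,\ga]$ and the strict decrease of $F(x)/x$; in the LSW application this case does not occur because (\ref{N1}) gives strict convexity. So the two proofs rest on the same structural facts, but yours replaces an existence-by-perturbation argument with an explicit quantitative construction.
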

\begin{proof}    We note that since the function $F(x) - xF'(x)$ is decreasing one has $xF'(x) < F(x), \ 0 \le x \le \ga$.  From (\ref{K2}) and the assumption (\ref{S2}) on $\beta$ we have that
\[   T_F\beta(x) \ge \nu g_\ga \left( F(x) / \|X\|_\infty \right) F'(x) \; \big/ F'(\|T_FX\|_\infty),  \ 
\ 0 \le x < \|T_F(X)\|_\infty. \]
Hence it is sufficient to construct the function $g_\ga$ to satisfy
\be \label{T2}
	g_\ga \left( F(x)/\|X\|_\infty\right) F'(x) \; \big/ F'( \|T_F(X)\|_\infty) \ge g_\ga\left( x \; \big/ \|T_FX\|_\infty\right), 
 \ \ 0 \le x < \|T_F(X)\|_\infty.   
\ee
Since we obtain equality in (\ref{T2}) if we set $x=\|T_F(X)\|_\infty$ we need to do a Taylor expansion around this point.  Thus to first order in $\|T_F(X)\|_\infty -x$ the inequality (\ref{T2}) becomes
\begin{multline} \label{U2}
g_\ga(1) F'' \left( \|T_F(X)\|_\infty \right) \le \\
g'_\ga(1) F'\left( \|T_F(X)\|_\infty\right) \big[ \|X\|_\infty - \|T_F(X)\|_\infty F' \left( \|T_F(X)\|_\infty\right) \big] \; \big/ \|X\|_\infty \|T_F(X)\|_\infty.
\end{multline}
By choosing $g'_\ga(1)$ sufficiently large depending only on $\ga$ we obtain strict inequality in (\ref{U2}), whence (\ref{T2}) holds for $x$ close to $\|T_F(X)\|_\infty$.  Observe also that for any $\ve > 0$ there exists $\del > 0$ depending on $\ve$ and $a$ such that 
\be \label{V2}
F(x) \; \big/ \|X\|_\infty \ge x /\|T_F(X)\|_\infty + \del, \quad 0 \le x \le \|T_F(X)\|_\infty - \ve.
\ee
This follows from the fact that $F(x)/x$ is a strictly decreasing function, $0 < x \le \ga$.  In view of (\ref{U2}), (\ref{V2}) one can construct the function $g_\ga$ which satisfies (\ref{T2}). 
\end{proof}
\begin{proposition}
Suppose the function $F:[0,\infty)\ra (0,\infty)$  satisfies the conditions  (\ref{A2}) and $X_0$ is a positive random variable with $\beta(\cdot)$ function satisfying $\inf[\beta(\cdot)]>0$. Assume further that the dilations  $\la_n$ in (\ref{W2}) are chosen to satisfy $F(0)<\la_n\|X_n\|_\infty \le F(\ga) \ n=0,1,2..$, where $\ga$ is as in Lemma 3.1. Then the inequality (\ref{R2}) holds for $0<\al<1$ and some $\eta(\al,X_0)>0$.
\end{proposition}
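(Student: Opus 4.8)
The plan is to derive the proposition from Lemma 3.1 and Lemma 2.3 by showing that a single lower bound of the form $\beta_n(x)\ge\nu\,g_\ga(x/\|X_n\|_\infty)$ survives the iteration (\ref{W2}), with the constant $\nu>0$ and the function $g_\ga$ of Lemma 3.1 fixed once and for all; here $\beta_n$ denotes the $\beta$ function of $X_n$. Take $\nu=\inf\{\beta_0(\cdot)\}$, which is positive by hypothesis, and let $g_\ga:[0,1]\ra(0,1]$ be the function furnished by Lemma 3.1 for the given $\ga$, recalling that $g_\ga$ depends only on $F$ and $\ga$, not on the variable and not on $\nu$. Since $g_\ga\le 1$, the base case $\beta_0(x)\ge\nu\ge\nu\,g_\ga(x/\|X_0\|_\infty)$ holds for $0\le x<\|X_0\|_\infty$.

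For the inductive step, assume $\beta_n(x)\ge\nu\,g_\ga(x/\|X_n\|_\infty)$ on $[0,\|X_n\|_\infty)$. The $\beta$ function is invariant under dilation, so writing $Y_n=\la_n X_n$ one has $\beta_{Y_n}(x)=\beta_n(x/\la_n)$ and $\|Y_n\|_\infty=\la_n\|X_n\|_\infty$; consequently the inductive hypothesis is exactly the statement that $Y_n$ satisfies (\ref{S2}) with the same $\nu$ and $g_\ga$. By assumption $F(0)<\la_n\|X_n\|_\infty\le F(\ga)$, so $Y_n$ also satisfies $F(0)<\|Y_n\|_\infty$ and $\|Y_n\|_\infty\le F(\ga)$, and Lemma 3.1 applies to $X=Y_n$. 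Since $T_F\beta_{Y_n}$ is the $\beta$ function of $T_F(Y_n)=X_{n+1}$, the conclusion of Lemma 3.1 reads $\beta_{n+1}(x)\ge\nu\,g_\ga(x/\|X_{n+1}\|_\infty)$ on $[0,\|X_{n+1}\|_\infty)$, which completes the induction.

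Because $g_\ga$ is continuous and strictly positive on the compact interval $[0,1]$, the number $c_\ga=\min_{0\le y\le 1}g_\ga(y)$ is positive and depends only on $F$ and $\ga$. The bound just established therefore gives $\inf\{\beta_n(\cdot)\}\ge\nu\,c_\ga>0$ for every $n\ge 0$, uniformly in $n$. Applying Lemma 2.3 with its parameter $\beta_0$ set equal to $\nu\,c_\ga$ yields, for each $\al$ with $0<\al<1$, the inequality (\ref{AC2}) for $X=X_n$ with a constant $\eta_0(\al,\nu\,c_\ga)>0$ that is independent of $n$; setting $\eta(\al,X_0)=\eta_0(\al,\nu\,c_\ga)$ — which depends only on $\al$ and on $\nu=\inf\{\beta_0(\cdot)\}$, hence only on $\al$ and $X_0$ once $F$ and $\ga$ are fixed — establishes (\ref{R2}). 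There is no real obstacle beyond keeping the comparison data $(\nu,g_\ga)$ independent of $n$; the only place this is used is in invoking Lemma 3.1 at every step, which is precisely why the hypothesis imposes the uniform upper bound $\la_n\|X_n\|_\infty\le F(\ga)$ and not merely $F(0)<\la_n\|X_n\|_\infty$.
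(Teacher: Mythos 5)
Your proof is correct and follows essentially the same route as the paper: the paper's one-line proof (``follows from Proposition 2.1 and Lemma 3.1'') is exactly your induction propagating the bound $\beta_n(x)\ge\nu\,g_\ga(x/\|X_n\|_\infty)$ via Lemma 3.1, combined with the uniform positive lower bound $\nu\min g_\ga$ and Lemma 2.3 (Proposition 2.1(b)). You have merely made explicit the details the paper leaves implicit, namely the dilation invariance of the $\beta$ function and the positivity of $\min_{[0,1]}g_\ga$.
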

\begin{proof}
Note that the conditions (\ref{A2}) imply that $F(0)<a_F <F(\ga_F)$, where $a_F$ is the fixed point for $F(\cdot)$. The result follows now from Proposition 2.1  and Lemma 3.1.
\end{proof}
The dilations $\la_n, \ n=0,1,2...$ in (\ref{W2}) can be considered as ``normalizations" of the random variables $X_n, \ n=0,1,2..$. There are various ways of choosing normalizations. The normalization corresponding to the LSW equation is
\be \label{X2}
<(\la_n X_n)^\rho>^{1/\rho}=K(\rho), \quad n=0,1,2...,
\ee
with $\rho=1/3$. We shall show that if $\sup\beta(\cdot)$ is sufficiently small then the $\la_n, \ n=0,1,2,...$, determined by (\ref{X2}) satisfy the conditions of Propositions 3.1 and 3.2.
\begin{lem} Suppose $X_0$ is a positive random variable with finite first moment and bounded $\beta(\cdot)$ function, which also satisfies (\ref{X2}) for $n=0$ and some  $ \la_0, \rho$ satisfying $ \lambda_0>0, \ 0<\rho\le1$. Suppose further that $K(\rho)$ satisfies the inequality  $F(0)<K(\rho) < F(\ga)$, where $\ga$ is as in Lemma 3.1. Then there exists $\beta_\infty>0$ depending on $\rho$ such that if 
$\sup\beta(\cdot)\le \beta_\infty$, there is the inequality  $F(0)<\la_0 \|X_0\|_\infty < F(\ga)$.
\end{lem}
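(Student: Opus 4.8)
The plan is to estimate the product $\la_0\|X_0\|_\infty$ directly. Since the normalization $(\ref{X2})$ at $n=0$ simply reads $\la_0=K(\rho)\big/<X_0^\rho>^{1/\rho}$, we have
$$\la_0\|X_0\|_\infty=K(\rho)\,\frac{\|X_0\|_\infty}{<X_0^\rho>^{1/\rho}},$$
so everything reduces to controlling the ratio $\|X_0\|_\infty\big/<X_0^\rho>^{1/\rho}$. It is always at least $1$: by Jensen's inequality $(\ref{C2})$ and $\rho\le 1$ one has $<X_0^\rho>^{1/\rho}\le <X_0>\le\|X_0\|_\infty$, which already gives the lower half of the claim, $\la_0\|X_0\|_\infty\ge K(\rho)>F(0)$, with no smallness needed. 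For the upper half I would show the ratio can be made as close to $1$ as we like by taking $\sup\beta(\cdot)$ small, and then invoke the \emph{strict} inequality $K(\rho)<F(\ga)$.

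For the upper bound I would first take $\beta_\infty<1$, so that (as observed after $(\ref{J2})$) $X_0$ has compact support, $\|X_0\|_\infty<\infty$. Writing $D(x)=<X_0>-x+\int_0^x\beta(z)\,dz$ for the denominator in $(\ref{M2})$ (recall $1/q'(0)=<X_0>$), the bounds $0\le\beta(\cdot)\le\beta_\infty$ give $<X_0>-x\le D(x)\le <X_0>-(1-\beta_\infty)x$. Since $D(x)>0$ on all of $[0,\|X_0\|_\infty)$, the right-hand bound forces
$$\|X_0\|_\infty\le\frac{<X_0>}{1-\beta_\infty}.$$
The main step is then a quantitative concentration estimate for the law of $X_0$ near its mean, extracted from the explicit formula $(\ref{O2})$: using $D(z)\ge <X_0>-z$ inside the exponential and $D(x)\le <X_0>-(1-\beta_\infty)x$ in the prefactor, one gets for $0\le x<<X_0>$, with $s=x/<X_0>$,
$$P(X_0>x)=\frac{<X_0>}{D(x)}\exp\left[-\int_0^x\frac{dz}{D(z)}\right]\ \ge\ \frac{1-s}{1-(1-\beta_\infty)s}.$$

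Plugging this into $<X_0^\rho>=\int_0^\infty\rho x^{\rho-1}P(X_0>x)\,dx$ and substituting $x=s<X_0>$ yields
$$<X_0^\rho>\ \ge\ <X_0>^\rho\int_0^1\rho s^{\rho-1}\,\frac{1-s}{1-(1-\beta_\infty)s}\,ds .$$
The integrand is bounded above by $\rho s^{\rho-1}$ (integrable on $[0,1]$ with integral $1$) and tends pointwise to $\rho s^{\rho-1}$ as $\beta_\infty\to 0$, so by dominated convergence — or, for an explicit rate, by splitting the integral at $s=1-\sqrt{\beta_\infty}$ and using $P(X_0>x)\ge 1-\sqrt{\beta_\infty}$ on the lower piece — there is a constant $c(\beta_\infty,\rho)$, depending only on $\beta_\infty$ and $\rho$, with $c(\beta_\infty,\rho)\to 1$ as $\beta_\infty\to 0$, such that $<X_0^\rho>^{1/\rho}\ge c(\beta_\infty,\rho)<X_0>$. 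Combining this with the bound on $\|X_0\|_\infty$,
$$\frac{\|X_0\|_\infty}{<X_0^\rho>^{1/\rho}}\ \le\ \frac{1}{(1-\beta_\infty)\,c(\beta_\infty,\rho)}\ =:\ R(\beta_\infty,\rho),$$
and $R(\beta_\infty,\rho)\to 1$ as $\beta_\infty\to 0$ for fixed $\rho$. Since $K(\rho)<F(\ga)$ strictly, choose $\beta_\infty=\beta_\infty(\rho)\in(0,1)$ so small that $R(\beta_\infty,\rho)\,K(\rho)<F(\ga)$; then $\sup\beta(\cdot)\le\beta_\infty$ gives $F(0)<K(\rho)\le\la_0\|X_0\|_\infty\le R(\beta_\infty,\rho)\,K(\rho)<F(\ga)$, as required.

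The step I expect to be the main obstacle is the concentration estimate. The reverse Jensen inequality of Lemma 2.1 only supplies a constant of the form $C_1(\beta_\infty)^\rho$ with no information on its limit, so one must go back to the representation $(\ref{O2})$ and show that $P(X_0>x)$ converges — \emph{uniformly over all admissible $X_0$} — to the indicator of $[0,<X_0>]$ as $\sup\beta(\cdot)\to 0$. The two-sided control of $D(x)$ is exactly what delivers this, and it is also what yields the companion inequality $\|X_0\|_\infty\le <X_0>/(1-\beta_\infty)$ needed to close the argument.
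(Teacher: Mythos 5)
Your proof is correct and follows essentially the same route as the paper: both start from the explicit representation (\ref{O2}) with the two-sided control $0\le\beta(\cdot)\le\beta_\infty$ of the denominator, conclude that $\|X_0\|_\infty\big/\langle X_0^\rho\rangle^{1/\rho}\to 1$ uniformly as $\beta_\infty\to 0$ (your bound $\|X_0\|_\infty\le\langle X_0\rangle/(1-\beta_\infty)$ is exactly the paper's (\ref{AB2})), and then invoke the strict inequality $K(\rho)<F(\ga)$. The only difference is bookkeeping: the paper lower-bounds $w(x)/w(0)$ by a constant tending to $1$ on $[0,(1-\del)\|X_0\|_\infty]$ (inequalities (\ref{Z2})--(\ref{AA2})), whereas you integrate the explicit lower bound $(1-s)/[1-(1-\beta_\infty)s]$ for $P(X_0>x)$ over $[0,\langle X_0\rangle)$.
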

\begin{proof}
Evidently $X=\la_0 X_0$ satisfies $\|X\|_\infty> K(\rho)$, so we just need to show that $\|X\|_\infty< F(\ga)$. If we take $\beta_\infty = 0$ then $\|X\|_\infty=K(\rho)$ in which case the result follows. We thus need to show that by taking $\beta_\infty<1$ sufficiently small we can still achieve 
$\|X\|_\infty < F(\ga)$. To do this let us put
\[	f(x) = x - \int^x_0 \beta(x')dx', \ \ 0 \le x <  \|X\|_\infty,	\]
so $f$ is a monotonic increasing function and $w(x)$ is given by the formula
\be \label{Y2}
w(x) = \exp \left[ - \int^x_0 \; dz /[f(\|X\|_\infty)) - f(z)] \right] \bigg/ [f(\|X\|_\infty) - f(x)].
\ee
If we use now the inequality,
\[	f(\|X\|_\infty) - f(z) \ge f(\|X\|_\infty) - f(x) + (1 - \beta_\infty) (x-z), \quad  0 \le z \le x,	\]
we may conclude from (\ref{Y2}) that $w(x)$ is bounded below as
\be \label{Z2}
w(x) \ge \left[f(\|X\|_\infty) - f(x)\right]^{\beta_\infty /(1-\beta_\infty)} \bigg/  \left[f(\|X\|_\infty) - f(x) +(1-\beta_\infty)x \right]^{1/(1-\beta_\infty)}, \quad 0 \le x < \|X\|_\infty.
\ee
We have now from (\ref{Y2}) that $w(0) = 1/f(\|X\|_\infty)$.  If we then use the inequalities,
\begin{eqnarray*}
f(\|X\|_\infty) - f(x) &\ge& (1 - \beta_\infty) \del \ \|X\|_\infty, \ \ \ 0 \le x \le (1-\del)\|X\|_\infty, \\
f(\|X\|_\infty) - f(x) &+& (1-\beta_\infty)x \le f(\|X\|_\infty), \ \ \ 0 \le x < \|X\|_\infty,
\end{eqnarray*}
we can conclude from (\ref{Z2}) that $w(x)$ satisfies the inequality,
\be \label{AA2}
 w(x) \ge \left[ (1-\beta_\infty)\del \  \|X\|_\infty \; \big/ f(\|X\|_\infty)\right]^{\beta_\infty /(1-\beta_\infty)} w(0), \
 \quad 0\le x\le (1-\del)\|X\|_\infty.
\ee
We evidently also have the inequalities,
\be \label{AB2}
[1 - \beta_\infty]\|X\|_\infty \le f(\|X\|_\infty) \le \|X\|_\infty.
\ee
The result follows from (\ref{AA2}), (\ref{AB2}) since they imply that for any $\ve>0$ there exists $\beta_\infty>0$ such that if $\sup\beta(\cdot)\le \beta_\infty$, then
$$
<X^\rho>=\rho \int_0^\infty x^{\rho-1} w(x) dx/w(0) \ge (1-\ve) \|X\|_\infty^\rho.
$$
\end{proof}
\begin{corollary} 
Suppose the function $F:[0,\infty)\ra (0,\infty)$  satisfies the conditions  (\ref{A2}) and $X_0$ is a positive random variable with $\beta(\cdot)$ function satisfying $0<\inf[\beta(\cdot)]\le \sup[\beta(\cdot)]=\beta_\infty$. Assume the dilation parameters $\la_n, \ n=0,1,2...$, in (\ref{W2})  are  determined by the normalization condition (\ref{X2}) for some $\rho, \ 0<\rho\le 1$, where $F(0)<K(\rho)<F(\ga)$, with $\ga$ as in Lemma  3.1.   Then if $\beta_\infty>0$ is sufficiently small, the inequalities (\ref{E2}) and (\ref{R2}) hold for all $n=0,1,2...$, and $\al$ satisfying $0<\al<1$. 
\end{corollary}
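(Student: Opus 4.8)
The plan is to derive the Corollary from Lemma 3.2 and Propositions 3.1--3.2 by a short induction on $n$; the role of the induction is simply to check that the dilation parameters $\la_n$ produced by the normalization (\ref{X2}) always fall in the admissible window required by those results. First I would fix the threshold $\beta_\infty>0$ provided by Lemma 3.2 (it depends only on $\rho$), shrinking it if necessary so that $\beta_\infty<1$, and assume $\sup\beta(\cdot)\le\beta_\infty$. The point worth emphasizing is that, because $\rho$ and hence $K(\rho)$ are the same at every stage of the iteration (\ref{W2}), this one threshold $\beta_\infty$ is what must be reused at each step.

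The inductive claim I would carry is: for every $n\ge0$, the variable $X_n$ has compact support (hence a finite first moment and a finite $\rho$-th moment, so that $\la_n$ is well defined by (\ref{X2})), its $\beta$ function $\beta_n(\cdot)$ satisfies $\sup\beta_n(\cdot)\le\beta_\infty$, and $F(0)<\la_n\|X_n\|_\infty<F(\ga)$. For $n=0$ this is immediate: $X_0$ has a finite first moment by hypothesis and compact support because $\sup\beta_0(\cdot)<1$ (the remark after (\ref{J2}) together with the argument of Lemma 2.1), it satisfies (\ref{X2}) at level $0$ by definition of $\la_0$, and since $F(0)<K(\rho)<F(\ga)$ Lemma 3.2 gives $F(0)<\la_0\|X_0\|_\infty<F(\ga)$. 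For the step from $n$ to $n+1$: since $\ga<\ga_F$ and $F$ is strictly increasing one has $F(\ga)<\sup_{x\ge0}F(x)$, so $F(0)<\|\la_nX_n\|_\infty<F(\ga)$ puts $\|\la_nX_n\|_\infty$ strictly inside the range of $F$; thus there is a finite $x^\ast$ with $F(x^\ast)=\|\la_nX_n\|_\infty$, and $X_{n+1}=T_F(\la_nX_n)$ is supported in $[0,x^\ast)$, in particular has a finite first moment. Because $\beta$ is invariant under dilation, inequality (\ref{L2}) gives $\sup\beta_{n+1}(\cdot)=\sup T_F\beta_n(\cdot)\le\sup\beta_n(\cdot)\le\beta_\infty$, and $X_{n+1}$ satisfies (\ref{X2}) at level $n+1$ by definition of $\la_{n+1}$; so Lemma 3.2 applies once more and gives $F(0)<\la_{n+1}\|X_{n+1}\|_\infty<F(\ga)$. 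This closes the induction.

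Once the induction is in place, $F(0)<\la_n\|X_n\|_\infty\le F(\ga)$ holds for all $n$, which is precisely the condition on the dilations required in Propositions 3.1 and 3.2 (with $\ga$ the constant of Lemma 3.1, the same one used throughout). Since $X_0$ has a bounded $\beta$ function, Proposition 3.1 delivers (\ref{E2}); since in addition $\inf\beta_0(\cdot)>0$, Proposition 3.2 delivers (\ref{R2}). I expect the only genuine obstacle, as opposed to bookkeeping, to be showing that the window $(F(0),F(\ga))$ is respected \emph{uniformly} in $n$ rather than just at $n=0$; this is exactly the place where convexity of $F$ is used, through $\sup T_F\beta\le\sup\beta$ in (\ref{L2}), which keeps $\sup\beta_n(\cdot)$ from growing and so lets Lemma 3.2 be invoked with the fixed threshold $\beta_\infty$ at every stage.
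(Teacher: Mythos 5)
Your argument is correct and is essentially the paper's proof: the paper simply states that the corollary ``follows directly from Propositions 3.1, 3.2 and Lemma 3.2,'' and your induction on $n$ — using dilation invariance of $\beta$, the monotonicity $\sup T_F\beta_n\le\sup\beta_n$ from (\ref{L2}), and Lemma 3.2 with its fixed threshold $\beta_\infty(\rho)$ to keep $\la_n\|X_n\|_\infty$ in the window $(F(0),F(\ga))$ — is exactly the bookkeeping that statement leaves implicit. Nothing in your write-up deviates from or adds a gap to that route, so it stands as a faithful expansion of the paper's argument.
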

\begin{proof} Follows directly from Propositions 3.1, 3.2 and Lemma 3.2. 
\end{proof}

\section{The LSW Iteration}

 Our first goal in this section will be to give a new proof  for global existence of solutions to (\ref{A3}), (\ref{B3}) by using the beta function (\ref{G2}) introduced in $\S2$. Other more general proofs of global existence for solutions have been given in \cite{cg,np2,np3}.
\begin{lem}  Suppose the initial data  for the LSW system (\ref{A3}), (\ref{B3}) is a non-negative decreasing integrable function $w_0$ satisfying $L(0) = 1$, where $L(t)$ is given by (\ref{C3}).  Further, suppose $\beta_0(\cdot)$ is the beta function derived from $w_0(\cdot)$ by (\ref{F2}), (\ref{G2}) with $w(\cdot)=w_0(\cdot)$, and that $\sup\beta_0(\cdot) \le \beta_\infty<\infty$.  For $\ve,\del > 0$ let $\mathbf{E}_{\ve,\del}$ be the space of continuous functions $L(t), \ 0 \le t \le \del$, with uniform norm which satisfies $L(0)=1, \ (1+\ve)^{-1} \le L(t) \le 1 + \ve, \ 0 \le t \le \del$.  For $L \in \mathbf{E}_{\ve,\del}$ define $T(L)$ by the RHS of (\ref{C3}).  Thus
\be \label{F3}
\left[ T(L)(t)\right]^{1/3} = \frac 1 3 \ \int^\infty_0 x^{-2/3} w(x,t) dx \; \big/ w(0,t), \ \ 0 \le t \le \del,
\ee
where $w(x,t)$ is a solution to (\ref{A3}) with $w(x,0) = w_0(x)$.  Then for $\del$ sufficiently small depending only on $\beta_\infty$, there exists $\ve > 0$ such that $T$ is a contraction on $\mathbf{E}_{\ve,\del}$.
\end{lem}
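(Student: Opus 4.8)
The plan is to realize $T$ as a map obtained by solving the linear transport equation (\ref{A3}) by characteristics and then to run the contraction mapping theorem, with the hypothesis $\sup\beta_0(\cdot)\le\beta_\infty$ used only to keep the nonlinear functional $L\mapsto T(L)$ well defined and stable, uniformly over the admissible class of initial data. First I would fix once and for all $\ve\le 1$, so that every $L\in\mathbf{E}_{\ve,\del}$ satisfies $1/2\le L(s)\le 2$ on $[0,\del]$, and solve the characteristic ODE (\ref{D3}); by (\ref{N1}) this produces a convex increasing $F(\cdot,t)$ with $F(0,t)>0$ and $0<F'(x,t)<1$, so that $w(x,t)=w_0(F(x,t))$ solves (\ref{A3}) with $w(\cdot,0)=w_0(\cdot)$. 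Using $1/2\le L(s)\le 2$ and $t\le\del$ one reads off the a priori bounds $0\le F(0,t)\le\del$, $0\le 1-F'(x,t)\le C\del$ locally uniformly in $x$, and a weighted bound $|F(x,t)-x|\le C\del\,(1+x)^{1/3}$; and a Gronwall comparison of two characteristics gives $|F_1(x,t)-F_2(x,t)|\le\omega_\del\,\|L_1-L_2\|_\infty$, with analogous bounds for $F_1(0,\cdot)$, $F_1'$, the inverse $G(\cdot,t)=F(\cdot,t)^{-1}$ and its derivative $G'$, where $\omega_\del\to 0$ as $\del\to 0$. Some care is needed near $s=t$, where $x(s)$ is small and the $x$-derivative of $(x/L(s))^{1/3}$ is unbounded, but the resulting singular coefficient in the Gronwall estimate is $O((t-s)^{-2/3})$, which is integrable, so the bootstrap and the comparison go through.

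Next I would rewrite the functional in a form involving no derivatives of $w_0$. Changing variables $y=F(x,t)$ in (\ref{F3}) gives
\[
[T(L)(t)]^{1/3}=\frac13\;\frac{\displaystyle\int_{F(0,t)}^{\infty} G(y,t)^{-2/3}\,G'(y,t)\,w_0(y)\,dy}{w_0(F(0,t))},
\]
and near the left endpoint the integrand has only the integrable singularity $(y-F(0,t))^{-2/3}$, since $G(F(0,t),t)=0$ and $G'$ is bounded there. Now $\sup\beta_0(\cdot)\le\beta_\infty$ enters through Lemma 2.1: formula (\ref{O2}) with the $\beta$ there replaced by its upper bound yields (\ref{P2}), i.e. $w_0(x)\ge\tfrac12 w_0(0)$ on an interval $[0,C_1(\beta_\infty)\langle X_0\rangle]$; and since $\langle X_0^{1/3}\rangle=L(0)^{1/3}=1$, ordinary Jensen and the reverse Jensen bound of Proposition 2.1(a) force $1\le\langle X_0\rangle\le C(\beta_\infty)$. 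Consequently: (i) as $F(0,t)\le\del$, the denominator stays within a factor $2$ of $w_0(0)$; (ii) the numerator differs from $\int_0^\infty y^{-2/3}w_0(y)\,dy=3w_0(0)L(0)^{1/3}$ by at most $w_0(0)$ times a quantity tending to $0$ as $\del\to 0$ at a rate depending only on $\beta_\infty$ — the near-endpoint region contributes at most $C\,w_0(0)\,F(0,t)^{1/3}$, and the tail is controlled by the finiteness of $\int_0^\infty y^{-2/3}w_0\,dy$ together with the weighted bound on $F(x,t)-x$. Hence $|[T(L)(t)]^{1/3}-1|\le\omega_{\beta_\infty}(\del)$ for all $t\in[0,\del]$ with $\omega_{\beta_\infty}(\del)\to 0$, and continuity of $F(\cdot,t)$ in $t$ gives $T(L)\in C([0,\del])$.

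To close the argument I would first choose $\del$ small, depending only on $\beta_\infty$, so small that $\omega_{\beta_\infty}(\del)$ and the Lipschitz constant appearing below are each less than a suitable small absolute constant, and then set $\ve$ equal to a fixed multiple of $\omega_{\beta_\infty}(\del)$ (and $\le 1$). With this choice $T(L)(0)=1$, $T(L)$ is continuous, and $|[T(L)(t)]^{1/3}-1|\le\omega_{\beta_\infty}(\del)$ places $T(L)(t)$ inside $[(1+\ve)^{-1},1+\ve]$, so $T:\mathbf{E}_{\ve,\del}\to\mathbf{E}_{\ve,\del}$. For the contraction, subtract the two representations of $[T(L_i)(t)]^{1/3}$: the difference of the denominators and of the geometric factors $G,G',F(0,t)$ is controlled by the Gronwall bounds of the first paragraph, while the $w_0$-weighted integrals are estimated exactly as in (ii), using $\sup\beta_0\le\beta_\infty$ to keep everything near $y=0$ under control; the net result is $\|[T(L_1)]^{1/3}-[T(L_2)]^{1/3}\|_\infty\le\omega_{\beta_\infty}(\del)\|L_1-L_2\|_\infty$, and since on $\mathbf{E}_{\ve,\del}$ cubing is Lipschitz we obtain $\|T(L_1)-T(L_2)\|_\infty\le C\,\omega_{\beta_\infty}(\del)\|L_1-L_2\|_\infty$, a strict contraction for $\del$ small.

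I expect the main obstacle to be the uniform-in-$w_0$ stability of the functional in the second paragraph: the weight $x^{-2/3}$ makes the integral in (\ref{F3}) sensitive to the behavior of $w_0$ near the origin — indeed it would diverge if $w_0$ were too concentrated there — and the denominator $w_0(F(0,t))$ must be kept bounded away from $0$ as $F(0,t)$ drifts off the origin; the hypothesis $\sup\beta_0(\cdot)\le\beta_\infty$, quantified through (\ref{O2}) and (\ref{P2}), is exactly the input that supplies both, and extracting the right quantitative estimates from it is the technical heart of the proof. A secondary point is that $w_0$ is only $C^1$, with $-w_0'=c_0$ possibly unbounded, which is why it is important to pass to the variable $y=F(x,t)$ at the outset, so that $w_0$ enters solely as an integrating measure and no derivative of $w_0$ is ever needed; and, as already noted, the Gronwall estimates near $s=t$ rely on the mild observation that the singular coefficient there is integrable.
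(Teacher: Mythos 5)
Your self-mapping half is essentially the paper's own argument: solve the characteristics, change variables $y=F(x,t)$ as in (\ref{I3}), get $1\le\langle X\rangle\le C(\beta_\infty)$ from Jensen plus the reverse Jensen bound of Lemma 2.1, use (\ref{O2})--(\ref{P2}) to keep $w_0(F(0,t))/w_0(0)$ near $1$, and compare the shifted kernel with $y^{-2/3}$ to land at the analogue of (\ref{M3})--(\ref{N3}). Apart from the harmless imprecision that $1-\pa F(x,t)/\pa x$ is only $O(\del^{1/3})$ uniformly down to $x=0$ (not $O(\del)$), this part is fine.

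The genuine gap is in the contraction step. Subtracting the two changed-variable representations of $[T(L_i)(t)]^{1/3}$ does not yield a bound linear in $\|L_1-L_2\|_\del$, because the integrable singularities of the two integrands sit at \emph{different} points $F_1(0,t)\neq F_2(0,t)$: the sliver between the two lower limits contributes on the order of $|F_1(0,t)-F_2(0,t)|^{1/3}\sim(\del\,\|L_1-L_2\|_\del)^{1/3}$ when the integrands are estimated separately, and on the common domain $|G_1^{-2/3}-G_2^{-2/3}|$ is of order $(y-F(0,t))^{-5/3}|G_1-G_2|$ near the endpoint, which again produces only a H\"older-$1/3$ modulus in $\|L_1-L_2\|_\del$; while your alternative prescription to treat the $w_0$-weighted integrals ``exactly as in (ii)'' gives smallness in $\del$ but no factor $\|L_1-L_2\|_\del$ at all. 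Neither reading delivers $\|TL_1-TL_2\|_\del\le C\del^{1/3}\|L_1-L_2\|_\del$, so no contraction. The missing idea is the cancellation between the two endpoint regions, which requires comparing $w_0$ at the two nearby arguments with the singular weight \emph{aligned}. The paper achieves this by the split (\ref{BA3}): for $0\le x\le\langle X\rangle/2$ it does \emph{not} change variables, so both terms carry the same weight $x^{-2/3}$, and the difference is controlled by the ratio bound (\ref{P3}), which comes from the explicit representation (\ref{O2}) of $w_0$ in terms of $\beta_0$ --- this is precisely where $\sup\beta_0(\cdot)\le\beta_\infty$ enters the Lipschitz estimate, not merely to ``keep things near $y=0$ under control''; only for $x\ge\langle X\rangle/2$ is the $y$-variable form used, via (\ref{Q3})--(\ref{U3}). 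This also undercuts your closing remark that changing variables at the outset lets $w_0$ enter only as a measure: the contraction genuinely needs pointwise ratio control of $w_0$ at nearby arguments near the origin, and (\ref{O2}) is exactly what supplies it.
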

\begin{proof} We first show that $\ve$ can be chosen so that $T$ maps $\mathbf{E}_{\ve,\del}$ to itself.  Let $X$ be the positive random variable with finite first moment associated with $w_0(\cdot)$ as in $\S2$. Then from Lemma 2.1 and (\ref{C2})  there is a constant $C(\beta_\infty)>0$ depending only on $\beta_\infty$ such that
\be \label{G3}
C(\beta_\infty)<X>^{1/3} \ \le \ <X^{1/3}> \ \le  \ <X>^{1/3}.
\ee
From (\ref{C3}) we see that $<X^{1/3}>^3=L(0) =1$, whence it follows from  (\ref{G3}) that that $<X>  \ =  \ O(1)$.  

Let us assume now that $L(\cdot)\in  \mathbf{E}_{\ve,\del}$ and $\ve$ satisfies $0 < \ve < 1$. Then we see that a solution $x(s), \  0\le s \le \del$, to (\ref{D3}) has the property
\begin{eqnarray} \label{H3}
-1 \le dx/ds &\le& -1/2, \ \ \ \ \ \ 0 \le x(s) \le 1/16, \\
-1\le dx/ds &\le& [2x(s)]^{1/3}, \ \ x(s) \ge 1/16. \nn
\end{eqnarray}
From (\ref{F3}) we have that 
\begin{eqnarray} \label{I3}
[T(L)(t)]^{1/3} &=& \frac 1 3 \int^\infty_0 x^{-2/3} w_0 \big(F(x,t)\big)dx/w_0\big( F(0,t)\big) \\
&=& \frac 1 3 \int^\infty_{F(0,t)} x^{-2/3} \; \frac{w_0(y)}{\pa F(x,t)/\pa x} \ dy/w_0\big( F(0,t) \big), \nn
\end{eqnarray}
where we have made the change of variable $y=F(x,t), \ x \ge 0$, and $\pa F(x,t)/\pa x$ is given by the RHS of (\ref{E3}).  From (\ref{H3}) it follows that $F(0,t) \le t$ whence from (\ref{O2}) we conclude that 
\be \label{J3}
\frac 1{1+C_1\del} \le \frac{w_0(F(0,t))}{w_0(0)} \le 1 + C_1\del, \ \ 0 \le t \le \del,
\ee
provided $0 \le \del \le \del_1$, where $C_1, \del_1$ are positive constants depending only on $\beta_\infty$.  We also see from (\ref{E3}), (\ref{H3}) that there are positive constants $C_2,\del_2$ such that for $0 \le \del \le \del_2$,
\be \label{K3}
\frac 1{1+C_2\del^{1/3}} \le \frac{\pa F(x,t)}{\pa x} < 1 , \quad  0 \le t \le \del.
\ee
This in turn implies that 
\be \label{L3}
\frac 1{1+C_2\del^{1/3}} \le [ F(x,t) - F(0,t)] \big/ x < 1, \quad  x > 0, \ 0 \le t \le \del.
\ee

We conclude from (\ref{I3}) - (\ref{L3}) that there are positive constants $C_3,\del_3$ depending only on $\beta_\infty$ such that if $0 \le \del \le \del_3, \ 0 \le t \le \del$,
\begin{multline} \label{M3}
\frac 1{3[1+C_3\del^{1/3}]}  \int^\infty_{F(0,t)} \left[ y - F(0,t) \right]^{-2/3} \; w_0(y)dy/w_0(0) \le 
\left[ T(L)(t) \right]^{1/3} \ , \\
\left[ T(L)(t) \right]^{1/3} \le \frac {[1+C_3\del^{1/3}]}{3} \; \int^\infty_{F(0,t)} \left[ y - F(0,t) \right]^{-2/3} \; w_0(y)dy/w_0(0) .
\end{multline}
Observe now that for $0 \le t \le \del$, there are the inequalities
\[  \int^\infty_{2\del} \left\{ \left[ y - F(0,t) \right]^{-2/3}  - y^{-2/3}\right\} \; w_0(y)dy/w_0(0) \le C_4\del^{1/3}, \]

\[  \int^{2\del}_0 y^{-2/3} \; w_0(y)dy/w_0(0) + \int^{2\del}_{F(0,t)} \left[ y - F(0,t) \right]^{-2/3} \; w_0(y)dy/w_0(0)
\le C_4\del^{1/3}, \]
for some universal constant $C_4$. We conclude then from (\ref{M3}) that there are positive constants $C_5, \;\del_5$ depending only on $\beta_\infty$ such that for $0 \le \del \le \del_5$,  $0 \le t \le \del$, there is the inequality,
\be \label{N3}
\frac 1 {1 + C_5\del^{1/3}} \le \left[ T(L)(t) \right]^{1/3} \le 1 + C_5 \del^{1/3} .
\ee
Hence $T$ maps $\mathbf{E}_{\ve,\del}$ to itself provided $C_5 \del^{1/3} \le \ve \le 1, \  0 < \del \le \del_5$.

Next we wish to show that $T$ is a contraction on $\mathbf{E}_{\ve,\del}$.  To do this we combine the formulas (\ref{F3}) and (\ref{I3}). Thus we may write
\begin{multline} \label{BA3}
\left[ T(L)(t)\right]^{1/3} =  \frac 1 3 \bigg[ \int^{<X>/2}_0 x^{-2/3} w_0\left(F(x,t)\right) dx \\
+\int^\infty_{F(<X>/2,t)} x^{-2/3} \; \frac{w_0(y)}{\pa F(x,t)/\pa x} \ dy \bigg]
 \; \bigg/ w_0\left(F(0,t)\right) \ .
\end{multline}
Let $L_1,L_2 \in \mathbf{E}_{\ve,\del}$ and $F_1, F_2$ be the corresponding mappings defined by (\ref{D3}).  From (\ref{D3}) we have the inequality,
\begin{multline} \label{O3}
\left| \frac{dx_1}{ds} - \frac{dx_2}{ds}\right| \le |x_1(s) - x_2(s)| \Big/ 3 L_1(s)^{1/3}\min \left[ x_1(s)^{2/3}, x_2(s)^{2/3}\right]  \\
+ x_2(s)^{1/3} \left| L_1(s)^{-1/3} - L_2(s)^{-1/3} \right|,
\end{multline}
where $x_i(s), \ i = 1,2$ are solutions to (\ref{D3}) corresponding to $L_i(s), \  i =  1,2$ respectively.  Letting $\| \cd \|_\del$ be the uniform norm on $\mathbf{E}_{\ve,\del}$ we see from (\ref{O2}) and (\ref{O3}) that there are positive constants $C_1,\del_1$ depending only on $\beta_0$ such that if $0 < \del < \del_1$, then there is the inequality, 
\be \label{P3}
w_0 \left( F_1(x,t) \right) / w_0 \left( F_2(x,t) \right) \le 1 + C_1\del^{1/3} \|L_1 - L_2 \|_\del, \quad  0 \le t \le \del, \ 0 \le x \le  \ <X>/2.
\ee
From (\ref{E3}) there is the inequality, 
\be \label{Q3}
[\pa F_1(x,t)/ \pa x]/[\pa F_2(x,t)/\pa x] \le 1 + C_2\del \|L_1 - L_2 \|_\del, \quad 0 \le t \le \del, \  x \ge  \ 
<X>/2,
\ee
provided $0 < \del \le \del_2$, where $C_2,\del_2$ are constants depending only on $\beta_\infty$.  Observe also that if $y = F(x,t)$ then one has
\be \label{R3}
x \ = \ y \ + \ \int^t_0 [dx/ds]ds, \ \ \ x(0) = y,
\ee
where $x(s), \ 0 \le s \le t$, is a solution of (\ref{D3}).  It follows then from (\ref{O3}), (\ref{R3}) that if $y = F_1(x_1,t) = F_2(x_2, t)$ then there is the inequality,
\be \label{T3}
x_1/x_2 \le 1 + C_3 \del \|L_1 - L_2 \|_\del, \quad  0 \le t \le \del, \ y  \ge  \   <X>/3,
\ee
provided $0 < \del \le \del_3$, where $C_3, \del_3$ are constants depending only on $\beta_0$.  Finally we note that there is the inequality
\be \label{U3}
\left| F_1( <X>/2, t) - F_2( <X>/2, t) \right| \le C_4\del \|L_1 - L_2\|_\del, \quad 0 \le t \le \del,
\ee
provided $0 < \del \le \del_4$, where $C_4, \ \del_4$ are constants depending only on $\beta_\infty$. We may now use (\ref{P3}) to estimate the first term in (\ref{BA3}) and  (\ref{Q3})-(\ref{U3}) to estimate the second term. We conclude  that there are constants $C_5, \ \del_5$ depending only on $\beta_\infty$ such that
\be \label{W3}
[TL_1(t)/TL_2(t)]^{1/3}\le 1+C_5\del^{1/3} \|L_1 - L_2\|_\del. \  \  0 \le t \le \del,
\ee
provided $0 < \del \le \del_5$. Hence if $\del$ is sufficiently small depending only on $\beta_\infty$ the mapping $T$ is a contraction. 
\end{proof}
As a consequence of Lemma 4.1 we obtain global existence for the LSW system (\ref{A3}), (\ref{B3}) and also a bound on the rate of coarsening.
\begin{proposition} Let $\beta_0(\cdot), \; w_0(\cdot)$ be as in Lemma 4.1.  Then there exists a solution $w(x,t), \ x\ge 0, \ t> 0$, of the LSW system (\ref{A3}), (\ref{B3}) with initial data $w_0(\cdot)$.  Further, there is a constant $C(\beta_\infty) > 0$ depending only on $\beta_\infty$ such that $L(t)$ as given in (\ref{C3}) satisfies the inequality $L(t) \le 2L(0) + C(\beta_\infty)t, \ t \ge 0$.
\end{proposition}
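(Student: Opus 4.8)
The plan is to upgrade the local existence supplied by Lemma 4.1 into a global solution by the rescaling‑and‑iteration scheme sketched in the introduction, and to read off the linear bound on $L(t)$ from the quantitative estimate $(\ref{N3})$. The one fact that makes the iteration run is that the hypothesis $\sup\beta(\cdot)\le\beta_\infty$ is propagated by the LSW flow: on any interval where the solution is defined one has $w(x,t)=w_0(F(x,t))$ with $F(\cdot,t)$ convex, so $(\ref{L2})$ gives $\beta(x,t)\le\beta_0(F(x,t))$ and hence $\sup_x\beta(x,t)\le\sup\beta_0(\cdot)\le\beta_\infty$; since $\beta(\cdot)$ is also invariant under the dilation rescaling, the rescaled data obtained at the end of a step again satisfies the hypotheses of Lemma 4.1 with the \emph{same} $\beta_\infty$, hence with the same step length $\del=\del(\beta_\infty)$.

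Making this precise, I would set $t_0=0$, $\ell_0=L(0)$ (which may be normalized to $1$), and, given the solution up to $t_n$ with $\ell_n:=L(t_n)$, pass to $w^{(n)}(x,s)=\ell_n\,w(\ell_n x,\,t_n+\ell_n s)$, which has $L^{(n)}(0)=1$ and $\sup\beta^{(n)}(\cdot,0)\le\beta_\infty$. Lemma 4.1 then produces a fixed point $L^{(n)}(\cdot)$ of $T$ on $[0,\del]$, and $(\ref{N3})$ gives $(1+C_5\del^{1/3})^{-3}\le L^{(n)}(s)\le(1+C_5\del^{1/3})^{3}=:\kappa$ there. Undoing the rescaling extends the solution to $[t_n,t_{n+1}]$ with $t_{n+1}:=t_n+\ell_n\del$ and $\ell_{n+1}=\ell_n L^{(n)}(\del)\in[\kappa^{-1}\ell_n,\kappa\ell_n]$, and the pieces glue into a solution of $(\ref{A3})$ which satisfies $(\ref{B3})$ throughout, since differentiating $\int_0^\infty w(x,t)\,dx$ in $t$ and using the defining relation $(\ref{C3})$ for $L$ gives $\frac{d}{dt}\int_0^\infty w(x,t)\,dx=0$. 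A one‑line induction from $t_{n+1}-t_n=\del\ell_n$ and $\ell_{n+1}\le\kappa\ell_n$ yields $\ell_n\le\ell_0+\frac{\kappa-1}{\del}\,t_n$; since every $t\in[t_n,t_{n+1}]$ obeys $L(t)=\ell_n L^{(n)}((t-t_n)/\ell_n)\le\kappa\ell_n\le\kappa\ell_0+\kappa\frac{\kappa-1}{\del}t$ and $C_5$ depends only on $\beta_\infty$, one first shrinks $\del=\del(\beta_\infty)$ so that $\kappa\le2$ and reads off $L(t)\le 2L(0)+C(\beta_\infty)t$ with $C(\beta_\infty)=2(\kappa-1)/\del$.

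The remaining point is that the construction actually covers all of $(0,\infty)$, i.e.\ that $\sup_n t_n=\del\sum_n\ell_n=\infty$; this is the only place a lower bound on $L$ is needed. I would argue as follows: evaluating $(\ref{A3})$ at $x=0$ gives $\partial_t w(0,t)=\partial_x w(0,t)=-c(0,t)\le0$, so $\Lambda(t)=1/w(0,t)$ is non‑decreasing (this is the identity $(\ref{R1})$, which is elementary here). Because $\sup\beta(\cdot,t)\le\beta_\infty$ for all $t$, the reverse Jensen inequality of Lemma 2.1 applied with $\al=1/3$ to the variable $X_t$ of law $c(\cdot,t)/\!\int c(\cdot,t)$, for which $\langle X_t^{1/3}\rangle^3=L(t)$ and $\langle X_t\rangle=\Lambda(t)$, gives $L(t)\ge C_1(\beta_\infty)\Lambda(t)\ge C_1(\beta_\infty)\Lambda(0)\ge C_1(\beta_\infty)L(0)$, the last step being Jensen's inequality $\Lambda(0)\ge L(0)$. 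Hence $\ell_n\ge C_1(\beta_\infty)\ell_0$ for all $n$, so $t_n\ge n\del C_1(\beta_\infty)\ell_0\to\infty$ and the glued solution is global.

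I expect the genuine difficulty to be bookkeeping rather than conceptual: checking that each rescaled restart really meets the hypotheses of Lemma 4.1 — in particular that $w(\cdot,t)$ remains $C^2$, so that $\beta(\cdot,t)$ is defined, which I would derive from $w(x,t)=w_0(F(x,t))$ with $w_0\in C^1$ (a consequence of $h_0\in C^2$) and $F(\cdot,t)\in C^2$ by $(\ref{E3})$ — that the glued $L(\cdot)$ and $w(\cdot,\cdot)$ constitute a bona fide solution of $(\ref{A3})$, $(\ref{B3})$ across the junction times, and the mild tuning of $\del$ needed to force the geometric factor $\kappa$ below $2$ while keeping $T$ a contraction. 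No new idea beyond Lemmas 2.1 and 4.1 and the convexity inequality $(\ref{L2})$ seems to be required.
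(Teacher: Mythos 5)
Your construction is correct and, for the most part, coincides with the paper's: the same iteration of Lemma 4.1 with a fixed step $\delta=\delta(\beta_\infty)$, justified by the propagation $\sup_x\beta(x,t)\le\beta_\infty$ via convexity of $F(\cdot,t)$ and (\ref{L2}) together with dilation invariance of the beta function, and the same bookkeeping $t_{n+1}=t_n+\delta L(t_n)$, $L(t_{n+1})\le\kappa L(t_n)$ giving the linear upper bound on $L$. Where you genuinely depart from the paper is the last, and only delicate, step: proving $t_n\to\infty$. The paper argues by contradiction: if $t_n\to t_\infty<\infty$ then $L(t)\to0$, and it rules this out by a characteristics argument (placing the maximum of $L$ at $t=0$, noting $F(1,t)\le1$ when $L\le1$, so $w(1,t)\ge w_0(1)>0$ and (\ref{C3}) keeps $L$ bounded below near $t_\infty$); notably this step does not use the hypothesis on $\beta_0$ at all. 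You instead derive a quantitative uniform lower bound $L(t)\ge c(\beta_\infty)L(0)$ by combining three facts: monotonicity of $\Lambda(t)=1/w(0,t)$ (the elementary half of (\ref{R1})), the reverse Jensen inequality of Lemma 2.1 applied to $X_t$ (legitimate because $\sup\beta(\cdot,t)\le\beta_\infty$ is propagated), giving $L(t)\ge c(\beta_\infty)\Lambda(t)$, and Jensen's inequality $\Lambda(0)\ge L(0)$. This chain is correct as stated (the normalization (\ref{B3}) persists, as you note, since (\ref{C3}) is exactly the condition making $\frac{d}{dt}\int_0^\infty w\,dx=0$), and it buys you something the paper's argument does not: an explicit, time-uniform lower bound on $L(t)$ and hence a lower bound $t_n\ge n\,\delta\,c(\beta_\infty)L(0)$, rather than a mere non-collapse statement; the price is that it leans once more on the hypothesis $\sup\beta_0\le\beta_\infty$, whereas the paper's contradiction argument for this step is hypothesis-free. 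The remaining items you flag (regularity of $w(\cdot,t)$ so that $\beta(\cdot,t)$ is defined, gluing at the junction times, shrinking $\delta$ so that $\kappa\le2$) are indeed only bookkeeping and do not hide any difficulty.
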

\begin{proof}  Observe that by dilation invariance we can assume $L(0) = 1$.  By Lemma 4.1 there exists a solution of (\ref{A3}), (\ref{B3}) for $0 \le t \le \del$.  Consider now the function $\beta(\cdot,\del)$ associated with $w(\cdot,\delta)$. From (\ref{E3}) we see that $F(\cdot,\del)$ is convex, whence by (\ref{L2}) it follows that $\sup\beta(\cdot,\del) \le \beta_\infty$.   We may therefore use Lemma 4.1 to find a solution of (\ref{A3}), (\ref{B3}) for some $t > \del$.  By dilation invariance the interval is $\del \le t \le \del + \del L(1)$.  More generally we can define a sequence of times $t_n, \ n=0,1,2....$, with $t_0 = 0$ and $t_n = t_{n-1} + \del L(t_{n-1})$.  Lemma 4.1 then implies the existence of a solution to (\ref{A3}), (\ref{B3}) in the interval $[0, t_n]$.  In view of the fact that $L(t_n) \le (1+\ve) L(t_{n-1}), \ n \ge 1$, we conclude that $L(t_n) \le (1+\ve)t_n/\del, \ n \ge 1$.  We also have that for $t_n \le s \le t_{n+1}$, $L(s) \le (1+\ve) L(t_n)$, whence it follows that
\be \label{X3}
L(t) \ \le \ (1 + \ve)^2\; t \; / \; \del, \quad t \ge \del.
\ee
To complete the proof of the proposition we need then to show that $\lim_{n \ra \infty} t_n =\infty$.  Let us suppose that $\lim_{n \ra \infty} t_n = t_\infty < \infty$.  Then $L(t), \ 0 \le t < t_\infty$, is a continuous function with the property $\lim_{t\ra t_\infty} L(t) = 0$.  It follows that $L(t), \ 0 \le t < t_\infty$, has a maximum at some point $t_1, \ 0 \le t_1 < t_\infty$.  We can assume wlog that $t_1 = 0$.  Now if the random variable $X$ associated with $w_0(\cdot)$ satisfies $\|X\|_\infty= 1$ then $w_0(x) = 1, 0 \le x \le 1$, whence $L(t) = 1$ for all $t$ which is a contradiction.  Hence we may assume $\|X\|_\infty > 1$, in which case $w_0(1) > 0$.  Since $L(t) \le 1, \ 0 \le t < t_\infty$, we have that $F(1,t) \le 1, \ 0 \le t < t_\infty$.  Since $w(x,t) = w_0(F(x,t))$ it follows from (\ref{C3}) that $\lim\inf_{t \ra t_\infty}L(t) > 0$, again a contradiction.  We conclude that $\lim_{n\ra \infty} t_n = \infty$. 
\end{proof}
We can also see that under the same conditions on the initial data as in Proposition 4.1 that the upper bounds (\ref{K1}) on the rate of coarsening hold. 
\begin{proposition} Let $\beta_0(\cdot), \; w_0(\cdot)$ be as in Lemma 4.1 so $L(0)=1, \  \sup\beta_0(\cdot)\le\beta_\infty$, and  $w(x,t), \ x\ge 0, \ t> 0$,  be the solution of the LSW system (\ref{A3}), (\ref{B3}) with initial data $w_0(\cdot)$.  Then the inequality (\ref{K1}) holds for a constant $C$ of the form $C=C(\beta_\infty)$ depending only on $\beta_\infty$.
\end{proposition}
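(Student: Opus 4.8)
The strategy is to exploit the dilation-invariant iteration structure already set up in Lemma 4.1 and Proposition 4.1, together with the key identity \eqref{R1}, namely $d\Lambda(t)/dt = \beta(0,t)$. Since $\sup\beta_0(\cdot)\le\beta_\infty$ and, by \eqref{L2} together with the convexity of $F(\cdot,t)$ (which follows from \eqref{E3}), the property $\sup\beta(\cdot,t)\le\beta_\infty$ is propagated for all $t\ge 0$. The plan is therefore: first record that $\sup\beta(\cdot,t)\le\beta_\infty$ for every $t\ge 0$; second, integrate \eqref{R1} to obtain $\Lambda(T)=\Lambda(0)+\int_0^T\beta(0,t)\,dt$; third, bound $\beta(0,t)$ above in terms of $L(t)$, which by Proposition 4.1 is at most $2L(0)+C(\beta_\infty)t$. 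The first inequality $\Lambda(T)\le CT$ then follows by integration, once we know $\beta(0,t)\le C'(\beta_\infty)$ — recall $\beta(0,t)\le 1$ when $h(\cdot,t)$ is log-concave, which is exactly $\sup\beta(\cdot,t)\le 1$; but since only $\sup\beta\le\beta_\infty$ is assumed, we use instead the formula \eqref{M2}, which shows that $\beta(0,t)$ is controlled by $\sup\beta(\cdot,t)$ and the mean $\langle X_t\rangle=\Lambda(t)$, and by scaling $\beta(0,t)$ is in fact dimensionless and bounded by a constant depending only on $\beta_\infty$.

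The cleaner route, and the one I would actually follow, is to relate $\Lambda(T)$ directly to $1/E(T)^3$ and then to $L(T)$. By \eqref{C2} applied to the random variable $X_T$ with law $c(\cdot,T)\,dx/\!\int c$, we have $\langle X_T^{2/3}\rangle\le\langle X_T\rangle^{2/3}$, while Lemma 2.1 (applicable since $\sup\beta(\cdot,T)\le\beta_\infty$) gives the matching reverse bound $\langle X_T^{2/3}\rangle\ge C_1(\beta_\infty)^{2/3}\langle X_T\rangle^{2/3}$. Since $E(T)=\int x^{2/3}c\,dx$ and $\int xc\,dx=1$ by \eqref{B1}, one has $E(T)=\langle X_T^{2/3}\rangle/\langle X_T\rangle$ and $\Lambda(T)=\langle X_T\rangle$, so $E(T)^{-3}$ and $\Lambda(T)$ differ by a factor between $1$ and $C_1(\beta_\infty)^{-2}$. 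Thus the two inequalities in \eqref{K1} are equivalent up to the constant, and it suffices to prove $\Lambda(T)\le C(\beta_\infty)T$ for $T\ge 1$.

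For that final bound I would invoke Proposition 4.1 together with the comparison between $L(t)$ and $\Lambda(t)$. From \eqref{C3}, $L(t)^{1/3}=\frac13\int_0^\infty x^{-2/3}w(x,t)\,dx/w(0,t)$; rewriting the integral in terms of the random variable $X_t$ one finds $L(t)^{1/3}=\frac13\langle X_t^{1/3}\rangle$ up to normalization constants, and another application of Jensen \eqref{C2} and Lemma 2.1 shows $L(t)$ and $\Lambda(t)$ are comparable with constants depending only on $\beta_\infty$. Since Proposition 4.1 gives $L(t)\le 2L(0)+C(\beta_\infty)t$ with $L(0)=1$, we get $\Lambda(T)\le C(\beta_\infty)(1+T)\le 2C(\beta_\infty)T$ for $T\ge 1$, and hence $1/E(T)^3\le C(\beta_\infty)T$ as well. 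The only point requiring care — and the main obstacle — is making the various ``comparable up to constants depending only on $\beta_\infty$'' statements precise in a scale-invariant way: one must check that each comparison uses only the hypothesis $\sup\beta(\cdot,t)\le\beta_\infty$ (and the conservation law) and is invariant under the dilation $c(x,t)\mapsto\lambda^2 c(\lambda x,\lambda t)$, so that no hidden dependence on $L(0)$ or on the tail of $w_0$ creeps in. Given Lemma 2.1 and the propagation of $\sup\beta(\cdot,t)\le\beta_\infty$, this is routine but needs to be stated carefully.
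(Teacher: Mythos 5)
Your proposal is correct, but it is organized somewhat differently from the paper's proof, which is shorter. For the first inequality of (\ref{K1}) the paper argues in one line from the identity (\ref{R1})/(\ref{Y3}): since convexity of $F(\cdot,t)$ together with (\ref{L2}) propagates $\sup\beta(\cdot,t)\le\beta_\infty$, one has $d\Lambda/dt=\beta(0,t)\le\beta_\infty$ and hence $\Lambda(T)\le\Lambda(0)+\beta_\infty T$; the hesitation in your first paragraph over how to bound $\beta(0,t)$ is unnecessary, because $\beta(0,t)$ is simply a value of $\beta(\cdot,t)$ and so is at most $\beta_\infty$ directly, with no appeal to (\ref{M2}) needed. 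You instead obtain $\Lambda(T)\le CT$ by combining Proposition 4.1, $L(t)\le 2L(0)+C(\beta_\infty)t$, with the two-sided comparison $C(\beta_\infty)\Lambda(t)\le L(t)\le \Lambda(t)$ coming from Jensen and Lemma 2.1 with exponent $1/3$; this is legitimate (Proposition 4.1 is already established) but detours through the existence argument and yields a clumsier constant than the direct use of (\ref{R1}). For the second inequality your treatment is the right one, and arguably cleaner than the paper's text: to pass from $\Lambda(T)\le CT$ to $1/E(T)^3\le CT$ one needs the \emph{reverse} Jensen inequality of Lemma 2.1 with $\alpha=2/3$ (available because $\sup\beta(\cdot,T)\le\beta_\infty$), which gives $1/E(T)^3\le \Lambda(T)/C_1(\beta_\infty)^2$; ordinary Jensen (\ref{C2}) only yields $\Lambda(T)\le 1/E(T)^3$, the opposite direction, so your explicit invocation of Lemma 2.1 is exactly the step that makes the deduction go through. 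Finally, the scale-invariance worry you raise is harmless: the $\beta$ function and the constants in Lemma 2.1 are dilation invariant, so no hidden dependence on $L(0)$ or on the tail of $w_0$ enters.
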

\begin{proof}
Letting  $\La(t)$ be defined by (\ref{I1}), then it is clear that $\La(t) = 1 / w(0,t)$ where $w(x,t)$ is a solution of the LSW system (\ref{A3}), (\ref{B3}).  Hence we have that
\be \label{Y3}
 \frac {d\La}{dt} = \frac{-1}{w(0,t)^2} \; \frac{\pa w}{\pa t} \; (0,t) = \frac{-1}{w(0,t)^2} \; \frac{\pa w}{\pa x} \; (0,t) = \beta(0,t) ,
\ee
where $\beta(\cdot,t)$ is the function (\ref{G2}) corresponding to $w(\cdot,t)$.  Thus one has $\La(T) \le \La(0) + \beta_\infty T$, whence the first inequality in (\ref{K1}) follows.

To obtain the second inequality in (\ref{K1}) we observe from (\ref{P1}) and Jensen's inequality  
(\ref{C2}) that
\be \nonumber
E(T)=<X^{2/3}> / \La(T) \ \le \ <X>^{2/3} /\La(T)=1/\La(T)^{1/3},
\ee
where we have used the conservation law (\ref{B1}).
The result follows then from the first inequality of (\ref{K1}).
\end{proof}
We may adapt the methodology used in Lemma 3.1 and Lemma 3.2  to obtain a positive lower bound on $\inf\beta(\cdot,t)$ which is uniform in $t$ as $t\ra\infty$, when $\sup\beta_0(\cdot)$ is sufficiently small and  $\liminf\beta_0(\cdot)$ at the end of the support of $w_0(\cdot)$ is positive. This implies by virtue of (\ref{R1})  a lower bound on the rate of coarsening for the LSW system. Rather than applying the methods of $\S3$ directly, we shall here take advantage of the fact that the LSW evolution is continuous in time instead of discrete as in $\S3$. This allows for some simplifications in the proof of the LSW lower bound, but the methodology is subject to the same limitations as that followed in the proof of the lower bounds of $\S3$.

We first make a change of scale so that the average cluster volume $\La(t)$ defined by  (\ref{I1}) is normalized to $1$. Thus let us consider the solution $x(s), 0 \le s \le t$, of (\ref{D3}) with $x(t) = x$.  We put $y(\tau') = x(s)/ \La(s)$, where $\tau', s$  and $\tau,  t$ are related by the change of variable, 
\be \label{C4}  \tau' = \int^s_0 ds' / \La(s'), \ \ \ \ \tau = \int^t_0 ds' / \La(s'),	\ee
and $\Lambda(\cdot)$ is the mean volume function (\ref{I1}).
Then (\ref{D3}) becomes
\be \label{B4} \frac {dy}{d\tau '} = -1 + \ga(\tau ')^{1/3} \; y^{1/3} - c(\tau')y, \quad y(\tau) = y.	\ee
The functions $\ga (\cdot), \ c(\cdot)$ are given by $c(\tau')=\beta(0,s), \  \ga(\tau')=\La(s)/L(s)$, with  $s, \tau'$ being related by (\ref{C4}). Observe from Lemma 2.1 that  $1\le \ga(\cdot)\le C(\beta_\infty)$ for a constant $C(\beta_\infty)$ depending only on $\beta_\infty$. Define now a function $G(y,\tau)$ by $G(y,\tau) =y(0)$ where $y(\tau ')$ is the solution of (\ref{B4}) with $y(\tau) = y$.  Then  if $F$ is the function determining the LSW evolution $w(x,t) = w_0(F(x,t))$,  $F$ and  $G$ are related by the identity 
\be \label{D4}  F(x,t) = \La(0) \ G( x /\La(t), \ \tau ), \ee
where $\tau$ is determined by (\ref{C4}).

\begin{proposition}  Let $\beta_0(\cdot), \ w_0(\cdot)$ with associated random variable $X_0$  be as in Proposition 4.2 and $w(x,t), x \ge 0, t \ge 0$, the solution of the LSW system (\ref{A3}), (\ref{B3}) with initial data $w_0(\cdot)$.  Then there exists $\beta_\infty>0$, such that for $\liminf\{\beta_0(x) :  x\ra\|X_0\|_\infty\}>0$,  the inequality (\ref{O1}) holds for some constant $C > 0$ depending only on $\beta_0(\cdot)$.
\end{proposition}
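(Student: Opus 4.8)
The plan is to combine the identity (\ref{R1})--(\ref{Y3}) with a uniform-in-time lower bound on $\beta(0,t)$, the latter obtained by propagating a barrier for $\beta(\cdot,t)$ along the continuous LSW flow in exactly the way Lemma 3.1 propagates a barrier along the discrete iteration of $\S3$. Since $\beta_\infty=\sup\beta_0(\cdot)$ is taken $<1$, the observation following (\ref{J2}) shows $w_0$ has compact support $[0,a_0]$, hence $w(\cdot,t)$ has compact support $[0,a(t)]$; moreover $F(\cdot,t)$ is convex by (\ref{E3}), so (\ref{L2}) yields $\sup_x\beta(x,t)\le\beta_\infty$ for every $t\ge0$. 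By (\ref{Y3}), $\La(T)=\La(0)+\int_0^T\beta(0,t)\,dt$, so it suffices to find $c_*=c_*(\beta_0)>0$ with $\beta(0,t)\ge c_*$ for all $t$; then $\La(T)\ge c_*T\ge CT$ for $T\ge1$. I then pass to the rescaled variables of (\ref{C4})--(\ref{D4}), in which $\La\equiv1$: the rescaled backward flow $G(\cdot,\tau)$ is convex (by (\ref{D4}) and convexity of $F(\cdot,t)$), its support endpoint is $b(\tau)=a(t)/\La(t)$, the coefficient $\ga(\cdot)$ satisfies $1\le\ga(\cdot)\le C(\beta_\infty)$ with $C(\beta_\infty)$ close to $1$ for $\beta_\infty$ small, and $c(\cdot)=\beta(0,\cdot)\le\beta_\infty$. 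As the rescaled variable $Y_\tau$ has mean $1$ one has $b(\tau)\ge1$, and the estimate in the proof of Lemma 3.2 with $\rho=1$ (which uses only $\sup\beta\le\beta_\infty$) gives $b(\tau)\le 1+\ve(\beta_\infty)$ with $\ve(\beta_\infty)\to0$ as $\beta_\infty\to0$.

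The heart of the argument is the propagation of the barrier. Using $\liminf_{x\to\|X_0\|_\infty}\beta_0(x)>0$ together with the fact that $\beta_0$ is bounded below on each compact subinterval of $[0,a_0)$, choose a continuous $g_*:[0,1]\to(0,1]$ with $g_*(1)=1$ and a constant $\nu>0$ (both depending on $\beta_0(\cdot)$) so that $\beta_0(x)\ge\nu g_*(x/a_0)$ for $0\le x<a_0$. I claim that, once $\beta_\infty$ is small enough, $\beta(x,t)\ge\nu g_*\big(x/a(t)\big)$ for all $t\ge0$ and $0\le x<a(t)$. To prove this, regard the rescaled evolution over a short interval $[\tau,\tau+h]$ as one step $\tilde w(\cdot,\tau+h)=\tilde w(\cdot,\tau)\circ G_h$, where $G_h$ is the time-$h$ backward flow map of (\ref{B4}); because the drift in (\ref{B4}) is concave in $y$, $G_h$ is convex, and because $b(\tau)$ and $\ga(\tau)$ are near $1$ and $c(\tau)$ is small the restriction of $G_h$ to the relevant range $[0,b(\tau+h)]$ satisfies the hypotheses (\ref{A2}) with fixed point and with threshold $\ga_{G_h}$ (as in Lemma 3.1) both bounded away from $b(\tau)$. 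Lemma 3.1 then applies at each such step: the function $g_*$ may be taken the same throughout (its construction there depends only on $\ga_{G_h}$, hence on $\beta_\infty$), and $\nu$ is unchanged. Letting $h\to0$ gives the barrier for all $\tau$, equivalently all $t$. The content of the step is precisely the proof of Lemma 3.1; its only delicate point, the Taylor expansion at the endpoint leading to (\ref{U2}), is handled because the ``gap'' $\|X\|_\infty-\|T_FX\|_\infty F'(\|T_FX\|_\infty)$ appearing there is bounded below --- a consequence of $b(\tau)\le1+\ve(\beta_\infty)$ and of $G_h'$ at the endpoint staying below $1$, both ensured by taking $\beta_\infty$ small.

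Granting the barrier, $\beta(0,t)\ge\nu g_*(0)>0$ for all $t$, so (\ref{O1}) holds with $C=\nu g_*(0)$. The main obstacle is exactly the uniform-in-$t$ validity of this propagation: unlike in $\S3$ there is no single convex map with derivative in $(0,1)$, so one must check that the geometric hypotheses required by Lemma 3.1 ($G_h$ strictly convex, contracting toward its fixed point, endpoint slope $\le c<1$, and endpoint $b(\tau)$ below the corresponding threshold) hold uniformly along the entire infinite trajectory; this forces $\beta_\infty$ to be small, so that the rescaled flow remains uniformly close to the linear Carr--Penrose flow, whose fixed point is $1$. A secondary subtlety is that the hypothesis controls $\beta_0$ only near the endpoint of its support; the nondegeneracy of $\beta_0$ on the rest of $[0,a_0)$, needed to fit the barrier at $t=0$, is built into allowing $C$ to depend on all of $\beta_0(\cdot)$ and is satisfied for the motivating examples such as $c_0(x)=e^{-x}$.
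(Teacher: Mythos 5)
Your overall strategy (reduce via (\ref{Y3}) to a uniform lower bound on $\beta(0,t)$, pass to the rescaled flow (\ref{B4}), and exploit smallness of $\beta_\infty$ so that the flow stays uniformly close to the linear one) is the right one, and your geometric observations (compact support, $\sup_x\beta(x,t)\le\beta_\infty$ by (\ref{L2}), $1\le\ga(\cdot)\le C(\beta_\infty)$, $1\le b(\tau)\le 1/(1-\beta_\infty)$) are all correct. But the central step --- ``Lemma 3.1 then applies at each such step'' with a single barrier $g_*$ --- is a genuine gap, not a citation. Lemma 3.1 is stated for a fixed map satisfying (\ref{A2}), and its barrier $g_\ga$ is constructed from that map: the slope condition (\ref{U2}) requires $g_\ga'(1)$ large compared with $F''$ divided by the endpoint gap, and the interior step uses the modulus $\del(\ve)$ of (\ref{V2}), both of which depend on $F$, not only on $\ga_F$. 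For your time-$h$ flow maps $G_h$ these quantities all degenerate: $G_h\to\mathrm{id}$, $G_h''=O(h)$, and the ``gap'' $\|X\|_\infty-\|T_FX\|_\infty F'(\|T_FX\|_\infty)$ is \emph{not} bounded below as you assert --- it is $\approx h\,[\,1-\tfrac23\ga(\tau)^{1/3}b(\tau+h)^{1/3}\,]=O(h)$. What saves the argument is that both sides of (\ref{U2}) (and of the interior comparison coming from (\ref{T2})) scale like $h$, so the inequality survives as a differential inequality on $g_*$ with coefficients controlled uniformly by $\beta_\infty$; but establishing this uniform-in-$h$, uniform-in-$\tau$ version is exactly the quantitative content of the proposition, and your write-up asserts it rather than proves it. In the paper this content is carried by a different, one-shot mechanism: formula (\ref{A4}) expresses $\beta(0,t)$ as $\beta_0(F(0,t))$ times a ratio of characteristic derivatives, and that ratio is bounded below by the backward contraction estimate (\ref{ZB4}), proved via the trapping region for $f_\alpha(z)=1-z^{1/3}+\alpha z$ (which is what forces $\beta_\infty<0.0471$) and the Gronwall bound (\ref{ZG4}). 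Your plan replaces this by an infinitesimal iteration of Lemma 3.1; that can be made to work, but not by invoking the lemma as a black box, and the one concrete quantitative claim you do make about it (the gap being bounded below) is false as stated.

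A second, smaller gap is the fitting of the barrier at $t=0$. It is not a ``fact'' that $\beta_0$ is bounded below on compact subintervals of $[0,a_0)$: the hypotheses only give $\sup\beta_0\le\beta_\infty$ and $\liminf_{x\to\|X_0\|_\infty}\beta_0(x)>0$, and $\beta_0$ may vanish on interior intervals (e.g.\ if $c_0$ vanishes there), in which case no $\nu>0$ and positive $g_*$ satisfy $\beta_0(x)\ge\nu g_*(x/a_0)$ on all of $[0,a_0)$. Allowing $C$ to depend on $\beta_0(\cdot)$ does not license assuming interior positivity. The repair is standard but must be made: since $F(0,t)\to\|X_0\|_\infty$, choose $T_1$ so large that $F(0,T_1)$ lies in the end region where $\beta_0$ is bounded below; by (\ref{K2}) and continuity of $\pa F/\pa x$ at the fixed time $T_1$, $\inf\beta(\cdot,T_1)>0$, and one restarts the propagation at $T_1$ (the loss for $1\le T\le T_1$ is absorbed into $C$ because $\Lambda$ is nondecreasing). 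The paper's proof avoids this entirely because it only ever evaluates $\beta_0$ at $F(0,t)$, which is eventually in the end region.
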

\begin{proof}  Observe first that  if $\beta(\cdot,t)$ denotes the $\beta$ function for $w(\cdot,t)$ then from (\ref{K2}) one has that 
\be \label{A4}	\beta(x,t) = \beta_0(F(x,t))\frac{\pa F(x,t)}{\pa x} \int^\infty_{F(x,t)}dz\left[ w_0(z) \Big/ \frac{\pa F(x', t)}{\pa x'}\right] \Bigg/ \int^\infty_{F(x,t)} w_0(z) dz \ , \ee
where $ z=F(x',t), \ x'\ge 0$.  Thus to get a lower bound on $\beta(0,t)$ it is sufficient to obtain a lower bound on 
\begin{multline}  \label{ZA4}
\inf\left[\frac{\pa F(0,t)}{\pa x}\Big/ \frac{\pa F(x, t)}{\pa x} \ \  \Big| \  0\le x< \|X_t\|_\infty \right] \\
=\inf \left[\frac{\pa G}{\pa y}(0,\tau)\Big/  \frac{\pa G}{\pa y}(y,\tau)  \ \  \Big| \  0\le y< \|X_t\|_\infty/\La(t)\right] \ ,
\end{multline}
where  $X_t$ is the random variable associated with $w(\cdot,t)$. The interval over which one takes the infimum in the second expression in (\ref{ZA4}) can be taken arbitrarily close to the interval $0\le y \le 1$ by choosing $\beta_\infty>0$ sufficiently small. To see this observe that in  (\ref{M2}) one can identify $1/q'(0)=<X>$. Since the denominator in the expression must  be positive for all $x$ satisfying  $0\le x< \|X\|_\infty$, it follows by letting $x\ra \|X\|_\infty$ that
\be \label{AA3}
<X> \  \le  \  \|X\|_\infty  \ \le \   \ <X> \big/ (1-\beta_\infty).
\ee
Thus the interval $ 0\le y\le \|X_t\|_\infty/\La(t)$ is contained in the interval $0 \le y \le 1/(1-\beta_\infty)$.

Now from (\ref{B4}) we have that $\pa G(y,\tau)/\pa y$ is given by the formula
\be \label{E4}
\frac{\pa G}{\pa y}(y,\tau) = \exp\left[ -\int^\tau_0 \; \frac 1 3 \; \frac{\ga(\tau')^{1/3}} {y(\tau')^{2/3}} d\tau' + \int^\tau_0 c(\tau')d\tau' \right]. \ee
Let us denote by $y_\ve(\tau')$ the solution of (\ref{B4}) with $y_\ve(\tau) = \ve$, whence
\be \label{F4}
\frac{\pa G}{\pa y}(0,\tau) \bigg/ \frac{\pa G}{\pa y}(\ve,\tau) = \exp\left[ - \frac 1 3 \int^\tau_0 \; \ga(\tau')^{1/3} \left\{  \frac 1{y_0(\tau ')^{2/3}} - \frac 1{y_\ve(\tau ')^{2/3} } \right\} d\tau'  \right]. \ee
Hence to obtain a uniform lower bound on the infimum in (\ref{ZA4}) it will be sufficient to show that
\be \label{ZB4}
0<y_\ve(\tau')-y_0(\tau') \le \ve \exp[-k(\beta_\infty)(\tau-\tau')], \quad 0 \le \tau' \le \tau, \ 0 \le \ve \le 1/(1-\beta_\infty),
\ee
for some constant  $k(\beta_\infty)$ depending only on $\beta_\infty$, which is strictly positive for $\beta_\infty>0$ sufficiently small.

To prove (\ref{ZB4}) we consider the differential equation (\ref{B4}). Observe that the expression on the RHS of the equation is given by $-f_{\alpha(\tau')}(\ga(\tau')y)$, where $\alpha(\tau')=c(\tau')/\ga(\tau')$ and $f_\alpha(\cdot)$ is the  function  $f_\alpha(z)=1-z^{1/3}+\alpha z, \  z>0$. It is clear that  $f_\alpha(\cdot)$  is  convex  with at most 2 zeros.  If $\alpha > 4/27=0.1481$ the function is positive.  If $\alpha = 4/27$ it is nonnegative with a single degenerate zero.  If $\alpha< 4/27$ there are 2 nondegenerate zeros. The minimum of $f_\alpha(z)$ occurs for $z= [1/3\alpha]^{3/2}$. Thus if $\alpha< 4/27$ one has that
$f_\alpha( [1/3\alpha]^{3/2})<0$.  In addition one has that
\be \label{ZD4}
f_\alpha(z)<0 \quad {\rm for \ } 1+4\alpha<z< [1/3\alpha]^{3/2}, \quad 0<\alpha< 
0.25\left[(4/3)^{3/5}-1\right]=0.0471.
\ee

We shall assume now that $\beta_\infty<0.0471$ so that the inequality (\ref{ZD4}) holds for the functions $f_{\alpha(\tau')}(\cdot), \ \tau'\ge 0$.  Observe from Lemma 2.1 that there is a constant $C(\beta_\infty)>1$ depending only on $\beta_\infty$ and with the property $\lim_{\beta_\infty\ra 0} C(\beta_\infty)=1$, such that  $\sup\ga(\cdot)\le C(\beta_\infty)$. Since we also have that $\inf \ga(\cdot)\ge 1$, we conclude from (\ref{ZD4}) that
\be  \label{ZF4}
f_{\alpha(\tau')}(\ga(\tau')y)<0, \quad {\rm for \ } \tau' \ge 0, \ 1+4\beta_\infty<y< 1/[C(\beta_\infty)(3\beta_\infty)^{3/2}].
\ee
Evidently we may choose  $\beta_\infty>0$ sufficiently small so that there exists $y=y_\infty>1/(1-\beta_\infty)$  which lies in the interval of (\ref{ZF4}). Thus for any $\ve \le y_\infty$ the solution  $y_\ve(\cdot)$  of (\ref{B4}) satisfies $\sup y_\ve(\cdot) \le y_\infty$.
If we now set  $\phi_\ve(\tau')=y_\ve(\tau')-y_0(\tau')$, we see from (\ref{B4}) that
\be \label{ZC4}
\frac{ d\phi_\ve(\tau')}{d\tau'}  = \\
\phi_\ve(\tau') \left\{ \int^1_0 \ga(\tau' )^{1/3} /3\left[  \la y_0(\tau' ) + (1-\la)y_\ve(\tau' )\right]^{2/3} \; d\la 
-c(\tau') \right\} .
\ee
In view of the bound $\sup y_\ve(\cdot) \le y_\infty$, it follows from (\ref{ZC4}) that
\be \label{ZG4}
\frac{ d\phi_\ve(\tau')}{d\tau'}  \ge  \left[ \frac{1}{3y_\infty^{2/3}}-\beta_\infty\right] \phi_\ve(\tau').
\ee
The inequality (\ref{ZB4}) follows from (\ref{ZG4}).
\end{proof}
We end this section with a short proof of asymptotic stability for the {\it linear} LSW model \cite{carr, cp}.
\begin{proposition}
 Let $\beta_0(\cdot), \ w_0(\cdot)$ with associated random variable $X_0$  be as in Proposition 4.2 and $w(x,t), x \ge 0, t \ge 0$, the solution of the linear LSW system (\ref{A3}), (\ref{B3}) with initial data $w_0(\cdot)$.  Thus the power $1/3$ in (\ref{A3}) is replaced by power $1$.  Suppose in addition that
 $\lim\{\beta_0(x) :  x\ra\|X_0\|_\infty\}=\beta_\infty>0$. Then $\La(t)$ defined by  (\ref{I1}) satisfies $\lim_{T\ra\infty } \La(T)/T=\beta_\infty$.
\end{proposition}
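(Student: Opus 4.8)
The plan is to exploit the fact that for the linear model the characteristic transformation $F(\cdot,t)$ is \emph{affine}, which collapses almost all of the machinery of \S3--\S5. With the power $1/3$ in (\ref{D3}) replaced by $1$ the characteristic ODE $dx/ds=-1+x/L(s)$ is linear, and integrating it with the integrating factor $\exp(-\int_0^s ds'/L(s'))$ gives
\[
F(x,t)=\e^{-M(t)}\,x+\int_0^t\e^{-M(s)}\,ds,\qquad M(t)=\int_0^t\frac{ds}{L(s)} .
\]
Thus $\pa F(x,t)/\pa x=\e^{-M(t)}=:m(t)$ does not depend on $x$, and $b(t):=F(0,t)=\int_0^t m(s)\,ds$ is a strictly increasing function of $t$ with $0<b(t)<\|X_0\|_\infty$ for all $t$ (the upper bound because $w_0(b(t))=w(0,t)>0$). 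I would also record that for the linear model the constraint forces $L(t)=\La(t)$: multiplying the equation by $x$, integrating by parts and using (\ref{B1}) gives $L(t)=1/w(0,t)=\La(t)$.

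The affine structure trivialises the transport of the beta function. Since $w(x,t)=w_0(F(x,t))$ with $F(\cdot,t)$ affine, one has $h(x,t)=\int_x^\infty w(x',t)\,dx'=m(t)^{-1}h_0(F(x,t))$ with $h_0(x)=\int_x^\infty w_0$, and the factors $m(t)$ cancel in the defining ratio (\ref{G2}), giving $\beta(x,t)=\beta_0(F(x,t))$ for all $x,t$ (this is also immediate from (\ref{K2}) once $F'$ is constant). In particular $\beta(0,t)=\beta_0(b(t))$, and feeding this into the identity $d\La/dt=\beta(0,t)$ of Proposition 4.2 — whose proof uses only that the coefficient of $\pa w/\pa x$ equals $1$ at $x=0$, hence applies verbatim here — yields
\[
\frac{d\La(t)}{dt}=\beta_0\big(b(t)\big),\qquad t>0 .
\]

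It then suffices to show $b(t)\to\|X_0\|_\infty$ as $t\to\infty$: granting this, $\beta_0(b(t))\to\beta_\infty$ by hypothesis, and therefore $\La(T)/T=\La(0)/T+T^{-1}\int_0^T\beta_0(b(t))\,dt\to\beta_\infty$ as the Cesàro mean of a bounded convergent function. To see $b(t)\to\|X_0\|_\infty$ I would argue in two steps. First, $\beta_0(\cdot)$ is bounded (it is as in Lemma 4.1), so the displayed identity gives $\La(t)\le\La(0)+t\sup\beta_0(\cdot)$; since $L=\La$ this forces $M(t)=\int_0^t ds/L(s)\to\infty$. Second, the mass constraint (\ref{B3}) applied to $w(x,t)=w_0(m(t)x+b(t))$ reads $m(t)^{-1}h_0(b(t))=1$, i.e.\ $h_0(b(t))=\e^{-M(t)}\to0$; since $b(\cdot)$ is increasing and $h_0$ is decreasing with $h_0(x)>0$ exactly when $x<\|X_0\|_\infty$, this forces $b(t)\to\|X_0\|_\infty$, completing the proof.

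I do not expect a genuine obstacle: once the affine form of $F(\cdot,t)$ is recognised, the only additional input is the elementary a priori bound $L(t)=\La(t)=O(t)$ that makes $M(t)\to\infty$. This is in sharp contrast with the nonlinear LSW equation, where one does not have $\beta(x,t)=\beta_0(F(x,t))$ and the convergence analogous to $b(t)\to\|X_0\|_\infty$ requires the almost-monotonicity of $\beta(\cdot,t)$ developed in \S5.
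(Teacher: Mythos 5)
Your proposal is correct and follows essentially the same route as the paper's proof: the identity (\ref{R1}) together with $\beta(0,t)=\beta_0(F(0,t))$ (equality in (\ref{K2}) because $F(\cdot,t)$ is affine for the linear model) and the limit $F(0,t)\ra\|X_0\|_\infty$, followed by averaging. The only difference is that you supply the details behind the step the paper merely asserts, namely $\lim_{t\ra\infty}F(0,t)=\|X_0\|_\infty$, via the explicit affine solution of the characteristic ODE, the bound $L(t)=\La(t)=O(t)$, and the mass constraint $h_0(F(0,t))=\e^{-M(t)}\ra 0$.
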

\begin{proof} The result follows from (\ref{R1}) provided we show that  $\lim_{t\ra\infty} \beta(0,t)=\beta_\infty$.  Now for the linear LSW model one has $w(x,t) = w_0(F(x,t))$, where $ F(\cdot,t)$ is a linear function for all $t\ge 0$ and $ \lim_{t\ra\infty} F(0,t)= \|X_0\|_\infty$. Since (\ref{K2}) implies that $ \beta(0,t)=\beta_0(F(0,t))$, the result follows.
\end{proof}

\section{Improved Lower Bounds on the Rate of Coarsening}
Our goal in this section is to remove the smallness restriction on $\sup\beta_0(\cdot)$ which was required in Proposition 4.3 for the proof of the lower bound on the rate of coarsening.
We begin by 
deriving the evolution equation for the function $\beta(x,t)$.  To do this we note that
\[	\beta(x,t) = c(x,t) \int^\infty_x \ w(x', t)dx' \Big/ w(x,t)^2,    \]
where $c(x,t)$ satisfies (\ref{A1}) and $w(x,t)$ equation (\ref{A3}).  It follows that $\beta(x,t)$ is a solution to the equation
\be \label{H4}
\frac {\pa \beta(x,t)}{\pa t} - \left[ 1 - \left\{ \frac x{L(t)} \right\}^{1/3} \right] \frac {\pa \beta}{\pa x}(x,t) = -\beta(x,t) g(x,t), 
\ee
where
\be \label{I4}
g(x,t) = \frac 1{3L(t)^{1/3}} \left\{ \frac 1{x^{2/3}} - \int^{\infty}_x w(x',t) \frac{dx'}{x'^{2/3}} \Big/  \int^{\infty}_x w(x',t)dx \right\}.
\ee
Since $w(\cdot,t)$ is a nonnegative  function $g(x,t)$ is also nonnegative, whence from (\ref{H4}) we may conclude that $\sup\beta(\cdot,t)$ is decreasing.  We can also see from (\ref{H4}), (\ref{I4}) that the time evolution preserves monotonicity of $\beta(\cd,t)$.
\begin{lem}  Suppose $\beta(\cd,t)$ satisfies $\sup\beta(\cd,t) \le 1$.  Then the function $g(\cd,t)$ is monotonic decreasing.
\end{lem}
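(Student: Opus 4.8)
The plan is to first put $g(\cdot,t)$ into a form in which monotonicity is transparent. Write $h(x,t)=\int_x^\infty w(x',t)\,dx'$ (the function of \S2 associated with $w(\cdot,t)$), and use the elementary identity $x^{-2/3}-x'^{-2/3}=\tfrac23\int_x^{x'}s^{-5/3}\,ds$ together with Fubini's theorem to get, for $x'\ge x$,
\[
\int_x^\infty w(x',t)\bigl[x^{-2/3}-x'^{-2/3}\bigr]\,dx'=\frac23\int_x^\infty s^{-5/3}\,h(s,t)\,ds .
\]
Dividing by $\int_x^\infty w(x',t)\,dx'=h(x,t)$ and substituting into (\ref{I4}) yields
\[
g(x,t)=\frac{2}{9\,L(t)^{1/3}}\;\frac{\Phi(x,t)}{h(x,t)},\qquad \Phi(x,t):=\int_x^\infty s^{-5/3}\,h(s,t)\,ds ,
\]
for $x$ in the interior of the support of $w(\cdot,t)$ (this also re-proves $g\ge 0$). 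Hence it suffices to show that $x\mapsto \Phi(x,t)/h(x,t)$ is decreasing.

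Next I would differentiate in $x$. Since $\partial_x\Phi(x,t)=-x^{-5/3}h(x,t)$ and $\partial_x h(x,t)=-w(x,t)$,
\[
\frac{\partial}{\partial x}\,\frac{\Phi(x,t)}{h(x,t)}=\frac{w(x,t)\,\Phi(x,t)-x^{-5/3}h(x,t)^2}{h(x,t)^2},
\]
so the lemma reduces to the pointwise inequality
\[
w(x,t)\int_x^\infty s^{-5/3}\,h(s,t)\,ds\ \le\ x^{-5/3}\,h(x,t)^2 .
\]

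To prove this I would invoke the hypothesis $\sup\beta(\cdot,t)\le 1$, which, as observed in \S2 following (\ref{J2}), is equivalent to $h(\cdot,t)$ being log-concave, i.e. to $x\mapsto w(x,t)/h(x,t)=-\partial_x\log h(x,t)$ being nondecreasing. Integrating this monotonicity from $x$ to $s\ge x$ gives the exponential bound $h(s,t)\le h(x,t)\exp\!\bigl[-\tfrac{w(x,t)}{h(x,t)}(s-x)\bigr]$; combining it with $s^{-5/3}\le x^{-5/3}$ for $s\ge x$ yields
\[
\int_x^\infty s^{-5/3}h(s,t)\,ds\ \le\ x^{-5/3}h(x,t)\int_x^\infty e^{-\frac{w(x,t)}{h(x,t)}(s-x)}\,ds\ =\ \frac{x^{-5/3}h(x,t)^2}{w(x,t)},
\]
which is the required inequality after multiplying by $w(x,t)$. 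Here $w(x,t)>0$ on the interior of the support because $w(\cdot,t)$ is monotone decreasing and nontrivial, so the division is legitimate.

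The only genuinely delicate step is the first one: recognizing that after the Fubini interchange $g(\cdot,t)$ becomes the ratio $\Phi/h$ with $\Phi'=-x^{-5/3}h$; once that structure is in hand, the conclusion is a one-line consequence of log-concavity. A minor point to record is the behaviour near $x=0$, where both $x^{-2/3}$ and $\Phi(x,t)/h(x,t)$ may diverge, so "monotone decreasing" is understood on the open support interval with $g(0^+,t)$ possibly equal to $+\infty$, which is consistent with (indeed forced by) the monotonicity just proved.
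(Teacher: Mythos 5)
Your proof is correct and follows essentially the same route as the paper: both rewrite $g(x,t)$ via integration by parts (your Fubini step) as $\tfrac{2}{9L(t)^{1/3}}\int_x^\infty s^{-5/3}h(s,t)\,ds/h(x,t)$, differentiate, and reduce to the inequality $w(x,t)\int_x^\infty s^{-5/3}h(s,t)\,ds\le x^{-5/3}h(x,t)^2$, which is established from $\sup\beta(\cdot,t)\le 1$ through the log-concavity of $h$ (the paper phrases this as convexity of $q=-\log h$ and bounds $q'(x)\le q'(x')$ inside the integral, while you use the integrated tangent-line bound $h(s,t)\le h(x,t)e^{-w(x,t)(s-x)/h(x,t)}$ — a cosmetic difference only).
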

\begin{proof}  We put
\[	h(x,t) = \int^\infty_x \ w(x', t)dx', \ \ h(x,t) = \exp[-q(x,t)].	\]
From (\ref{M2}) we have that
\be \label{J4} 1\bigg/ [\pa q(x,t)/\pa x] = \La(t) - x + \int^x_0 \ \beta(x',t)dx' .	\ee
It follows from (\ref{J4}) that if $\sup\beta(\cd,t) \le 1$ then $h(\cd,t)$ is an integrable function on $[0,\infty)$  Hence on integration by parts in (\ref{I4}) we have that
\[	g(x,t) = \frac 2{9L(t)^{1/3}} \ \frac 1{h(x,t)} \ \int^\infty_x \ h(x', t) \ \frac{dx'}{x'^{5/3}} \ ,	\]
whence
\be \label{K4}
\frac{\pa g}{\pa x} (x,t) = \frac 2{9L(t)^{1/3}} \left[ -\frac{1}{x^{5/3}} + \frac{w(x,t)}{h(x,t)^2} \int^\infty_x h(x',t) \frac{dx'}{x'^{5/3}} \right].   \ee
Observe from (\ref{J4}) that since $\sup \beta(\cd, t)  \le 1$ we have  $\pa^2 q(x,t) /\pa x^2 \ge 0, \ x > 0$. Hence
\begin{multline*}
\frac{w(x,t)}{h(x,t)^2} \int^\infty_x h(x',t) \frac{dx'}{x'^{5/3}} =
 \frac{\pa q(x,t)}{\pa x} \exp [q(x, t)] \int^\infty_x \exp [-q(x',t)] \frac{dx'}{x'^{5/3}}\\ 
\le \exp [q(x, t)] \frac 1{x^{5/3}}  \int^\infty_x \frac{\pa q}{\pa x'} (x', t) \exp [-q(x',t)]dx' = \frac 1{x^{5/3}}.
\end{multline*}
The result follows from (\ref{K4}). 
\end{proof}
\begin{corollary} Suppose $\beta(\cd,0)$ is monotonic increasing with $\sup\beta(\cd,0) \le 1$.  Then $\beta(\cd, t)$ is monotonic increasing with $\sup\beta(\cd,t) \le 1$ for all $t > 0$.
\end{corollary}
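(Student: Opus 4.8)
The plan is to combine the two mechanisms that were already isolated in the discussion preceding Lemma 5.1. The damping term $-\beta g$ on the right of (\ref{H4}) propagates the bound $\sup\beta(\cd,t)\le 1$ forward in time, and \emph{once} this bound is in force Lemma 5.1 makes $g(\cd,t)$ monotone decreasing in $x$; the sign $\pa g/\pa x\le 0$ is exactly what is needed to run a transport--equation comparison for the $x$--derivative of $\beta$. So the proof has two stages: first propagate $\sup\beta\le 1$, then propagate monotonicity.

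For the first stage I would note that $\beta\ge 0$ (because the function $h$ of (\ref{F2}) is nonnegative, decreasing and convex, so $\beta=h''h/(h')^2\ge 0$) and $g\ge 0$. Hence along any characteristic $x(s)$ solving (\ref{D3}), equation (\ref{H4}) reads $\tfrac{d}{ds}\beta(x(s),s)=-\beta g\le 0$, so $\beta$ is nonincreasing along characteristics; since every point $(x,t)$ with $x\ge 0,\ t>0$ is joined backwards to the point $(F(x,t),0)$ with $F(x,t)>0$, we get $\beta(x,t)\le\sup_{x'}\beta(x',0)\le 1$ for all $t>0$. This is precisely the ``$\sup\beta(\cd,t)$ is decreasing'' assertion already recorded after (\ref{I4}).

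For the second stage, having $\sup\beta(\cd,t)\le 1$ for every $t$, Lemma 5.1 applies and gives $\pa g(x,t)/\pa x\le 0$. I would then differentiate (\ref{H4}) in $x$ and set $u=\pa\beta/\pa x$; using $\pa_x[1-(x/L)^{1/3}]=-1/(3L^{1/3}x^{2/3})$ one obtains
\[
\frac{\pa u}{\pa t}-\left[1-\left(\frac{x}{L(t)}\right)^{1/3}\right]\frac{\pa u}{\pa x}
=-\left[\,g(x,t)+\frac{1}{3L(t)^{1/3}x^{2/3}}\,\right]u-\beta(x,t)\,\frac{\pa g}{\pa x}(x,t).
\]
The source term $-\beta\,\pa_x g$ is nonnegative by Lemma 5.1 together with $\beta\ge 0$. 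Reading this along the characteristics of (\ref{D3}) turns it into the scalar linear ODE $\tfrac{d}{ds}u=-p(s)u+f(s)$ with $p(s)=g+1/(3L^{1/3}x(s)^{2/3})$ and $f(s)=-\beta\,\pa_x g\ge 0$, whose solution is $u$ at time $t$ equal to $u$ at time $0$ times a positive exponential factor plus a nonnegative Duhamel integral. Since the characteristic through $(x,t)$ originates at $(F(x,t),0)$ with $F(x,t)>0$, and $u(\cd,0)=\pa\beta(\cd,0)/\pa x\ge 0$ by hypothesis, we conclude $u(x,t)=\pa\beta(x,t)/\pa x\ge 0$, i.e. $\beta(\cd,t)$ is monotone increasing.

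The only delicate points are technical: one must know $\beta(\cd,t)$ is smooth enough in $x$ to differentiate (\ref{H4}) (true when $w_0$ is smooth, with the general case handled by approximating $w_0$), and one must check that the coefficient $1/(3L(s)^{1/3}x(s)^{2/3})$, which is singular where a characteristic reaches $x=0$, remains integrable in $s$ along characteristics --- but that is the very integral appearing in (\ref{E3}) for $\pa F/\pa x$, which is finite, and in any case its sign is irrelevant to the positivity conclusion. I expect this regularity/approximation bookkeeping, rather than the comparison argument, to be the main obstacle; it can be sidestepped entirely by propagating the inequality $\beta(x_2,t)\ge\beta(x_1,t)$ for $x_1<x_2$ directly along the two characteristics through $(x_1,t)$ and $(x_2,t)$, using (\ref{H4}) and the integrated form of Lemma 5.1.
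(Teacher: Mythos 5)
Your argument is correct, and its engine is the same as the paper's (propagation of $\sup\beta\le 1$ from the sign of $g$, then Lemma 5.1 fed into the characteristics of (\ref{D3})), but your primary implementation differs from the paper's in a way worth noting. The paper never differentiates $\beta$ in $x$: it uses the closed-form representation (\ref{L4}), $\beta(x,t)=\beta(F(x,t),0)\exp\left[-\int_0^t g(x(s),s)\,ds\right]$, and observes that since trajectories of (\ref{D3}) do not intersect, the backward characteristics through $x_1<x_2$ stay ordered, so $F(\cdot,t)$ preserves order; then monotonicity of $\beta(\cdot,0)$ makes the first factor increasing in $x$, and monotone decrease of $g(\cdot,s)$ (Lemma 5.1, available at each $s$ because $\sup\beta(\cdot,s)\le 1$ persists) makes the exponential factor increasing in $x$ as well, giving the conclusion with no regularity beyond what defines $\beta$. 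Your main route---differentiating (\ref{H4}), setting $u=\pa\beta/\pa x$, and running a Duhamel/sign-propagation argument with source $-\beta\,\pa_x g\ge 0$---is a valid maximum-principle version of the same idea, but it purchases nothing extra and costs you the smoothness needed to justify the differentiation, an approximation of $w_0$ that must preserve both hypotheses, and the (harmless but extra) check that the singular coefficient $1/(3L^{1/3}x(s)^{2/3})$ is integrable along characteristics, which is the integral in (\ref{E3}). The ``sidestep'' you mention in your last sentence, comparing $\beta(x_1,t)$ and $\beta(x_2,t)$ directly via the integrated form along the two characteristics, is precisely the paper's proof, and is the cleaner way to finish.
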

\begin{proof} The solution to (\ref{H4}) is given by the formula,
\be \label{L4}
\beta(x,t) = \beta(F(x,t),0) \exp \left[ - \int^t_0 \ g(x(s),s) ds \right],	\ee
where $x(s), 0 \le s \le t$, is the solution to (\ref{D3}).  Noting that the trajectories of (\ref{D3}) do not intersect in the $(x,t)$ plane it follows from Lemma 5.1 and (\ref{L4}) that $\beta(\cd,t)$ is monotonic increasing. 
\end{proof}
We can rewrite the system (\ref{H4}), (\ref{I4}) in the  variables $(y,\tau)$ used in (\ref{B4}).  
With $y = x/\La(t)$ and  $\tau$ given by (\ref{C4}), we set  $w(x,t) = \La(t)^{-1} w^*(y,\tau)$ so that 
$w^*(\cdot,\tau)$ is normalized as $w^*(0,\tau) = 1$.  The conservation law (\ref{B3}) then becomes 
\be \label{Y4}
\int^\infty_0 w^*(y,\tau) dy = 1.
\ee
We may similarly define the function $\beta^*(\cdot,\cdot)$ by  $\beta(x,t) = \beta^*(y,\tau)$. From (\ref{H4}), (\ref{I4}) it follows that the function $\beta^*(\cdot,\cdot)$ satisfies the equation
\be \label{M4}
\frac {\pa \beta^*}{\pa \tau} (y,\tau) - \left[ 1 - \ga(\tau)^{1/3} \; y^{1/3} + \beta^*(0,\tau)y \right] \frac {\pa \beta^*}{\pa y} (y,\tau) 
= -\beta^*(y,\tau) g^*(y,\tau),
\ee
where $g^*(y,\tau)$ is given by the formula,
\be \label{N4}
g^*(y,\tau) = \frac {\ga({\tau})^{1/3}}{ 3} \left\{ \frac 1{y^{2/3}} - \int^\infty_y w^*(y',\tau) \frac{dy'}{y'^{2/3}}\;\Big/ \int^\infty_y w^*(y', \tau)dy' \right\}.
\ee
If we set
\[  h^*(y,\tau) = \int^\infty_y w^*(y',\tau)dy', \ \ \ h^*(y,\tau) = \exp [-q^*(y,\tau)],	\]
we have that
\be \label{O4}
1/\pa q^*(y,\tau)/\pa y = 1 - y + \int^y_0 \; \beta^*(y',\tau)dy'.   
\ee
Letting $X^*_\tau$ be the random variable associated with the positive decreasing function $w^*(\cdot,\tau)$, then $\beta(\cd,\tau)$ is a function with domain $[0, \|X^*_\tau\|_\infty)$. It has the property that the RHS of (\ref{O4}) is strictly positive for $y <  \|X^*_\tau\|_\infty$ but converges to 0 as $y \ra  \|X^*_\tau\|_\infty$.  We can now use Lemma 5.1 to obtain an improvement of Proposition 4.3.
\begin{proposition}  Let $\beta_0(\cdot), \ w_0(\cdot)$ with associated random variable $X_0$  be as in Proposition 4.3 and $w(x,t),  \ x \ge 0, t \ge 0$, the solution of the LSW system (\ref{A3}), (\ref{B3}) with initial data $w_0(\cdot)$.  Assume there exists $\del>0$  such that $\beta_0(z)\le 1$ for $ \|X\|_\infty-\del\le z < \|X\|_\infty$, and in addition $\liminf\{\beta_0(x) :  x\ra\|X\|_\infty\}$ is positive.  Then the inequality (\ref{O1}) holds for some constant $C > 0$ depending only on $\beta_0(\cdot)$. 
\end{proposition}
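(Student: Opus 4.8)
The plan is to deduce (\ref{O1}) from (\ref{R1}): since $\La'(t)=\beta(0,t)\ge 0$, it is enough to prove $\beta(0,t)\ge c>0$ for all large $t$, with $c$ depending only on $\beta_0(\cd)$. Normalize $L(0)=1$, set $a=\|X_0\|_\infty$, and fix $\del>0$ so small that $\tfrac12\liminf_{z\ra a}\beta_0(z)\le\beta_0(z)\le 1$ on $[a-\del,a)$. First I would put $\beta(\cd,t)$ in the range of Lemma 5.1 and Corollary 5.1: from the characteristic representation $\beta(x,t)=\beta_0(F(x,t))\exp[-\int_0^t g(x(s),s)\,ds]$ of (\ref{H4}) and $g\ge 0$ one has $\beta(x,t)\le\beta_0(F(x,t))$ for every $x$ in the support of $w(\cd,t)$, and since $F(\cd,t)$ is increasing these arguments lie in $[F(0,t),a)$. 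Granting for the moment that $F(0,t)\uparrow a$ (the crux; see below), pick a time $t_\del$ with $F(0,t_\del)\ge a-\del$. Then $\sup\beta(\cd,t)\le 1$ for $t\ge t_\del$, so Lemma 5.1 makes $g(\cd,t)$ monotone decreasing there, and $\beta(\cd,t_\del)$ is bounded away from $0$ (its first factor above is $\ge\tfrac12\liminf$, and $\exp[-\int_0^{t_\del}g]$ is bounded below uniformly in $x$, using $x(s)\ge\tfrac12(t_\del-s)$ near $s=t_\del$ to handle the integrable singularity of $g$). Running the comparison in the proof of Corollary 5.1 with base time $t_\del$ (characteristics of (\ref{D3}) do not cross, $F(\cd,t)$ is increasing, $g(\cd,\cd)$ is decreasing) then gives $\beta(x,t)\ge c(\del)\,\beta(0,t)$ uniformly in $x$ --- the ``almost monotone increasing'' property --- and Lemma 2.1 with $\beta_\infty=1$ together with (\ref{AA3}) bound $\ga(t)=\La(t)/L(t)$ and $\|X_t\|_\infty/\La(t)$ by absolute constants for $t\ge t_\del$.

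The hard part will be showing that the solution does not collapse, i.e.\ $w(0,t)=1/\La(t)\ra 0$, equivalently $F(0,t)\uparrow a$; I expect this to be the main obstacle. Since $F(0,t)$ is strictly increasing with $dF(0,t)/dt=\pa F(0,t)/\pa x>0$, it has a limit $b\le a$, and $\La(t)=1/w_0(F(0,t))$. Suppose $b<a$. Then $w(0,t)\ge w_0(b)>0$, so $\La(\cd)$ is bounded, and hence so is $L(\cd)$ because $L(t)^{1/3}=\tfrac13\int_0^\infty x^{-2/3}w(x,t)\,dx/w(0,t)\le\tfrac43\La(t)^{1/3}$ (split the integral at $x=\La(t)$); by (\ref{E3}), boundedness of $L(\cd)$ and of $x(\cd)\le b$ then forces $\pa F(0,t)/\pa x\ra 0$ exponentially. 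Now the estimate of the last step below uses only boundedness of $\La(\cd)$ and $L(\cd)$, not smallness of $\sup\beta_0$, and yields $\beta(0,t)\ge c_1\beta_0(F(0,t))$. If $\liminf_{z\uparrow b}\beta_0(z)>0$ this forces $\La(T)\ge\La(0)+c_1\int_0^T\beta_0(F(0,t))\,dt\ra\infty$ as $T\ra\infty$ (since $F(0,t)\ra b$), contradicting boundedness of $\La$; so $b<a$ is possible only in the residual case in which $\beta_0$ degenerates to $0$ as $z\uparrow b$ --- which forces $w_0$ to be essentially flat there --- and that case would have to be excluded by a separate argument.

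Finally, the smallness-free version of the estimate of $\S4$, granting $F(0,t)\ra a$. From (\ref{A4}), the convexity of $F(\cd,t)$ (so $\pa F(x,t)/\pa x$ increases in $x$), and $0<\pa F/\pa x\le 1$,
\[ \beta(0,t)\ \ge\ \beta_0(F(0,t))\,\inf\Big\{\,[\pa F(0,t)/\pa x]\big/[\pa F(x,t)/\pa x]\ :\ 0\le x<\|X_t\|_\infty\,\Big\}. \]
Passing to the rescaled variables of (\ref{C4})--(\ref{E4}), the infimum equals $[\pa G(0,\tau)/\pa y]/[\pa G(y^*,\tau)/\pa y]$ with $y^*=\|X_t\|_\infty/\La(t)$ bounded, and by (\ref{F4}) this is $\exp[-\tfrac13\int_0^\tau\ga(\tau')^{1/3}\{y_0(\tau')^{-2/3}-y_{y^*}(\tau')^{-2/3}\}\,d\tau']$. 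I would bound this integral uniformly in $\tau$ exactly as in $\S4$: split at $\tau-\eta$; near $\tau'=\tau$ the integrand is $O((\tau-\tau')^{-2/3})$, hence integrable, while on $[0,\tau-\eta]$ one uses that $y_0(\tau')$ is bounded below (the characteristic reaching $0$ at time $\tau$ rises at rate $\ge\tfrac12$ going backward while it is small) together with the Gronwall bound (\ref{ZC4})--(\ref{ZG4}), $y_{y^*}(\tau')-y_0(\tau')\le y^*e^{-k(\tau-\tau')}$. The one change from $\S4$ is that the smallness of $\sup\beta_0$ used there --- to keep $\ga$ near $1$, $y^*$ near $1$, and the trajectory $y_{y^*}(\cd)$ below the larger root of the nonlinearity $f_\al$ in (\ref{ZD4}) --- is supplied instead by $\sup\beta(\cd,t)\le 1$ (hence $\ga(\cd)$ and $y^*$ bounded) together with the monotonicity of $g(\cd,t)$ and the almost-monotonicity of $\beta(\cd,t)$ from the first step, which keep $y_{y^*}(\cd)$ bounded and furnish $k$. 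Since $\liminf_{z\ra a}\beta_0(z)>0$ and $F(0,t)\ra a$, we have $\beta_0(F(0,t))\ge\tfrac12\liminf_{z\ra a}\beta_0(z)=:c_0>0$ for $t$ large, so $\beta(0,t)\ge c_1c_0$ there and $\La(T)\ge CT$ for $T\ge 1$ after adjusting $C$; every constant depends only on $\beta_0(\cd)$.
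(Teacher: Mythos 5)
Your overall framing (reduce to a uniform positive lower bound on $\beta(0,t)$ via (\ref{R1}), get into the regime $\sup\beta(\cdot,t)\le 1$ so that Lemma 5.1/Corollary 5.1 apply, and exploit an almost-monotonicity of $\beta(\cdot,t)$) matches the set-up of the paper's proof, but your central quantitative step is not the paper's and, as sketched, it does not work. You propose to rerun the Proposition 4.3 estimate at all large times, with the smallness of $\sup\beta_0$ "supplied instead by $\sup\beta(\cdot,t)\le1$", the Gronwall bound (\ref{ZC4})--(\ref{ZG4}) furnishing an exponential contraction rate $k$. But the contraction exponent there is $\frac{\ga(\tau')^{1/3}}{3}\int_0^1[\la y_0+(1-\la)y_{y^*}]^{-2/3}d\la-\beta^*(0,\tau')$, and since $y^*=\|X_t\|_\infty/\La(t)\ge 1$ the positive part is at most about $1/3$ once the trajectories have risen to size one; whenever $\beta^*(0,\cdot)$ is of order one over a stretch of time (which cannot be excluded -- it is in fact the good regime for the theorem) the exponent is negative, the gap between the backward characteristics through $0$ and through $y^*$ grows rather than contracts, and your claimed uniform bound on $\int_0^\tau\ga^{1/3}\{y_0(\tau')^{-2/3}-y_{y^*}(\tau')^{-2/3}\}d\tau'$ does not follow from the cited mechanism ("monotonicity of $g$ and almost-monotonicity of $\beta$ furnish $k$" is asserted, not proved). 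The paper's proof is structured precisely to avoid ever needing contraction when $\beta^*(0,\cdot)$ is not small: it proves the conditional statement that if $\beta^*(0,\tau_0)=\eta\le\eta_0$ and $\beta^*(0,\tau)\le\eta$ on $[\tau_0,\tau_1]$, then $\beta^*(0,\tau)\ge\kappa\eta$ there. In that regime the rate in (\ref{Q4})--(\ref{R4}) is $\ge \frac{1}{3\cdot 2^{2/3}}-\eta_0>0$ when $\|X^*_\tau\|_\infty\le 2$, while in the complementary case $\|X^*_\tau\|_\infty>2$ it uses (\ref{O4}) and the (almost) monotonicity of $\beta^*(\cdot,\tau)$ to get $\beta^*(2,\tau)\ge 1/2$, hence $\int_{\tau_0}^\tau g^*(y_2(\tau'),\tau')d\tau'\le\log 2$ along that characteristic, and transfers this to the zero characteristic via the gradient bound $|\pa g^*/\pa y|\le 2\ga^{1/3}/9y^{5/3}$ together with (\ref{R4}). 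This dichotomy/persistence argument is the key idea your proposal is missing, and without it (or a genuine substitute) the main estimate has a hole.

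Secondly, your non-collapse step is incomplete by your own admission: you must exclude the scenario $F(0,t)\ra b<\|X_0\|_\infty$ with $\liminf_{z\ra b}\beta_0(z)=0$, and you defer that to "a separate argument"; moreover your treatment of the collapse scenario again invokes the flawed step above, and before the support image enters $[\|X_0\|_\infty-\del,\|X_0\|_\infty)$ you do not have $\sup\beta(\cdot,t)\le1$, so Lemma 5.1 is unavailable there. To be fair, the paper itself compresses this point into the sentence that it is "evident" that $0<\inf\beta(\cd,T)\le\sup\beta(\cd,T)\le1$ for some $T$, which likewise presupposes that $F(0,T)$ eventually exceeds $\|X_0\|_\infty-\del$; but the paper's subsequent argument then yields the pointwise lower bound on $\beta(0,t)$ for all later times without ever needing $\La(t)\ra\infty$ as an a priori input, whereas in your scheme the missing contraction estimate and the missing non-collapse case are both load-bearing. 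As it stands, the proposal is a plausible plan with the hardest two steps unproved.
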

\begin{proof} It is evident that there exists $T > 0$ such that   $0<\inf \beta(\cd,T) \le \sup \beta(\cd,T)\le1$.  We can assume therefore wlog that this inequality holds for $T=0$. 
We also assume for the moment that $\beta(\cd,0)$ is monotonic increasing, whence $\beta^*(\cd,\tau)$ is also monotonic increasing, $\tau\ge 0$.  Suppose now that $0 \le \tau_0 < \tau_1$, $\beta^*(0,\tau_0) = \eta >0$ and $\beta^*(0,\tau) \le \eta$ for $\tau_0 \le \tau \le \tau_1$.  We will show there exists $\eta_0 > 0$ such that if $\eta \le \eta_0$ then $\beta^*(0,\tau) \ge \kappa\eta$ for $\tau_0 \le \tau \le \tau_1$, where $\kappa > 0$ is a universal constant.  To see this we consider the value of $\|X^*_\tau\|_\infty$.  Choosing $0 < \eta_0 < 1/6$ to satisfy $1 - 2^{1/3} + 2\eta_0 < 0$ it is clear that if $ \|X^*_\tau\|_\infty\le 2$ then $\|X^*_{\tau'}\|_\infty \le 2$ for $\tau_0 \le \tau' \le \tau$.  Alternatively if $\|X^*_\tau\|_\infty > 2$ it follows from (\ref{O4}) and the monotonicity of $\beta^*(\cd,\tau)$ that $\beta^*(2,\tau) \ge 1/2$.

Just as in (\ref{L4}) we have that for $\tau \ge \tau_0$,
\be \label{P4}
\beta^*(0,\tau) = \beta^*(y_1 (\tau_0), \tau_0) \exp \left[- \int^\tau_{\tau_0} g^*(y_1(\tau '), \tau ') d\tau '\right],
\ee
where $y_1(\tau')$ is the solution to (\ref{B4}) with $y_1(\tau)=0$.  More generally, if $y_1(\tau'), y_2(\tau')$ are two solutions of (\ref{B4}) with $y_1(\tau) = y_1, y_2(\tau) = y_2$, then one sees that for $\tau' < \tau$, 
\begin{multline} \label{Q4}
y_2(\tau') - y_1(\tau') = \\
(y_2 - y_1) \exp \left[ \int^\tau_{\tau '} d\tau'' \left\{ \beta^*(0,\tau'') - \int^1_0 \ga(\tau '')^{1/3} /3\left[  \la y_1(\tau '') + (1-\la)y_2(\tau '')\right]^{2/3} \; d\la \right\} \right].
\end{multline}

It follows from (\ref{Q4}) that if $0 \le y_1 \le y_2 \le 2$, then there exists $\del > 0$ depending only on $\eta_0$ such that
\be \label{R4}
0 \le y_2(\tau ') - y_1(\tau') \le (y_2 - y_1) \exp [-\del(\tau - \tau ')], \ \ \tau_0 \le \tau ' \le \tau.
\ee
Choosing now $y_1 = 0, \ y_2 = \|X^*_\tau\|_\infty \le 2$, in (\ref{R4}) we conclude from (\ref{N4}) that for some constant $A$,
\[	g^*(y_1(\tau'), \tau') \le A \exp [-\del (\tau - \tau')], \ \ \tau' \le \tau -1,   \]
whence we conclude that
\be \label{S4}
\beta^*(0,\tau) \ge \kappa \eta, \ \  \tau_0 \le \tau  \le \tau_1.
\ee
Alternatively suppose $\beta^*(2,\tau) \ge 1/2$.  Then from (\ref{P4}) it follows that
\[	\int^\tau_{\tau_0} g^*(y_2(\tau'), \tau') d\tau' \le \log 2,  \]
where $y_2 = 2$.  Observe from (\ref{K4}), (\ref{N4}) that
\[	|\pa g^*(y, \tau ') / \pa y| \le 2\ga(\tau ')^{1/3} \Big/ 9y^{5/3}, \ \ y > 0.	\]
Since $\| \ga(\cd)\|_\infty < \infty$ we conclude from the last 2 inequalities and (\ref{R4}) that
\[	\int^\tau_{\tau_0} g^*(y_1(\tau'), \tau') d\tau' \le K,	\]
where $y_1 = 0$ and $K$ depends only on $\| \ga(\cd)\|_\infty$.  Hence again (\ref{S4}) holds.

We have proved the result under the assumption that $\beta^*(\cd,0)$ is an increasing function.  To extend it to nonincreasing $\beta^*(\cd,0)$ we observe from (\ref{L4}) that we can choose $T_0$ sufficiently large so that there exists $\del<1$  such that 
\be \label{T4}
\beta^*(y,\tau) \ge (1 - \del) \beta^*(y',\tau), \ \ 0 \le y' < y, \ \ \tau \ge T_0.
\ee
We may now argue as before using (\ref{T4}) to replace the strict monotonicity of $\beta^*(\cd,\tau)$. 
\end{proof}
We consider the evolution of the function $w^*(y,\tau)$ defined after Corollary 5.1.  Since $w(x,t)$ is a solution to (\ref{A3}) it follows that $w^*(y,\tau)$ satisfies the equation,
\be \label{U4}
\frac {\pa w^*}{\pa \tau}\; (y,\tau) - \left[ 1-\ga(\tau)^{1/3} \; y^{1/3} + \beta^*(0,\tau)y\right] \frac {\pa w^*}{\pa y}\; (y,\tau) = \beta^*(0,\tau)w^*(y,\tau).
\ee
Hence $w^*(y,\tau)$ is given in terms of the initial data by
\be \label{V4}
w^*(y,\tau) = w^*\Big( F^*(y,\tau), 0 \Big) \exp \left[ \int^\tau_0 \beta^*(0, \tau')d\tau' \right],
\ee
where $F^*(y,\tau) = y(0)$ and $y(\tau ')$, $0 \le \tau ' \le \tau$, is the solution to (\ref{B4}).  Now the self-similar solutions to the LSW system (\ref{A3}), (\ref{B3}) correspond to solutions of (\ref{U4}) which are independent of $\tau$.  We can easily obtain formulas for these by solving (\ref{U4}).  Thus for $0 < \al < 4/27$ the function $z \ra 1-z^{1/3} + \alpha z, \ z > 0$, has 2 nondegenerate zeros which coalesce to a single degenerate zero as $\al \ra 4/27$.  Let $a_\al > 1$ be the minimum zero and defined the function $\Ga_\al$ by
\[	\Ga_\al(z) = \int^z_0 \ \frac{dz'}{1-z'^{1/3} + \al z'}, \ \ \ \ 0 \le z < a_\al.	\]
For each $\al$ there is a time independent solution $w^*(\cdot)$ of (\ref{U4}),
\be \label{W4}
w^*(y) = \exp \left[ -\al \; \Ga_\al(\ga y)\right], \ \ \ 0 \le y < a_\al/\ga,
\ee
where $\ga$ is given by the formula,
\be \label{X4}
\ga = \int^{a_\al}_0 \exp \left[ -\al \; \Ga_\al(z)\right] dz.
\ee
If we set $\beta^*(0,\tau) \equiv \al\ga, \ \ga(\tau) \equiv \ga$, then the function (\ref{W4}) satisfies (\ref{U4}).  Evidently (\ref{W4}) implies that $w^*(0) = 1$ and (\ref{X4}) that (\ref{Y4}) holds.  Observe that by integrating (\ref{U4}) over the interval $0 < y < a_\al/\ga$ we conclude that
\be \label{Z4}
\int^{a_\al}_0 z^{-2/3} \exp \left[ -\al \; \Ga_\al(z)\right] dz = 3.
\ee

Consider now the function $g_\al(z)$ defined by
\be \label{AA4}
g_\al(z) = \frac \al{1 - z^{1/3} + \al z}  \exp \left[ \al \; \Ga_\al(z)\right] \int^{a_\al}_z \exp \left[ -\al \; \Ga_\al(z')\right] dz', \  \ 0 < z < a_\al.
\ee
Then it is clear that the function $\beta^*(y)$ associated with the function $w^*(y)$ of (\ref{W4}) is given by $\beta^*(y) = g_\al(\ga y)$, $0 < y < a_\al/\ga$.  It follows from the method used in Proposition 5.1 that $\beta^*(y)$ is an increasing function.  We shall prove this separately.

\begin{lem}  Suppose $0 < \al < 4/27$.  Then the function $g_\al(z), \ 0 < z < a_\al$, is monotonic increasing. Furthermore $g_\al(0)=\al\ga$ and  $\lim_{z \ra a_\al} \ g_\al(z) = 3\al a_\al^{2/3} < 1$.
\end{lem}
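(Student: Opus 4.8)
The plan is to convert the defining formula (\ref{AA4}) for $g_\al$ into a ratio of two elementary integrals, from which all three assertions drop out. Since $\Ga_\al'(z) = 1/(1 - z^{1/3} + \al z)$, one checks directly that
\be \label{ibp52}
\frac{d}{dz}\Big[ (1 - z^{1/3} + \al z)\, e^{-\al\Ga_\al(z)}\Big] = - \frac{1}{3}\, z^{-2/3}\, e^{-\al\Ga_\al(z)} .
\ee
The function $e^{-\al\Ga_\al(z)}$ is positive and decreasing on $[0,a_\al)$, hence bounded there by $1$, so the left-hand side of (\ref{ibp52}) tends to $0$ as $z \to a_\al$; integrating (\ref{ibp52}) from $z$ to $a_\al$ therefore gives
\be \label{mid52}
(1 - z^{1/3} + \al z)\, e^{-\al\Ga_\al(z)} = \frac{1}{3} \int_z^{a_\al} z'^{-2/3}\, e^{-\al\Ga_\al(z')}\, dz', \quad 0 \le z < a_\al .
\ee
Substituting (\ref{mid52}) into (\ref{AA4}) yields the representation
\be \label{rep52}
g_\al(z) = \frac{3\al \int_z^{a_\al} e^{-\al\Ga_\al(z')}\, dz'}{\int_z^{a_\al} z'^{-2/3}\, e^{-\al\Ga_\al(z')}\, dz'}, \quad 0 < z < a_\al .
\ee

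From (\ref{rep52}) the conclusions follow quickly. Since $z' \mapsto z'^{-2/3}$ is positive and strictly decreasing, bounding it on $[z,a_\al]$ in the denominator of (\ref{rep52}) below by $a_\al^{-2/3}$ and above by $z^{-2/3}$ gives $3\al z^{2/3} < g_\al(z) < 3\al a_\al^{2/3}$ for $0 < z < a_\al$; letting $z \to a_\al$ squeezes $g_\al(z) \to 3\al a_\al^{2/3}$, while evaluating (\ref{AA4}) at $z = 0$ --- where the prefactor equals $\al$, $\Ga_\al(0) = 0$, and the remaining integral equals $\ga$ by (\ref{X4}) --- gives $g_\al(0) = \al\ga$. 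For monotonicity I would differentiate (\ref{rep52}) and simplify with (\ref{mid52}); this yields
\be \label{der52}
g_\al'(z) = \frac{g_\al(z) - 3\al z^{2/3}}{3 z^{2/3}(1 - z^{1/3} + \al z)},
\ee
whose numerator is positive on $(0,a_\al)$ by the lower bound just obtained and whose denominator is positive there because $a_\al$ is the least zero of $1 - z^{1/3} + \al z$; hence $g_\al$ is strictly increasing. (Equivalently, differentiating the quotient in (\ref{rep52}) directly reduces monotonicity to the evident inequality $\int_z^{a_\al}\big(z^{-2/3} - z'^{-2/3}\big)\, e^{-\al\Ga_\al(z')}\, dz' > 0$.)

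It remains to check that $3\al a_\al^{2/3} < 1$. From $1 - a_\al^{1/3} + \al a_\al = 0$ one gets $\al a_\al = a_\al^{1/3} - 1$, hence $3\al a_\al^{2/3} = 3(1 - a_\al^{-1/3})$, and so it suffices to show $a_\al < 27/8$. But $1 - z^{1/3} + \al z$ equals $1$ at $z = 0$ and equals $-1/2 + 27\al/8 < 0$ at $z = 27/8$ when $\al < 4/27$, so its least zero satisfies $a_\al < 27/8$; therefore $a_\al^{-1/3} > 2/3$ and $3(1 - a_\al^{-1/3}) < 1$. The only point needing any care is the justification of (\ref{mid52}) --- the vanishing of the boundary term at $a_\al$ and the convergence of the integrals in (\ref{rep52}) --- both of which follow at once from the boundedness of $e^{-\al\Ga_\al(\cdot)}$ on $[0,a_\al)$. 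The real content of the argument is finding the representation (\ref{rep52}), which rests on the identity (\ref{ibp52}); once it is in hand each assertion is a one-line consequence, whereas a direct attack on the monotonicity of (\ref{AA4}) would be decidedly more awkward.
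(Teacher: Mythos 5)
Your proof is correct, and its backbone is the same as the paper's: the identity
$\big(1-z^{1/3}+\al z\big)e^{-\al\Ga_\al(z)}=\tfrac13\int_z^{a_\al}z'^{-2/3}e^{-\al\Ga_\al(z')}\,dz'$
is exactly the paper's (\ref{AC4}) (you obtain it by integrating the exact derivative with a vanishing boundary term, the paper by matching derivatives and endpoint values), and your formula for $g_\al'$ is just the paper's relation (\ref{BZ4}) rewritten, so in both arguments monotonicity reduces to $g_\al(z)>3\al z^{2/3}$, which both establish from that identity. Where you genuinely diverge is in the two remaining claims. For the limit at $a_\al$ the paper expands $1/(1-z^{1/3}+\al z)$ about $z=a_\al$ to get the local formula (\ref{AB4}), which also yields $g_\al'(a_\al)$ and is reused later in the paper; you instead squeeze $3\al z^{2/3}<g_\al(z)<3\al a_\al^{2/3}$ directly from your ratio representation, which is shorter and avoids the expansion but gives no information about the approach to the limit. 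You also supply an explicit proof that $3\al a_\al^{2/3}<1$ (via $\al a_\al=a_\al^{1/3}-1$ and $a_\al<27/8$), a point the paper leaves implicit in its earlier discussion of the zeros of $z\mapsto1-z^{1/3}+\al z$ (equivalently, $f_\al'(a_\al)<0$ at the smaller, nondegenerate zero gives $3\al a_\al^{2/3}<1$ at once). Net effect: same key identity, but your packaging of it as a ratio of two integrals makes the endpoint value, the limit, and the lower bound driving monotonicity all one-line consequences, at the cost of the finer asymptotics near $a_\al$ that the paper's expansion provides.
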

\begin{proof} It is evident from (\ref{X4}) that $g_\al(0) = \al\ga$.  To find the limit of $g_\al(z)$ as $z \ra a_\al$ we need to expand the function (\ref{AA4}) about $z=a_\al$.  We write
\[	\frac 1{1-z^{1/3} + \al z} = \frac{ 3a^{2/3}_\al} {(a_\al -z)(1-3\al a^{2/3}_\al)} - f_\al(z),   \]
where $f_\al(z)$ is nonnegative and $f_\al(a_\al) = 1/a^{1/3}_\al(1-3\al a^{2/3}_\al)^2$.

Then from (\ref{AA4}) we see that
\be \label{AB4}
g_\al(z) = 3\al a^{2/3}_\al - 2\al(a_\al - z) \Big/ a^{1/3}_\al (2 - 3 \al a^{2/3}_\al) + O[(a_\al - z)^2].
\ee
Note that as $\al \ra 4/27, \ g_\al(a_\al) = 3 \al a^{2/3}_\al \ra 1$ and $g'_\al(a_\al)$ remains bounded.  For $\al \ra 0$, $g'_\al(a_\al)/g_\al(a_\al) \ra 1/3$.  To see that $g_\al$ is monotone increasing we see from (\ref{AA4}) that
\be \label{BZ4}
	\frac d{dz} \left[ \Big( 1 - z^{1/3} + \al z\Big) g_\al(z) \right] = \al g_\al(z) - \al \  ,   
\ee
whence it is sufficient to show that $g_\al(z) \ge 3 \al z^{2/3}, \ 0 < z < a_\al$.  We can see from (\ref{AB4}) that this is the case for $z$ close to $a_\al$.  In fact $g_\al(z) > 3\al z^{2/3}$ is equivalent to 
$3 \al a^{2/3}_\al - 3\al z^{2/3} > g_\al(a_\al) - g_\al(z)$.  By (\ref{AB4}) one has $g'_\al(a_\al) =2\al / a_\al^{1/3}(2 - 3 \al a^{2/3}_\al) < 2\al / a_\al^{1/3}$ whence the inequality holds for $z$ sufficiently close to $a_\al$.  To prove it for all $z, \ 0 < z < a_\al$, observe that
\[  \al/g_\al(z) = \big( 1 - z^{1/3} + \al z\Big) \exp[-\al \Ga(z)] \; \Big/ \; \int^{a_\al}_z \exp[-\al \Ga(z')]dz'.  \]
The inequality follows then if we can show that
\be \label{AC4}
\big( 1 - z^{1/3} + \al z\Big) \exp[-\al \Ga(z)] = \int^{a_\al}_z \frac 1{3z'^{2/3}} \exp[-\al \Ga_\al(z')]dz'.
\ee
Observe that (\ref{AC4}) holds for $z = a_\al$ and also holds for $z=0$ by (\ref{Z4}).  To show it holds for all $z$, $0 < z < a_\al$ we easily verify that the derivatives of both sides of (\ref{AC4}) agree.  Finally observe from (\ref{BZ4}) that since $(1-z^{1/3} + \al z ) g'_\al(z) \big/ g_\al (z) \le 1/3z^{2/3}$ then $\log g_\al(z)$ has total variation which is uniformly bounded for $0 < \al < 4/27$. 
\end{proof}
Next we extend the result of Proposition 5.1 so as to remove the restriction $\sup\beta_0(\cdot) \le 1$, but in removing this restriction we need to impose the extra condition that $\beta_0(\cdot)$ converges to a limit at the end of its support.  
\begin{proposition}
Let $\beta_0(\cdot), \ w_0(\cdot)$ with associated random variable $X_0$  be as in Proposition 4.3 and $w(x,t),  \ x \ge 0, t \ge 0$, the solution of the LSW system (\ref{A3}), (\ref{B3}) with initial data $w_0(\cdot)$.  Assume that $\sup\beta_0(\cdot)<\infty$, and in addition $\lim\{\beta_0(x) :  x\ra\|X\|_\infty\}$ exists and  is positive.  Then the inequality (\ref{O1}) holds for some constant $C > 0$ depending only on $\beta_0(\cdot)$. 
\end{proposition}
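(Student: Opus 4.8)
The plan is to reduce the assertion to Proposition 5.1 when $\beta_*:=\lim\{\beta_0(x):x\ra\|X\|_\infty\}$ satisfies $\beta_*\le1$, and to rework the proof of Proposition 5.1 when $\beta_*>1$. In either case, by Proposition 4.2 it is enough to produce $\eta_0>0$ with $\beta(0,t)\ge\eta_0$ for all large $t$ (integrating $d\La/dt=\beta(0,t)$ then gives $\La(T)\ge\La(T_0)+\eta_0(T-T_0)$), and this is cleanest in the rescaled variables $(y,\tau)$ of (\ref{B4}), (\ref{C4}), where it becomes $\beta^*(0,\tau)\ge\eta_0$ for $\tau$ large; by dilation invariance and a time shift one may ask for this from $\tau=0$. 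The one new structural input is that the endpoint limit $\beta_*$ propagates: writing (\ref{A4}) as $\beta(x,t)=\beta_0(F(x,t))\,\kappa(x,t)$ with $\kappa(x,t)\in(0,1]$ the factor not containing $\beta_0$, and using $F(x,t)\ge F(0,t)\ra\|X\|_\infty$ as $t\ra\infty$, for every $\ve>0$ there is $T_\ve$ with $|\beta_0(F(x,t))-\beta_*|<\ve$ for all $x\ge0$, $t\ge T_\ve$; thus $\beta(\cd,t)=(\beta_*+O(\ve))\,\kappa(\cd,t)$ uniformly, and in particular $\lim\{\beta^*(y,\tau):y\ra\|X^*_\tau\|_\infty\}=\beta_*$ for all $\tau$.

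If $\beta_0(z)\le1$ for all $z$ close enough to $\|X\|_\infty$, the hypotheses of Proposition 5.1 hold and the conclusion follows at once. This is automatic when $\beta_*<1$, when $\sup\beta_0\le1$, and also when $\beta_*=1$ and $\|X\|_\infty<\infty$: in the last case (\ref{M2}) forces $1/q'(x)=\av{X}-x+\int_0^x\beta_0$ to decrease to $0$ at the endpoint of the support, which is impossible if $\beta_0>1$ there.

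The remaining case, $\beta_*\ge1$ with $\|X\|_\infty=\infty$ (polynomially or exponentially decaying tail, e.g. $c_0(x)=K_\ve/(1+x)^{2+\ve}$), is where new work is needed, since $\sup\beta(\cd,\tau)\ge\beta_*\ge1$ is preserved by the flow, so Lemma 5.1 and the monotonicity of $g(\cd,\tau)$ underlying Proposition 5.1 are unavailable. I would re-run the argument of Proposition 5.1 with three substitutes. First, $\ga(\cd)$ is still bounded: Lemma 2.1 with $\beta_\infty=\sup\beta_0$ (which dominates $\sup\beta(\cd,\tau)$) gives $1\le\ga(\cd)\le C(\sup\beta_0)$. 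Second, because $\|X^*_\tau\|_\infty=\infty$ the right side of (\ref{O4}) is strictly positive for all $y$, hence $\int_0^y\beta^*(y',\tau)\,dy'>y-1$ for every $y>0$; this already shows $\beta^*(\cd,\tau)$ cannot be uniformly small on a bounded interval and so takes over the role of the dichotomy ``$\|X^*_\tau\|_\infty\le2$ or $\beta^*(2,\tau)\ge1/2$'' of Proposition 5.1. Third, an almost-monotonicity bound of the form (\ref{T4}) still holds for $\tau$ large: from (\ref{L4}), for $x>x'$ one has
\[
\beta(x,t)/\beta(x',t)=\frac{\beta_0(F(x,t))}{\beta_0(F(x',t))}\,\exp\Big[-\int_0^t\big(g(x(s),s)-g(x'(s),s)\big)\,ds\Big],
\]
where the first factor $\ra1$ by the propagation of $\beta_*$, and since $\|X\|_\infty=\infty$ every characteristic stays large for $s$ away from $t$, where $g(x,s)-g(x',s)=O(x'^{-2/3})$ is controlled, so the exponential is $\ge1-\del$. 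With these three facts the characteristic–contraction estimates (\ref{Q4}), (\ref{R4}) and the resulting bound $g^*(y_1(\tau'),\tau')\le A\,e^{-\del(\tau-\tau')}$ for $\tau'\le\tau-1$ (with $y_1(\tau)=0$) go through as in Proposition 5.1, so that $\beta^*(0,\tau)\ge\kappa\eta$ on any interval where $\beta^*(0,\cd)\le\eta$, whence the uniform lower bound.

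The step I expect to be the main obstacle is exactly this control of $\int_0^\tau g^*(y_1(\tau'),\tau')\,d\tau'$ along the characteristic reaching the origin — i.e. the almost-monotonicity and characteristic estimates in the case $\beta_*>1$, now that $g$ need not be decreasing. The delicate point is that for large $\tau$ the point $y_1(\tau')$ sits for most of $[0,\tau]$ near a fixed point of the frozen flow, and one must rule out that $g^*$ has a positive lower bound there — otherwise $\beta^*(0,\tau)$ would decay to $0$. This is what the heavy tail provides: with $\|X^*_\tau\|_\infty=\infty$, enough mass of $w^*(\cd,\tau)$ sits near that fixed point that the weighted average of $y'^{-2/3}$ in (\ref{N4}) stays close to its value at the point, keeping $g^*$ small; turning this into a quantitative estimate, using $\int_0^y\beta^*(y',\tau)\,dy'>y-1$, is the crux.
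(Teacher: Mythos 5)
Your reduction to a uniform positive lower bound on $\beta^*(0,\tau)$ via (\ref{R1}) is the right frame, and the easy sub-case where $\beta_0\le 1$ near the end of the support does reduce to Proposition 5.1 (though even there your compact-support, $\beta_*=1$ argument only rules out $\beta_0>1$ \emph{identically} near $\|X\|_\infty$, not excursions above $1$ along a sequence, so the hypothesis of Proposition 5.1 need not hold). The genuine gap is in the main case $\beta_*\ge 1$, which is the whole content of the proposition. Your two substitutes for monotonicity are not established. First, the "propagation of the endpoint limit" rests on $F(0,t)\ra\|X\|_\infty$, i.e. $\La(t)\ra\infty$; that is essentially the qualitative coarsening statement being proved and cannot be taken as input. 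Second, the almost-monotonicity analogue of (\ref{T4}): once $\sup\beta(\cd,t)>1$, Lemma 5.1 is unavailable, so $g(\cd,t)$ need not be decreasing and the sign of $g(x(s),s)-g(x'(s),s)$ is uncontrolled; the contraction (\ref{Q4})--(\ref{R4}) holds only for characteristics confined to a bounded region ($0\le y_1\le y_2\le 2$), whereas with $\|X\|_\infty=\infty$ you need the comparison for arbitrarily large $y$, where backward characteristics of (\ref{B4}) separate rather than contract; and you need the difference integrated over $[0,t]$ to be bounded uniformly as $t\ra\infty$, which your $O(x'^{-2/3})$ heuristic does not give. As a result the crucial step -- bounding $\beta^*(y_1(\tau_0),\tau_0)$ from below by a quantity comparable to $\eta=\beta^*(0,\tau_0)$, which in Proposition 5.1 came from monotonicity -- is missing, and you acknowledge as much in your last paragraph.

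The paper closes exactly this hole by a different mechanism that never propagates the endpoint limit under the flow. It tracks the dyadic level sets $y_N(\tau)$ of $w^*$ (defined by $w^*(y_N(0),0)=2^{-N}$ and transported by (\ref{B4}), using (\ref{V4})), and uses the hypothesis $\lim\beta_0=\beta_1$ only through a statement about the \emph{initial data}: the interval-length asymptotics (\ref{AD4}), $|I_N(0)|/|I_{N+1}(0)|\ra 2^{1/\beta_1-1}$, proved via (\ref{AL4})--(\ref{CT4}). The key dynamical fact is then the exact monotonicity in $\tau$ of $|I_N(\tau)|/|I_{N+1}(\tau)|$ from (\ref{AE4}), which propagates (\ref{AF4}) to all times; on top of this the quantities $\beta_N(\tau)$ with the bootstrap (\ref{AH4}), and the proof that $g^*(y,\tau)$ is controlled locally by the level-set interval length ((\ref{AI4}), (\ref{AK4}) when $\beta_1<2$, and (\ref{AP4})--(\ref{AR4}) with (\ref{CS4})--(\ref{CT4}) in general), yield only the weaker estimate $\beta^*(0,\tau)\ge\kappa\eta^{1+\al}$ on intervals where $\beta^*(0,\cd)\le\eta$ -- weaker than the $\kappa\eta$ of Proposition 5.1, but still enough for (\ref{O1}). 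None of this machinery, or a workable replacement for it, appears in your proposal, so the argument as written does not close.
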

\begin{proof}  We consider $w^*(y,\tau)$ which satisfies (\ref{U4}).  Now $w^*(y,0)$ is decreasing and $w^*(0,0) = 1$.  We can therefore define points $y_N(0), N = 0,1,2,....$ with $w^*(y_N(0), 0) = 2^{-N}$.  Let $y_N(\tau'), \tau ' \ge 0$, be the solution of (\ref{B4}) with the specified initial condition $y_N(0)$.  Then from (\ref{V4}) it follows that $w^*(y_N(\tau), \tau) / w^*(y_{N+1}(\tau), \tau) = 2$.  For $N=0,1,2,...$ let $I_N(\tau)$ be the interval $I_N(\tau) = \{ y: y_N(\tau) \le y \le y_{N+1}(\tau)\}$.  If $|I_N(\tau)|$ denotes the length of the interval then one sees that
\be \label{AE4}
| I_N(\tau)| = |I_N(0)| \exp \left\{ -\int^\tau_0 \beta^*(0,\tau')d\tau' + \int^\tau_0 d\tau' \ \ga(\tau')^{1/3} \int^1_0 \frac{d\la}{3[\la y_N(\tau')+ (1-\la)y_{N+1}(\tau')]^{2/3}} \right\}.
\ee
It follows that $|I_N(\tau)| / |I_{N+1}(\tau)|$ is an increasing function of $\tau$.  Suppose now that $w^*(\cdot,0)$ is associated with the random variable $Y$ and that
$\lim\{\beta^*(y,0): y\ra\|Y\|_\infty\} = \beta_1>0$.  Then
\be \label{AD4}
\lim_{N\ra\infty} |I_N(0)| \; \Big/ \; |I_{N+1}(0)| = 2^{1/\beta_1-1}.
\ee

To see that (\ref{AD4}) holds  observe that 
$w^*(y,\tau)$ satisfies the equation,
\be \label{AL4}
\frac {\pa w^*(y,\tau)}{\pa y} \Big/ w^*(y,\tau) = -\beta^*(y,\tau) \pa q^*(y,\tau)/\pa y.
\ee
For $\tau\ge 0, y>0$, let $I_y(\tau)$ be the interval  $I_y(\tau) = \{y' : w^*(y,\tau)/2 < w^*(y',\tau) < w^*(y,\tau)\}$.
From (\ref{AL4}) it follows  that $|I_y(\tau)|$ satisfies the identity,
\be \label{AM4}
\log 2 = \int^{y+|I_y(\tau)|}_y \ \frac{\beta^*(y',\tau)}{A(y,\tau) + \int^{y'}_y \; [\beta^*(y'',\tau) - 1]\; dy''}  \ dy',
\ee
where $A(y,\tau) = 1/\pa q^*(y,\tau)/\pa y>0$.  Let us assume for the moment that $\inf\{\beta^*(y',\tau):y'>y\}>1$. Then from (\ref{AM4}) we see that there exists a $\bar\beta(y,\tau)$ satisfying $\inf\{\beta^*(y',\tau):y'\in I_y(\tau)\}\le \bar\beta(y,\tau)\le\sup\{\beta^*(y',\tau):y'\in I_y(\tau)\}$ such that
\be \label{CS4}
\int^{y+|I_y(\tau)|}_y \; [\beta^*(y',\tau) - 1]\; dy'=A(y,\tau)\left[ 2^{1-1/\bar\beta(y,\tau)}-1\right].
\ee
Setting $z=y+|I_y(\tau)|$  and observing that $A(z,\tau)-A(y,\tau)$ is equal to the LHS of (\ref{CS4}) we conclude that
\be \label{CT4}
\int^{z+|I_z(\tau)|}_{z} \; [\beta^*(z',\tau) - 1]\; dz'=A(y,\tau) 2^{1-1/\bar\beta(y,\tau)}\left[2^{1-1/\bar\beta(z,\tau)}-1\right].
\ee
The identity (\ref{AD4}) for $\beta_1>1$ follows upon taking the ratio of (\ref{CT4}) to (\ref{CS4}) with $\tau=0, y=y_N(0)$, and letting $N\ra\infty$. We similarly see that (\ref{AD4}) holds for $\beta_1<1$. To see that it holds for $\beta_1=1$ one observes from (\ref{AM4}) that $|I_y(\tau)|\sim A(y,\tau)\log 2$ if the function $\beta^*(\cdot,\tau)$ is close to $1$ in the interval $I_y(\tau)$.

In view of (\ref{AD4}) we may  assume that
\be \label{AF4}
 |I_N(\tau)| \; \Big/ \; |I_{N+1}(\tau)| \ge 1/2, \ \ \tau \ge 0,
\ee
provided $y_N(\tau ') > 0, \ 0 \le \tau' \le \tau$.  We define $\beta_N(\tau)$ for all $\tau$ which have the property that $y_N(\tau ') > 0, \ 0 \le \tau' \le \tau$, by the formula
\[	\beta_N(\tau) = \exp \left[ - \int^\tau_0 \ \frac{|I_N(\tau')|}{y_{N+1}(\tau ')^{5/3}} \ d\tau ' \right] .   \]
It is evident from (\ref{P2}) that $y_{N+1}(\tau ) \ge c >0, \ 0 \le \tau' \le \tau$, for some constant $c$.  Hence $\beta_N(\tau)$ is a positive decreasing function of $\tau$.  From (\ref{AF4}) we have that
\be \label{BK4}
	\beta_N(\tau) \le \exp \left[ -\frac 1 2 \; \int^\tau_0 \ \frac{|I_{N+1}(\tau')|}{y_{N+2}(\tau ')^{5/3}} \ d\tau ' \right] = \beta_{N+1}(\tau)^{1/2}.   
\ee
Observe also from (\ref{AE4}) that 
\be \label{AG4}
|I_N(\tau)| \; \big/ \; |I_{N+1}(\tau )| \ge C \big/ \beta_N(\tau)^\al,	
\ee
for constants $C,\al$ satisfying $0 < C,\al < 1$, depending only on $\beta_\infty$.  Evidently $\beta_N(\tau) \ge (C/2)^{1/\al}$ for all $\tau\ge 0$, or there is a $\tau^* > 0$ such that $\beta_N(\tau^*)^\al = C/2$.  In the latter case if $\tau>\tau^*$ we have that
\begin{eqnarray*}
\beta_N(\tau) &\le& \beta_N(\tau^*) \exp \left[ - 2 \; \int^\tau_{\tau^*} \ \frac{|I_{N+1}(\tau')|}{y_{N+2}(\tau ')^{5/3}} \ d\tau ' \right] \\
& =&\beta_N(\tau^*) \beta_{N+1}(\tau)^2 \; \big/ \beta_{N+1}(\tau^*)^2 \le \beta_{N+1}(\tau)^2\big/\beta_N(\tau^*)^3 = (2/C)^{3/\al} \beta_{N+1}(\tau)^2.
\end{eqnarray*}
It follows that if $\beta_{N+1}(\tau) < (C/2)^{3/\al}$ then $\beta_N(\tau) < \beta_{N+1}(\tau)$.  Let us suppose now that for some $\tau\ge 0$ one has $\beta_{N}(\tau) \ge (C/2)^{3/\al}$. We shall show that one has in this case also $\beta_{N+1}(\tau) \ge (C/2)^{3/\al}$. To see this observe that we may assume $\beta_{N}(\tau) < (C/2)^{1/\al}$ since otherwise (\ref{BK4}) implies that $\beta_{N+1}(\tau) > (C/2)^{3/\al}$. Hence there exists $\tau*\le \tau$ with $\beta_N(\tau^*)^\al = C/2$. Since $\beta_{N+1}(\tau) < (C/2)^{3/\al}$ implies that $\beta_N(\tau) < \beta_{N+1}(\tau)$ which is a contradiction, we conclude again that
$\beta_{N+1}(\tau) \ge (C/2)^{3/\al}$.  More generally we see that there exists $N(\tau)\le \infty$ such that $\beta_N(\tau)$ satisfies
\be \label{AH4}
\beta_N(\tau) \ge (C/2)^{3/\al} \ \ \ {\rm if} \ \ \ N \ge N(\tau),	
\ee
$\beta_N(\tau)$ monotonic increasing function of $N$ if $N < N(\tau)$.

We show that the function $g^*(y,\tau)$ of (\ref{N4}) depends locally on $w^*(\cd,\tau)$ near $y$.  Since in (\ref{AF4}) we may replace the RHS by something strictly larger than $1/2$ it follows that 
\be \label{AJ4}
w^*(y,\tau) |I_y(\tau)|/2 < \int^\infty_y w^*(y',\tau)dy' < (1+C) w^*(y,\tau)|I_y(\tau)|
\ee
for some positive constant $C$.  We may bound from below the numerator of the RHS of (\ref{N4}) by
\[  \int^\infty_y \left[ \frac 1{y^{2/3}} - \frac 1{y'^{2/3}} \right] w^*(y',\tau)dy' \ge w^*(y,\tau) |I_y(\tau)|^2 \; \big/ \; 6\left[ y+|I_y(\tau)|\right]^{5/3}.   \] 
From these last two inequalities we obtain a lower bound on $g^*(y,\tau)$,
\be \label{AI4}
g^*(y,\tau) \ge C_1  |I_y(\tau)|\; \big/ \; \left[ y+|I_y(\tau)|\right]^{5/3} ,
\ee
for a positive constant $C_1$.  We may also obtain an upper bound on $g^*(y,\tau)$ of the same form as in (\ref{AI4}) provided we assume the constant $\beta_1$ in (\ref{AD4}) satisfies $\beta_1 < 2$.  For $j=0,1,2....,$ let $y_j \ge y$ be defined by $w(y_j, \tau) = w(y,\tau)/2^j$.  Then one has that
\[   \int^\infty_y \left[ \frac 1{y^{2/3}} - \frac 1{y'^{2/3}} \right] w^*(y',\tau)dy' \le \frac{2w^*(y,\tau)}{3y^{5/3}} \sum^\infty_{k=0} \frac{1}{2^k} \left[ \sum^k_{j=0} |I_{y_j}(\tau)| \right] |I_{y_k}(\tau)|.
\]
Hence if $\beta_1 < 2$ it follows from the fact that $|I_{y_j}(\tau)| \big/ |I_{y_{j+1}}(\tau)|>2^{1/\beta_1-1}>1/\sqrt{2}$ that
\[	\int^\infty_y \left[ \frac 1{y^{2/3}} - \frac 1{y'^{2/3}}\right] w^*(y',\tau)dy' \le Cw^*(y,\tau) |I_y(\tau)|^2 \big/ y^{5/3}.  \]
We therefore have from (\ref{AJ4}) and the previous inequality the following upper bound on $g^*(y,\tau)$,
\be \label{AK4}
g^*(y,\tau) \le C_2 |I_y(\tau)| \big/  y^{5/3} \ \ \ {\rm if} \ \ \ \beta_1 < 2,  
\ee
where $C_2$ is a constant.

Suppose now as in Proposition 5.1 that $0 \le \tau_0 < \tau_1, \ \beta^*(0,\tau_0) = \eta > 0$ and 
$\beta^*(0,\tau) \le \eta$ for $\tau_0 \le \tau \le \tau_1$.  We will show that there exists $\eta_0 > 0$ such that if $\eta \le \eta_0$ then $\beta^*(0,\tau) \ge \kappa \eta^{1+\al}$ for $\tau_0 \le \tau \le \tau_1$, where $\al, \kappa$ are positive constants.   To see this let us define $N_{\min}(\tau)$ as the minimum $N$ such that $y_N(\tau) \ge 0$, whence $\beta_N(\tau)$, $N \ge N_{\min}(\tau)$, are well defined.  Since $h^*(0,\tau) = w^*(0,\tau) = 1$ it follows that $w^*(y,\tau) \le w^*(0,\tau)/y, \ y > 0$, and hence that $y_N(\tau) \le 2$, $y_{N+1}(\tau) \le 4$, for $N = N_{\min}(\tau)$.  In view of (\ref{AH4}), (\ref{AI4}) and the inequality $\beta^*(0,\tau_0) \ge \eta$ we see that $\beta_N(\tau_0) \ge \kappa_1 \eta^{1+\al_1}$ for all $N \ge N_{\min}(\tau_0)$, where $\kappa_1, \al_1$ are positive constants.  Therefore from (\ref{R4}) it follows that $\beta_N(\tau) \ge \kappa_2 \eta^{1+\al_1}$ for some constant $\kappa_2 > 0$ provided $\tau_0 \le \tau \le \tau_1$ and $N = N_{\min}(\tau)$. Hence we may conclude from (\ref{AH4}) that $\beta_N(\tau) \ge \kappa_2 \eta^{1+\al_1}$ for all $N \ge N_{\min}(\tau)$.

If $\beta_1 < 2$ we can see from (\ref{AK4}) that $\beta^*(0,\tau) \ge \kappa\eta^{1+\al}$ for $\tau_0 \le \tau \le \tau_1$.  To see this observe that wlog we may assume  $y_N(\tau) = 0$ when $N = N_{\min}(\tau)$.  From (\ref{R4}) and (\ref{AK4}) it follows that $\beta^*(0,\tau) \ge C\beta^*(y_N(\tau_0), \tau_0)$ for a positive constant $C$ and $N = N_{\min}(\tau)$.  We may assume wlog that 
$y_N(\tau') \ge 1/[2 \inf \ga(\cd)]$ for 
$\tau' \le \tau_0, \ N = N_{\min}(\tau)$.  If also $y_N(\tau') \le 2$ then $|I_N(\tau')| \le 4$, whence $g^*(y_N(\tau'), \tau') \le C_3 |I_N(\tau')| / y_{N+1}(\tau')^{5/3}$ for a constant $C_3$.  Alternatively if $y_N(\tau') \ge 2$ then \\ $w^*(y_N(\tau'), \tau') \le 1/2$, whence it follows that 
$|I_N(\tau')| \le 2y_N(\tau')$.  Thus (\ref{AK4}) implies that 
$g^*(y_N(\tau'), \tau') \le C_4 |I_N(\tau')| \Big/ y_{N+1}(\tau')^{5/3}$ for a constant $C_4$.  We conclude that $\beta^*(y_N(\tau_0), \tau_0) \ge \beta_N(\tau_0)^{\al_2}$ for some $\al_2 > 0$.  The result now follows from the lower bound on $\beta_N(\tau_0)$ already established.

We may also make an argument which does not require the assumption $\beta_1 < 2$.  
First observe from (\ref{AE4}) that if 
$w^*(y_N(\tau),\tau) \le 1/2$ there are constants $C_1, \al_1 > 0$ such that
\be \label{AR4}
|I_N(\tau)| \Big/ |I_{N+1} (\tau)| \le C_1 \Big/ \beta_N(\tau)^{\al_1}.
\ee
Second, we see just in the same way as we obtained the inequality  (\ref{AI4}), that if 
$w^*(y_N(\tau),\tau) \le 1/2$, there is a constant $\al_2 > 0$ such that
\be \label{AP4}
\beta^*(y,\tau) \le \beta^* \left( y_N(\tau), \tau\right) \Big/ \beta_N(\tau)^{\al_2}, \ \ y \in I_N(\tau).
\ee
Suppose now $\tau_0 \le \tau \le \tau_1$ and we assume as before that $y_N(\tau) = 0$ when $N = N_{\min}(\tau)$.  Then it follows from (\ref{AP4}) that
\be \label{AQ4}
\beta^*(y,\tau) \le \beta^*(0,\tau) \Big/ \kappa_3\eta^{1+\al_3}, \ \ y \in I_N(\tau) \cup I_{N+1}(\tau),
\ee
for positive constants $\alpha_3,\kappa_3$, where $N = N_{\min}(\tau) + 1$. Now let us assume that $\sup\{\beta^*(y,\tau) \ | \  y \in I_N(\tau) \cup I_{N+1}(\tau)\}<1$. Then applying (\ref{CS4}), (\ref{CT4}) with $y=y_N(\tau)$ and using (\ref{AQ4}) we see that  $|I_N(\tau)| \Big/ |I_{N+1} (\tau)|$ is bounded below by something larger than the RHS of (\ref{AR4}) unless  $\beta^*(0,\tau) \ge  \kappa_4\eta^{1+\al_4}$ for some positive constants  $\alpha_4,\kappa_4$. The lower bound on $\beta^*(0,\tau)$ of the form $\kappa \eta^{1+\al}$ in the case  $\beta_1 \ge 2$ follows.
\end{proof}

\newpage
\thanks{ {\bf Acknowledgement:} The author would like to thank Peter Smereka and Barbara Niethammer for many helpful conversations. This research was partially supported by NSF
under grants DMS-0500608 and DMS-0553487.

\end{document}